\newtheorem{theorem}{Theorem}[section]
\newtheorem{proposition}[theorem]{Proposition}
\newtheorem{corollary}[theorem]{Corollary}
\newtheorem{lemma}[theorem]{Lemma}
\theoremstyle{definition}
\newtheorem{definition}[theorem]{Definition}
\newtheorem{remark}[theorem]{Remark}
\newtheorem{example}[theorem]{Example}
\DeclareMathOperator{\ev}{ev}
\DeclareMathOperator{\id}{id}
\DeclareMathOperator{\Hom}{Hom}
\DeclareMathOperator{\Id}{Id}
\DeclareMathOperator{\im}{Im}
\DeclareMathOperator{\Spec}{Spec}
\DeclareMathOperator{\spec}{spec}   
\DeclareMathOperator{\rk}{rk}       
\DeclareMathOperator{\Gr}{Gr}      
\DeclareMathOperator{\Sl}{Sl}
\def\top{\textup{top}}
\def\N{\mathbb{N}}
\def\Z{\mathbb{Z}}
\def\R{\mathbb{R}}
\def\C{\mathbb{C}}
\def\O{\mathcal{O}}
\def\FDS{\mathcal{F}_{D\to S}}              
\def\FDCC{\mathcal{F}_{D\to CC}}            
\def\FCCS{\mathcal{F}_{CC\to S}}
\def\FSCC{\mathcal{F}_{S\to CC}}
\def\ot{\otimes}
\def\vphi{\varphi}
\def\F{\mathbb{F}}
\def\lto{\longrightarrow}
\def\lmto{\longmapsto}
\def\Fun{{\F_1}}                             
\def\Gm{\mathbb{G}_m}
\def\A{\mathbb{A}}
\def\otz{\ot_{\Z}}
\def\otf{\ot_{\Fun}}
\def\du{\amalg} 
\def\bdu{\coprod} 
\def\decomp{\overset{\circ}{\coprod}}
\def\Sets{\textrm{$\mathcal{S}$ets}}
\newcommand{\abs}[1]{\left|#1\right|} 
\newcommand{\card}[1]{\#\left(#1\right)} 
\newcommand{\dtext}[1]{\emph{\textbf{#1}}} 
\title{Torified varieties and their geometries over $\Fun$}
\author{Javier L\'{o}pez Pe\~na}
\address{Mathematics Research Centre\\ 
Queen Mary University of London\\
Mile End Road, London E1 4NS, United Kingdom}
\email{jlopez@maths.qmul.ac.uk \textrm{(J. L\'opez Pe\~na)}}
\thanks{During the ellaboration of this paper, J. L\'opez Pe\~na was supported initially by the Max-Planck-Institut f\"ur Mathematik in Bonn, and in the final stages by the EU Marie-Curie fellowship PIEF-GA-2008-221519 at Queen Mary University of London.}
\author{Oliver Lorscheid}
\address{Max-Planck Institut f\"ur Mathematik\\
Vivatsga\ss{}e, 7. D-53111, Bonn, Germany}
\email{oliver@mpim-bonn.mpg.de \textrm{(O. Lorscheid)}}
\thanks{O. Lorscheid was supported by the Max-Planck-Institut f\"ur Mathematik in Bonn.}
\begin{document}

\begin{abstract}
 This paper invents the notion of torified varieties: 
 A torification of a scheme is a decomposition of the scheme into split tori.
 A torified variety is a reduced scheme of finite type over $\Z$ endowed with a torification.
 Toric varieties, split Chevalley schemes and flag varieties are examples of this type of scheme.
 Given a torified variety whose torification is compatible with an affine open covering, 
 we construct a gadget in the sense of Connes-Consani and an object in the sense of Soul\'e
 and show that both are varieties over $\F_1$ in the corresponding notion.
 Since toric varieties and split Chevalley schemes satisfy the compatibility condition, 
 we shed new light on all examples of varieties over $\F_1$ in the literature so far.
 Furthermore, we compare Connes-Consani's geometry, Soul\'e's geometry and Deitmar's geometry,
 and we discuss to what extent Chevalley groups can be realized as group objects over $\F_1$ in the given categories.
\end{abstract}

\maketitle

\tableofcontents


\section*{Introduction}

A study seminar on $\Fun$,
which was held at the Max Planck Institute for Mathematics in Bonn in fall 2008, led to several discussions about the possibilities and limitations
of the various notions of geometries over $\F_1$ that were produced in recent years.
This paper subsumes the most relevant thoughts of those discussions. 
It was possible to establish a good part of varieties over $\F_1$ in the notion of Soul\'e, which was further developed by Connes and Consani.
While the philosopher's stone regarding $\F_1$-geometries is not found yet,
there will be many examples and remarks disclosing problems of the recent theories and hinting at directions one might try to go.

The idea of constructing objects over a ``field with one element'' goes back to Tits in \cite{Tits1957}, where the question about the interpretation of
Weyl groups as ``Chevalley groups over $\Fun$'' is posed. In recent years, a number of papers (\cite{Manin1995}, \cite{Soule2004}, \cite{Deitmar2005},
\cite{Deitmar2006}, \cite{Deitmar2007}, \cite{Toen2008}, \cite{Connes2008a}, \cite{Connes2008}, \cite{Manin2008}, \cite{Marcolli2009}, \cite{Connes2009}, \ldots) on the topic have appeared,
dealing mostly with the problem of defining a suitable notion of algebraic geometry over such an elusive object. 
Several non equivalent approaches have been tried, 
for instance Durov (cf. \cite{Durov2007}) and Shai-Haran (cf. \cite{ShaiHaran2007}) enlarged the category of schemes to obtain the spectrum of $\F_1$
in place of $\Spec \Z$ as final object,
Deitmar mimicked scheme theory using monoids (i.e.\ commutative semi-groups with $1$) in the place of commutative rings (cf. \cite{Deitmar2005,
Deitmar2006, Deitmar2007}), whereas To\"en and Vaqui\'e (cf. \cite{Toen2008}) described a categorical approach in terms of functors on monoids. There is also a more recent approach by Connes and Consani (cf. \cite{Connes2009}) combining these viewpoints.

Soul\'e proposed in \cite{Soule2004} that varieties over $\Fun$ should be functors that admit a base extension to $\Z$. He gave a precise realization by considering functors from the category of flat rings of finite type over $\Z$ to the category of finite sets
together with an evaluation, i.e.\ a natural transformation from this functor to the functor of homomorphisms from a fixed complex algebra to
the complexification of the given ring. Soul\'e showed that smooth toric varieties admit a model over $\F_1$ in his notion.
This approach was further developed by Connes and Consani in \cite{Connes2008} by exchanging flat finite rings by finite abelian groups
and doing some further refinements. They mention that Soul\'e's method of establishing smooth toric varieties over $\F_1$ still works
and they demonstrate this in the case of the multiplicative group scheme, affine space and projective space. 
However, their focus is on Chevalley schemes. 
To be precise, Connes and Consani establish split Chevalley schemes as varieties over $\F_{1^2}$.

In the present work, we generalize methods to show that all reduced schemes of finite type over $\Z$ 
that admit a decomposition by algebraic tori, dubbed \dtext{torified varieties}, 
have a model over $\F_1$ in both Soul\'e's and Connes-Consani's notion--provided they admit an open affine cover compatible with the decomposition.
This class of schemes includes toric varieties and split Chevalley schemes, which covers all examples in the literature so far.
Grassmannians and flag varieties are torified varieties as well,
but in general, they lack the extra condition of having a compatible atlas, which is necessary to define the base extension to $\Z$
in the given notions. 
However, the class of torified varieties could be a leading example for the development of new notions of geometries over $\F_1$.

Furthermore, we connect Deitmar's viewpoint (\cite{Deitmar2005}) with the previous. 
Namely, we construct an embedding of Deitmar's category of schemes over $\F_1$
that base extend to integral schemes of finite type over $\Z$ into the category of varieties over $\F_1$.
We also compare the two notions of varieties over $\F_1$, which seem to produce similar theories
except for one remarkable difference: Chevalley groups are more likely to be a variety over $\F_1$ after Soul\'e  
than they are after Connes-Consani (see Remark \ref{remark_S_result_on_group}). 
We show, however, that $\Sl(2)$ cannot be established as a group object in either notion.

The paper is organized as follows. In section \ref{section:torified}, we introduce the notion of \dtext{torification} of a scheme $X$ as a finite
family of immersions $\{\vphi_i:T_i \hookrightarrow X\}$ such that every $T_i$ is a split torus over $\Z$ 
and every geometric point of $X$ factorizes through
exactly one of such immersions. We consider schemes with torification together with morphisms that respect the torifications, 
called \dtext{torified morphisms}. We describe the zeta function of a torified variety over $\F_1$ and provide a list of examples of torified varieties.

In section \ref{section_CC-gadgets}, we recall the notion of Connes-Consani's gadgets and varieties over $\Fun$, and show how to associate a gadget
$\mathcal{L}(X,T)$ to every torified variety $X$ endowed with a torification $T$. We prove in Theorem \ref{thm_ll} that this gadget is actually a variety
providing an $\Fun$-model for $X$ whenever the torification is compatible with an affine open cover.
In particular, this result extends the one by Connes and Consani by realizing split Chevalley schemes over $\Fun$ (and not only over $\mathbb{F}_{1^2}$).

In section \ref{section:soule}, we recall Soul\'e's approach to $\Fun$-geometry. 
We show in Theorem \ref{thm_javier} that the previous result (Theorem \ref{thm_ll}), mutatis mutandis, also holds in this case.

In section \ref{section:deitmar}, we recall the notion of Deitmar's schemes over $\Fun$,
and refine the equivalence between the category of toric varieties and the category of schemes over $\F_1$ that base extend to 
connected integral schemes of finite type over $\Z$.

In section \ref{section:geometries}, we compare the three aforementioned notions of geometries over the field with one element by establishing 
functors between them. Deitmar's theory can be embedded into both the theory of Soul\'e and the theory of Connes and Consani.
There are further several ways to go from Connes-Consani's world to Soul\'e's world and back, 
but it is not clear if they compare one-to-one as we discuss in section \ref{subsection:CC_to_S}.
We summarize these results in the diagram of Theorem \ref{thm_large_diagram}.

We conclude the paper with remarks showing the boundaries of Soul\'e's and Connes-Consani's geometries, mainly the impossibility of obtaining the group
operation of Chevalley schemes as a morphism over $\Fun$. 
Further we recollect some thoughts that might eventually lead to new approaches to $\F_1$-geometries in future works.

\textbf{Acknowledgments:}
The authors thank all people that participated in the $\F_1$-study seminar, in particular Peter Arndt, Pierre-Emmanuel Chaput, Bram Mesland and Fr\'ed\'eric Paugam for giving interesting lectures at the seminar and participating on stimulating discussions. The authors thank Bas Edixhoven for his help on improving some proofs and Markus Reineke for providing an interesting counter example. The authors thank the Max-Planck Institut f\"ur Mathematik in Bonn for support and hospitality and for providing excellent working conditions.


\section{Torified varieties}\label{section:torified}

\subsection{The category of torified schemes}

 In this section, we will establish the definition of torified schemes and show some basic properties. If $X$ and $S$ are schemes over $\Z$, we will denote by $X(S):=\Hom(S,X)$ the set of $S$--points of $X$. The underlying topological space of $X$ will be denoted by $X^{top}$, its structure sheaf by $\O_X$, and the stalk at a point $x\in X^{top}$ by $\O_{X,x}$. Following \cite[\S I-4.2.1]{EGAI} by an immersion of schemes $f:Y\to X$ we will mean a morphism of schemes that factorizes as $Y\overset{g}\to Z \hookrightarrow X$, where $Z$ is a locally closed subscheme of $X$ and $g$ is an isomorphism.
 
 \begin{definition}
  Given a scheme $X$, a \dtext{decomposition} of $X$ consists of a family $\{Y_i \}_{i\in I}$ of locally closed (nonempty) subschemes $Y_i$ of $X$ such that for every algebraically closed field $\Omega$ one has
  \[
   \bdu_{i\in I} Y_i(\Omega) =  X(\Omega),
  \]
 or equivalently as a family of locally closed subschemes such that one has the equality
  \[
   \bdu_{i\in I}\abs{Y_i} =  \abs{X}.
  \]
 \end{definition}
 If this is the case, we will write for short $X=\decomp Y_i$. This property implies the following result:
 
 \begin{lemma}\label{lemma:decomp}
  Let $X=\decomp Y_i$ be a decomposition of the scheme $X$, and let $S$ be a scheme over $\Z$; then the map $\bdu Y_i(S) \to X(S)$ is injective. Moreover, if $S=\Spec k$ for a field $k$, it is a bijection.
 \end{lemma}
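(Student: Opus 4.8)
\emph{Injectivity.} I would handle the two claims separately, beginning with injectivity of $\bdu Y_i(S)\to X(S)$. Suppose $f\in Y_i(S)$ and $g\in Y_j(S)$ have the same image in $X(S)$, i.e.\ $\vphi_i\circ f=\vphi_j\circ g=:h$. First deduce $i=j$. (One should take $S\neq\emptyset$ here: for $S=\emptyset$ the source has $\#I$ elements and the target just one.) Pick a point $s\in S$ and a geometric point $p\colon\Spec\Omega\to S$ over $s$, with $\Omega$ algebraically closed; then $\vphi_i\circ(f\circ p)=h\circ p=\vphi_j\circ(g\circ p)$ exhibits one and the same geometric point of $X$ as factoring through both $\vphi_i$ and $\vphi_j$, so the uniqueness of the index in the definition of a decomposition yields $i=j$. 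Once the indices agree, one only needs that an immersion is a monomorphism in the category of schemes---being the composite of an open immersion with a closed immersion, each a monomorphism---so $\vphi_i\circ f=\vphi_i\circ g$ forces $f=g$.

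\emph{Bijectivity for $S=\Spec k$.} Injectivity follows from the previous paragraph (applied to the nonempty scheme $\Spec k$), so only surjectivity remains. Given $h\in X(\Spec k)$, fix an algebraic closure $\bar k$ of $k$ and let $\bar h\colon\Spec\bar k\to\Spec k\xrightarrow{\,h\,}X$ be the induced geometric point. Since $\bar k$ is algebraically closed, the decomposition property produces a unique index $i$ and a unique $\tilde f\in Y_i(\Spec\bar k)$ with $\vphi_i\circ\tilde f=\bar h$; write $Z_i\subseteq X$ for the locally closed subscheme identified with $Y_i$ through the immersion $\vphi_i$. The task is to descend $\tilde f$ to a $k$-point of $Y_i$, equivalently to show that $h$ itself factors through $Z_i$. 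Both $\Spec k$ and $\Spec\bar k$ have the same one-point image $x\in X^{\top}$, and $\bar h$ factors through $Z_i$, so $x\in Z_i$. I would then shrink $X$ first to an open subscheme in which $Z_i$ is closed and then to an affine open $\Spec A\ni x$, where $Z_i$ becomes $V(\mathfrak a)$ for some ideal $\mathfrak a\subseteq A$ and where $h$ and $\bar h$ still factor through $\Spec A$ (their images being $\{x\}$), corresponding to ring maps $A\to k$ and $A\to k\hookrightarrow\bar k$. Then $h$ factors through $Z_i$ iff $A\to k$ kills $\mathfrak a$, and $\bar h$---which does factor through $Z_i$---iff $A\to\bar k$ kills $\mathfrak a$; injectivity of $k\hookrightarrow\bar k$ makes these two conditions equivalent, so $h$ factors through $\vphi_i$ and gives the desired preimage in $Y_i(\Spec k)$.

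\emph{Main obstacle.} The only non-formal step is this last descent, namely that factoring through the locally closed immersion $\vphi_i$ is unaffected by the extension $k\hookrightarrow\bar k$: the open-immersion part is purely topological (both points lie over $x$), and the closed-immersion part reduces, on an affine chart, to the vanishing of an ideal under a ring homomorphism, where injectivity of $k\hookrightarrow\bar k$ closes it. I expect the effort to be mostly bookkeeping---making sure the passage to an affine chart turns ``locally closed in $X$'' into ``closed in $\Spec A$'' compatibly for both $\Spec k$ and $\Spec\bar k$---together with the observation that the injectivity claim implicitly wants $S\neq\emptyset$ (unless $\#I\le1$).
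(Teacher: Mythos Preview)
Your proposal is correct and follows essentially the same approach as the paper's proof. The only differences are packaging: for injectivity you invoke the fact that immersions are monomorphisms as a black box, whereas the paper unpacks this into separate topological and sheaf-level arguments; for surjectivity you pass to an affine chart where $Z_i$ becomes $V(\mathfrak a)$, while the paper works directly with the stalks $\O_{X,x}\twoheadrightarrow\O_{Y_i,y}$ and their kernel $K$---but these are the same argument viewed through different lenses, and your observation about needing $S\neq\emptyset$ matches the paper's as well.
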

 
 \begin{proof}
  Denote by $\tau_i$ the natural immersion of $Y_i$ inside $X$. Assume there are $\alpha\in Y_i(S)$ and $\beta\in Y_j(S)$ mapping to the same element of $X(S)$. Is $S\neq \varnothing$, pick a geometric point $p:\Spec \Omega\to S$ of $S$. One has the commutative diagram
  \[
   \xymatrix@R=.5pc{
     & & Y_i \ar[dr]^{\tau_i} \\
    \Spec \Omega \ar@{..>}[r]^p & S \ar[ur]^{\alpha} \ar[dr]_{\beta}  & & X. \\
    & & Y_j \ar[ur]_{\tau_j} 
   }
  \]
  By the definition of a decomposition, the commutativity of the diagram implies $i=j$. Since $\tau_i=\tau_j$ is an immersion, it follows that $\alpha=\beta$, and so the injectivity of the map $\bdu Y_i(S) \to X(S)$.

  If $k$ is a field and $\alpha:\Spec k\to X$ a morphism, choose an algebraic closure $\Omega$ of $k$. The induced map $\Spec\Omega$ factors uniquely over one $Y_i$. As $Y_i\to X$ is an immersion, $\alpha$ also factors uniquely over $Y_i$.
 \end{proof}

If $X=\decomp_{i\in I} Y_i$ is a decomposition of $X$, we will consider the subset 
 \[
  I^o:=\{i\in I|\ \vphi_i\ \text{is an open immersion}\}.
 \]

	\begin{lemma}\label{lemma:torificationpoints}
		Let $X=\decomp_{i\in I} Y_i$. The following properties hold true:
		\begin{enumerate}
			\item The map $\bdu_{i\in I}Y_i^{top} \lto X^{top}$ is a continuous bijection.
			\item The cardinality of $I^o$ is bounded by the number of irreducible components of $X$. If $Y_i$ is irreducible for every $i\in I^o$, then $I^o$ stays in bijection with the irreducible components of $X$.
		\end{enumerate}
	\end{lemma}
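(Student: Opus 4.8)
The plan is to treat the two statements in order, using the bijectivity of $\bdu Y_i(\Omega) \to X(\Omega)$ on geometric points together with Lemma \ref{lemma:decomp}. For part (1), the map $\bdu_{i\in I} Y_i^{top} \to X^{top}$ is continuous because each $\vphi_i$ is an immersion (hence continuous) and the source carries the disjoint union topology. For injectivity and surjectivity I would argue pointwise: a point $x \in X^{top}$ with residue field $\kappa(x)$ gives a morphism $\Spec \kappa(x) \to X$, and by the surjectivity part of Lemma \ref{lemma:decomp} (applied with the field $k = \kappa(x)$) this lifts to some $Y_i$, giving a point $y \in Y_i^{top}$ mapping to $x$; this gives surjectivity. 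For injectivity, if $y \in Y_i^{top}$ and $y' \in Y_j^{top}$ both map to $x$, I would pass to residue fields (taking a common field extension if necessary, or using that an immersion induces isomorphisms on residue fields of points in its image) and apply the injectivity part of Lemma \ref{lemma:decomp} to conclude $i = j$ and $y = y'$. One has to be slightly careful that $\vphi_i$ being an immersion means the map $Y_i^{top} \to X^{top}$ is a homeomorphism onto a locally closed subset, so in particular injective, which handles the $i = j$ case.

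For part (2), the idea is that the open strata are precisely the generic points' strata. Since each $\vphi_i \colon Y_i \hookrightarrow X$ is an immersion and $Y_i$ is a split torus (in particular irreducible, being $\Spec \Z[t_1^{\pm 1}, \dots, t_n^{\pm 1}]$ for some $n$), the image $\vphi_i(Y_i^{top})$ is an irreducible locally closed subset of $X^{top}$. If $\vphi_i$ is an open immersion, then $\overline{\vphi_i(Y_i^{top})}$ is an irreducible closed subset whose interior is nonempty, hence it is an irreducible component, and distinct $i \in I^o$ give distinct components because the corresponding open subsets are disjoint (by part (1)) and an irreducible component is determined by any nonempty open subset of it. Conversely, given an irreducible component $Z$ of $X$, its generic point $\eta$ lies in some stratum $\vphi_i(Y_i^{top})$; I would then show this $\vphi_i$ is an open immersion. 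For this, note that $\eta$ has an open neighbourhood $U$ in $X$ meeting no other irreducible component (the complement of the union of the other components is such a $U$), so $U$ is irreducible with generic point $\eta$; intersecting the stratification with $U$, every geometric point of $U$ lands in some stratum, and since $U$ is irreducible of dimension equal to $\dim Z$ while each $Y_j$ is irreducible, a dimension/density count forces $\vphi_i(Y_i^{top}) \cap U$ to be dense in $U$, and being locally closed and containing the generic point it must actually be open in $U$, hence open in $X$.

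The main obstacle I expect is pinning down this last point — that the stratum through the generic point of an irreducible component is open — rigorously rather than heuristically. The subtlety is that a locally closed irreducible subset containing the generic point of an irreducible space need not be open (e.g. a non-open dense subset), so one genuinely needs to use that the complement is itself a finite union of locally closed irreducible pieces of strictly smaller dimension (or contained in proper closed subsets), and an induction on dimension or a Noetherian argument to conclude that the top-dimensional stratum exhausts an open set. I would set this up by ordering the strata by dimension of their closures and arguing that the maximal-dimensional ones covering a given component must be open; the finiteness of $I$ (established in the Corollary above) makes this bookkeeping manageable. Everything else is a routine translation between the functor-of-points statement of Lemma \ref{lemma:decomp} and the topological statement being claimed.
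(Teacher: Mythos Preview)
Your treatment of part (1) is correct and matches the paper's approach, which just cites the decomposition property and the fact that every scheme point underlies a geometric point; you have simply spelled out the residue-field argument via Lemma~\ref{lemma:decomp}.

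For part (2) the paper's proof is the single sentence ``Open immersions are in one to one correspondence with the generic points of $X$, so the result follows from (1).'' Your plan is more careful and you rightly sense a difficulty, but your diagnosis is slightly off and the gap is in fact not repairable along the lines you suggest. First, your specific worry is misplaced: in an \emph{irreducible} space a locally closed subset containing the generic point \emph{is} automatically open (write it as $V\cap C$ with $V$ open and $C$ closed; the generic point lying in $C$ forces $C$ to be the whole space). So once you restrict to the irreducible open $U=X\setminus\bigcup_{Z'\neq Z}Z'$, your argument cleanly gives that $\vphi_i(Y_i)\cap U$ is open in $X$. What is missing is the inclusion $\vphi_i(Y_i)\subset U$, i.e.\ that the stratum through the generic point of $Z$ avoids the other components, and neither your dimension count nor a Noetherian induction supplies this. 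Indeed the assertion fails as stated: take $X=\Spec\Z[x,y]/(xy)$ with $Y_1=\Gm\hookrightarrow X$ via $s\mapsto(s+1,0)$ (image $V(y)\cap D(x-1)$, which contains the point $(x,y)$ lying on both components), $Y_2=\Gm^0\hookrightarrow X$ the closed point $V(x-1,y)$, and $Y_3=\Gm\hookrightarrow X$ via $t\mapsto(0,t)$ (image $X\cap D(y)$). This is a torification, $X$ has two irreducible components, yet only $\vphi_3$ is an open immersion, so $\#I^o=1\neq 2$. What the later application in Theorem~\ref{thm_ll} actually uses is only that each component contains a unique stratum dense in it---equivalently, that the strata containing generic points of $X$ biject with the components---and that weaker statement your argument does establish.
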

	\begin{proof}\hfill\\
		\noindent(1): This follows from the universal property of the decomposition, taking into account that every point of $X^{top}$ is the image of some geometric point $\Spec \Omega \to X$.
		
		\noindent(2): If $Y_i\to X$ is an open immersion, then its image contains at least one generic point of $X$. It contains precisely one generic point when $Y_i$ is irreducible. Since the generic points of $X$ characterize the irreducible components of $X$, the lemma follows.
	\end{proof}

\begin{corollary}
	 If $X=\decomp_{i\in I} Y_i$ is a scheme of finite type over $\Z$, then $I$ is a finite set. 
\end{corollary}

\begin{proof}
	The proof is by induction on the dimension $n$ of $X$. If $n=0$, then $X^\top$ is a discrete space consisting of a finite number of points, and the claim of the lemma is immediate. 
 
	If $n>0$, then $X$ has a finite number of irreducible components. By the previous lemma, $I^o$ is a finite set, and the image of $(\coprod_{i\in I^o} Y_i)^\top$ in $X^\top$ is a dense open subset. Thus the image of $\coprod_{i\in (I-I^o)} Y_i$ in $X$ defines a closed subscheme of $X$, which is of dimension smaller than $n$. By the induction hypothesis, $I-I^o$ is finite and therefore $I$ is so.
\end{proof}

	\begin{definition}
		A scheme $X$ is \dtext{torifiable} if it has a decomposition $X=\decomp_{i\in I} T_i$, where for each $i\in I$ we have $T_i$ isomorphic to $\Gm^{d_i}$ (as algebraic groups) for $d_i\in \N$. In this case we will say that $T=\{\vphi_i:T_i\hookrightarrow X\}$ is a \dtext{torification} of $X$, and call the couple $(X,T)$ a \dtext{torified scheme}. A \dtext{torified variety} is a torified scheme that is reduced and of finite type over $\Z$. A torification $X=\decomp_{i\in I} T_i$ is \dtext{affine} if there is an affine open cover $\{ U_j\}$ of $X$ respecting the torification, i.e.\ for each $j$ there is a subset $I_j\subseteq I$ satisfying that $U_j = \decomp_{i\in I_j} T_i$. 	
	\end{definition}
	
	We will denote by $(X,T)$ the scheme $X$ with a fixed torification $T$ when needed, though often we will denote $(X,T)$ simply by $X$ when there is no risk of confusion.

\begin{definition}
	A \dtext{torified morphism} $\Phi:(X,T) \lto (Y,S)$ between torified schemes $X$ and $Y$ with torifications $T=\{T_i \overset{\tau_i}{\hookrightarrow} X\}_{i\in I}$ and $S=\{S_j \overset{\sigma_j}{\hookrightarrow} Y\}_{j\in J}$, respectively, is a triple $\Phi=(\vphi,\tilde{\vphi},\{\vphi_i\}_{i\in I})$ where
	\begin{itemize}
		\item $\vphi:X\to Y$ is a morphism of schemes,
		\item $\tilde{\vphi}:I\to J$ is a set map, and
		\item $\vphi_i:T_{i}\to S_{\tilde{\vphi}(i)}$ are morphisms of algebraic groups such that for all $i\in I$ the diagram
			\[
				\xymatrix{
					X \ar[rr]^{\vphi} & & Y \\
					T_i \ar[u]^{\tau_i} \ar[rr]^{\vphi_i} && S_{\tilde{\vphi}(i)} \ar[u]_{\sigma_{\tilde{\vphi}(i)}}
				}
			\]
		commutes.
	\end{itemize}
\end{definition}

The \emph{category of torified schemes} consists of torified schemes together with torified morphisms. The \emph{category of torified varieties} is defined as the full subcategory of the category of torified schemes. The \emph{category of affinely torified varieties} is defined as the full subcategory of affinely torified varieties. 

The following lemma shows that torified morphisms between affinely torified schemes is determined by its restriction to affine open torified subschemes. We can subsume this fact by saying that a torified morphism between affinely torified schemes is \emph{affinely torified}.

\begin{lemma}\label{lemma: affinely torified morphisms}
 Let $\Phi: (X,T) \to (Y,S)$ be a torified morphism between affinely torified schemes. Then there is an affine open cover $\{U_i\}_{i\in I}$ of $X$ that respects the torification $T\to X$ and an affine open cover $\{V_i\}_{i\in I}$ that respects the torification $S\to Y$ such that $\vphi:X\to Y$ restricts to a morphism $\vphi(i):U_i\to V_i$ for all $i\in I$.
\end{lemma}

\begin{proof}
 Since $X$ and $Y$ are affinely torified, we can choose an affine open cover $\{\tilde U_j\}_{j\in J}$ of $X$ that respects the torification $T\to X$ and an affine open cover $\{\tilde V_k\}_{k\in K}$ of $Y$ that respects the torification $S\to Y$. Define the intersection $U_i=\tilde U_j\times_X \vphi^{-1}(\tilde V_k)$ and $V_i=\tilde V_k$ for every pair $i=(j,k)\in J\times K= I$. Then the following is clear: $\vphi:X\to Y$ restricts to a morphism $\vphi(i):U_i\to V_i$ for all $i\in I$; the collection $\{U_i\}_{i\in I}$ is an open cover of $X$ that respects the torification $T\to X$ and $\{V_i\}_{i\in I}$ is an open affine cover of $Y$ that respects the torification $S\to Y$.

 The lemma is proven if we can show that the subschemes $U_i$ are affine for all $i\in I$. First note that for every $i\in I$, the scheme $U_i$ is quasi-affine since it embeds into $\tilde U_j$ where $i=(j,k)$. Therefore $U_i$ embeds into the affine subscheme $Z_i=\Spec\O_X(U_i)$ of $\tilde U_j$. The morphism $\vphi(i):U_i\to V_i$ extends to a morphism $\vphi(i)':Z_i\to V_i$ since $V_i$ is affine. This means that $Z_i$ is contained in $\vphi^{-1}(\tilde V_k)$. Therefore $Z_i$ is contained in $\tilde U_j\times_X\vphi^{-1}(\tilde V_k)=U_i$, which shows that $U_i=Z_i$ is affine.
\end{proof}

	\begin{lemma}\label{lemma:productoftorified}
		Let $X$, $Y$ be (affinely) torified schemes over $\Z$, then the cartesian product $X\times Y$ is also (affinely) torified.
	\end{lemma}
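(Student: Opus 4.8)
The plan is to prove that the cartesian product of two (affinely) torified schemes is again (affinely) torified by taking the ``product torification'' whose pieces are the products $T_i \times S_j$ of the pieces of the two given torifications.

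\medskip

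\textbf{Step 1: The product of split tori is a split torus.} If $T_i \cong \Gm^{d_i}$ and $S_j \cong \Gm^{e_j}$ as algebraic groups over $\Z$, then $T_i \times S_j \cong \Gm^{d_i+e_j}$ as algebraic groups. This is immediate, and it shows that the candidate pieces have the right shape.

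\medskip

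\textbf{Step 2: The products give a decomposition.} Let $T = \{\tau_i: T_i \hookrightarrow X\}_{i\in I}$ and $S = \{\sigma_j: S_j \hookrightarrow Y\}_{j\in J}$ be the given torifications. Consider the family $\{\tau_i \times \sigma_j : T_i \times S_j \to X\times Y\}_{(i,j)\in I\times J}$. Each $\tau_i\times\sigma_j$ is an immersion, since a product of immersions is an immersion (a composition $T_i\times S_j \hookrightarrow T_i \times Y \hookrightarrow X\times Y$ of base-changes of immersions). For the decomposition property, I would use that a geometric point $p = \Spec\Omega \to X\times Y$ is the same datum as a pair of geometric points $p_X: \Spec\Omega \to X$ and $p_Y: \Spec\Omega\to Y$ (both with the same field $\Omega$), by the universal property of the fiber product over $\Z$. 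By the decomposition property of $T$, the point $p_X$ factors uniquely through a unique $\tau_i$; by that of $S$, the point $p_Y$ factors uniquely through a unique $\sigma_j$; hence $p$ factors uniquely through $\tau_i\times\sigma_j$ for a unique index $(i,j)$. Equivalently, for every algebraically closed field $\Omega$ the map $\bdu_{i,j}(T_i\times S_j)(\Omega) = \bdu_i T_i(\Omega) \times \bdu_j S_j(\Omega) \to X(\Omega)\times Y(\Omega) = (X\times Y)(\Omega)$ is a bijection, being the product of two bijections. This establishes that $X\times Y = \decomp_{(i,j)\in I\times J} (T_i\times S_j)$, so $X\times Y$ is torified.

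\medskip

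\textbf{Step 3: The affine case.} Suppose $\{U_j\}$ is an affine open cover of $X$ respecting $T$, with $U_j = \decomp_{i\in I_j} T_i$, and $\{V_l\}$ is an affine open cover of $Y$ respecting $S$, with $V_l = \decomp_{k\in J_l} S_k$. Then $\{U_j\times V_l\}$ is an open cover of $X\times Y$ by affine schemes (a product of affine schemes is affine), and it respects the product torification: $U_j\times V_l = \decomp_{(i,k)\in I_j\times J_l}(T_i\times S_k)$, which one checks on geometric points exactly as in Step 2 (or by noting that an open subscheme inherits the restricted decomposition, applied to $U_j \times V_l \subseteq X\times Y$). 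Hence $X\times Y$ is affinely torified. The only mild subtlety is that one should check $U_j\times V_l$ is open in $X\times Y$ and that these cover it, both of which are standard facts about fiber products of schemes.

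\medskip

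I do not expect a genuine obstacle here; the main point requiring care is the identification of geometric points of $X\times Y$ with pairs of geometric points of $X$ and $Y$ over a common algebraically closed field $\Omega$, and the compatibility $\bdu_{i,j}(T_i\times S_j)(\Omega) \cong \bdu_i T_i(\Omega)\times\bdu_j S_j(\Omega)$, which is just the distributivity of products over disjoint unions of sets. Once these are in place, everything reduces to the elementary facts that products of immersions are immersions, products of split tori are split tori, and products of affine schemes are affine.
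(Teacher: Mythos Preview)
Your proof is correct and follows exactly the same approach as the paper: the paper also takes the product torification $X\times Y=\decomp_{(i,j)\in I\times J} T_i\times S_j$ and asserts it is an (affine) torification. The paper's proof is a one-line statement of this fact, while you have supplied the routine verifications (products of immersions are immersions, the decomposition property via geometric points, and the affine cover $\{U_j\times V_l\}$) that the paper leaves implicit.
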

	\begin{proof}
		If $X=\decomp_{i\in I} T_i$ and $Y=\decomp_{j\in J} S_j$ are (affine) torifications of $X$ and $Y$, then we have that $X\times Y=		\decomp_{(i,j)\in I\times J} T_i\times S_j$ is an (affine) torification of $X \times Y$.
	\end{proof}

	\begin{lemma}\label{lemma:decompoftorified}
		If $X=\decomp_{i\in I} X_i$ is a decomposition of $X$ into torified schemes $X_i$, then $X$ is also torified.
	\end{lemma}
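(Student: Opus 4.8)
The plan is to compose decompositions. Write $\vphi_i\colon X_i\hookrightarrow X$ for the structure immersions of the given decomposition $X=\decomp_{i\in I} X_i$, and, for each $i\in I$, fix a torification $X_i=\decomp_{j\in J_i} T_{i,j}$ with structure immersions $\psi_{i,j}\colon T_{i,j}\hookrightarrow X_i$ and isomorphisms $T_{i,j}\cong\Gm^{d_{i,j}}$ for suitable $d_{i,j}\in\N$. Setting $L:=\bdu_{i\in I} J_i$, I claim that the family
\[
	\{\, \vphi_i\circ\psi_{i,j}\colon T_{i,j}\hookrightarrow X \,\}_{(i,j)\in L}
\]
is a torification of $X$; granting the claim, $X$ is torified. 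So the whole proof reduces to the assertion that a decomposition of $X$ composed with decompositions of its pieces is again a decomposition of $X$.

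First I would record the easy points: each $\vphi_i\circ\psi_{i,j}$ is an immersion, since a composition of immersions is again an immersion, and each $T_{i,j}$ is a nonempty split torus over $\Z$ by construction. Then I would verify the universal property of a decomposition. Given a geometric point $p=\Spec\Omega\to X$, the decomposition $X=\decomp_{i\in I} X_i$ provides a unique $i\in I$ and a unique morphism $p\to X_i$ lifting $p\to X$ along $\vphi_i$; regarding $p\to X_i$ as a geometric point of $X_i$, the decomposition $X_i=\decomp_{j\in J_i} T_{i,j}$ provides a unique $j\in J_i$ and a unique morphism $p\to T_{i,j}$ lifting $p\to X_i$ along $\psi_{i,j}$. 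Composing the two liftings yields a factorization of $p\to X$ through $\vphi_i\circ\psi_{i,j}$, which settles existence.

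For uniqueness, suppose $(i,j),(i',j')\in L$ and that $\alpha\colon p\to T_{i,j}$ and $\alpha'\colon p\to T_{i',j'}$ both factor $p\to X$. Pushing these factorizations down to the pieces, $\psi_{i,j}\circ\alpha\colon p\to X_i$ and $\psi_{i',j'}\circ\alpha'\colon p\to X_{i'}$ are two factorizations of $p\to X$ through $X=\decomp_{i\in I} X_i$, so its universal property forces $i=i'$ and $\psi_{i,j}\circ\alpha=\psi_{i',j'}\circ\alpha'$; then $\alpha$ and $\alpha'$ are two factorizations of this common morphism $p\to X_i$ through $X_i=\decomp_{j\in J_i} T_{i,j}$, so its universal property forces $j=j'$ and $\alpha=\alpha'$. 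Hence the displayed family is a decomposition of $X$ into split tori, i.e.\ a torification. I do not expect a genuine obstacle here; the only point to get right is the order in which the two uniqueness statements are applied (first separate the pieces $X_i$, then separate the tori inside a fixed piece), together with the (standard) fact that composites of immersions are immersions. If in addition one wants the index set finite in the finite-type case, note that $I$ is finite because a scheme of finite type over $\Z$ has only finite decompositions, and each $X_i$ is of finite type over $\Z$ as a locally closed subscheme of $X$, whence every $J_i$, and thus $L$, is finite.
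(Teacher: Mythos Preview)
Your proof is correct and follows exactly the same approach as the paper: compose the torifications of the pieces with the given decomposition and use the index set $\bdu_{i\in I} J_i$. The paper's proof is a single sentence asserting that this works, whereas you have carefully filled in the verification that a decomposition of a decomposition is again a decomposition (immersions compose, and the universal property for geometric points holds by applying the two uniqueness statements in the order you describe); your extra remark on finiteness is not needed for the lemma as stated, since it concerns torified \emph{schemes} rather than torified varieties.
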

	\begin{proof}
		If for each $X_i$ we have $X_i= \decomp_{j\in J_i}T_j$, then $\decomp_{j\in \bdu_{i\in I}J_i} T_j$ is a torification of $X$.
	\end{proof}

	
\subsection{Zeta functions over $\F_1$}

 One expects a certain zeta function $\zeta_X$ of a geometric object $X$ over $\F_1$
 that actually does not depend on the particular geometry,
 but is the ``limit $q$ goes to $1$'' of the zeta functions of the base extensions $X_{\F_q}=X\otimes_\Fun\F_q$.
 We recall the precise notion of a zeta function over $\F_1$ and calculate it in the case that $X\otimes_\Fun\Z$ is a torified variety.

 Assume that there is a polynomial $N(T)\in\Z[T]$ such that
 $N(q)=\#X_{\F_q}(\F_q)$ whenever $q$ is a prime power.
 This polynomial is called the \dtext{counting function of $X$}.
 Using the formal power series
 	\[
		Z(q,T):=\exp\left( \sum_{r\geq 1}N(q^r)T^r/r \right)\;, 
	\]
 we define the \dtext{zeta function of $X$} as
 	\[
		\zeta_X(s):= \lim_{q\to 1} Z(q,q^{-s})(q-1)^{N(1)}\;.
	\]
 We have the following result.

\begin{theorem}[Soul\'{e}]\label{thm:zetaformula}
	The function $\zeta_X(s)$ is a rational function with integral coefficients. Moreover, if $N(x)=a_0 + a_1x +\dotsb + a_dx^d$, then we have
		\begin{equation*}
			\zeta_X(s) = \prod_{i=0}^d (s-i)^{-a_i}.
		\end{equation*} 	
\end{theorem}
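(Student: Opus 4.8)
The plan is to reduce everything to a direct computation with the definitions of $Z(q,T)$ and $\zeta_X(s)$, exploiting the fact that the counting function $N$ is a polynomial. First I would write $N(x) = \sum_{i=0}^d a_i x^i$ and substitute into the exponential generating series: since $N(q^r) = \sum_i a_i q^{ri}$, we get
\[
\sum_{r\geq 1} N(q^r)\frac{T^r}{r} \;=\; \sum_{i=0}^d a_i \sum_{r\geq 1} \frac{(q^iT)^r}{r} \;=\; -\sum_{i=0}^d a_i \log(1-q^iT),
\]
so that $Z(q,T) = \prod_{i=0}^d (1-q^iT)^{-a_i}$. Evaluating at $T = q^{-s}$ yields $Z(q,q^{-s}) = \prod_{i=0}^d (1-q^{i-s})^{-a_i}$, and hence
\[
\zeta_X(s) \;=\; \lim_{q\to 1}\; (q-1)^{N(1)}\prod_{i=0}^d (1-q^{i-s})^{-a_i}.
\]
Since $N(1) = \sum_{i=0}^d a_i$, the prefactor $(q-1)^{N(1)}$ distributes as $\prod_{i=0}^d (q-1)^{a_i}$, so the whole expression is $\prod_{i=0}^d \bigl((q-1)/(1-q^{i-s})\bigr)^{a_i}$.

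The heart of the argument is then the limit $q\to 1$ of each factor $(q-1)/(1-q^{i-s})$. Writing $q = 1+\varepsilon$ and letting $\varepsilon \to 0$, one has $1 - q^{i-s} = 1 - (1+\varepsilon)^{i-s} = -(i-s)\varepsilon + O(\varepsilon^2) = (s-i)\varepsilon + O(\varepsilon^2)$, while $q - 1 = \varepsilon$; hence the quotient tends to $(s-i)^{-1}$. (One should note here that this is an identity of rational functions in $s$, valid for those $s$ where no denominator vanishes; equivalently one may use l'Hôpital or the substitution $q = e^u$, $u\to 0$, giving $u/(1-e^{u(i-s)}) \to 1/(s-i)$.) Taking the product over $i$ with exponents $a_i$ gives exactly
\[
\zeta_X(s) \;=\; \prod_{i=0}^d (s-i)^{-a_i},
\]
which is manifestly a rational function of $s$ with integer exponents $-a_i \in \Z$, establishing both assertions of the theorem at once. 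The claim about integral coefficients follows since each $(s-i)$ is a monic linear polynomial with integer coefficients, so the finite product of their integer powers is a ratio of monic integer polynomials.

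The only genuinely delicate point is the interchange of the limit $q\to 1$ with the finite product, and the justification that the formal manipulations with $\log$ and $\exp$ of power series produce the claimed closed form $Z(q,T) = \prod_i (1-q^iT)^{-a_i}$ as an identity of formal power series in $T$ (or of analytic functions for $q$ in a suitable range). Both are routine: the product is finite, so the limit passes through termwise, and the identity $-\log(1-u) = \sum_{r\geq 1} u^r/r$ together with $\exp$ and $\log$ being mutually inverse on the relevant domains handles the rest. A minor bookkeeping subtlety worth flagging is the case $a_i = 0$, where the corresponding factor is simply $1$ and contributes nothing — consistent with $(s-i)^0 = 1$ — so no issue arises even when $N$ has gaps in its coefficients. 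I would therefore expect the write-up to be short, with the substitution into the generating series and the elementary limit computation being the two steps that carry all the content.
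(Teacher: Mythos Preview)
Your argument is correct and is the standard computation. Note, however, that the paper does not actually supply a proof of this statement: it is recorded as a result of Soul\'e (hence the attribution in the theorem heading) and is quoted without proof from \cite{Soule2004}. Your derivation --- factoring $Z(q,T)=\prod_i(1-q^iT)^{-a_i}$ via the logarithmic series, distributing $(q-1)^{N(1)}$ over the product, and computing each limit $\lim_{q\to1}(q-1)/(1-q^{i-s})=1/(s-i)$ --- is exactly the argument one finds in Soul\'e's paper, so there is nothing to compare.
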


\begin{proposition}\label{prop:counting}
 Let $X=\decomp T_i$ be a torified variety. Put $I^{(l)}:=\{i\in I|\ \dim T_i= l\}$ and $\delta_l:=\#I^{(l)}$.
 Then $X$ has a counting function, which is given by 
			\begin{equation*}\label{eq:counting}
				N(q)=\sum_{l=0}^{\dim X} \delta_l (q-1)^l \in \Z[q]\;.
			\end{equation*}
 In particular, the numbers $\delta_l$ are independent from the chosen torification of $X$.			
\end{proposition}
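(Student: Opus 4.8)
The plan is to count $\F_q$-points of $X$ by using the decomposition together with Lemma~\ref{lemma:decomp}. Since $\F_q$ is a field, Lemma~\ref{lemma:decomp} gives a bijection $\bdu_{i\in I} T_i(\F_q) \to X(\F_q)$, so that $\#X(\F_q) = \sum_{i\in I}\#T_i(\F_q)$. Each $T_i$ is isomorphic to $\Gm^{d_i}$, and $\#\Gm^{d_i}(\F_q) = (q-1)^{d_i}$. Grouping the terms according to the dimension $l = d_i$ and using the notation $I^{(l)}$, $\delta_l$, we get $\#X(\F_q) = \sum_{l\geq 0}\delta_l(q-1)^l$. The dimensions $d_i$ are bounded by $\dim X$ (a torus of dimension $d_i$ embeds in $X$, so $d_i\leq \dim X$), hence the sum is finite and ranges over $l = 0,\dots,\dim X$. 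This shows that the polynomial $N(q) := \sum_{l=0}^{\dim X}\delta_l(q-1)^l \in \Z[q]$ satisfies $N(q) = \#X(\F_q)$ for every prime power $q$, so it is a counting function for $X$.

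For the last assertion, I would argue that the counting function is unique as a polynomial: if $N_1, N_2 \in \Z[q]$ both agree with $\#X_{\F_q}(\F_q)$ on the infinitely many prime powers $q$, then $N_1 - N_2$ has infinitely many roots and hence $N_1 = N_2$. It follows that the polynomial $N(q)$ does not depend on the torification. Reading off the coefficients of $N(q)$ in the basis $\{(q-1)^l\}$ — which is again a $\Z$-basis of $\Z[q]$, so the coefficients are determined by $N$ — recovers the numbers $\delta_l$, so these too are torification-independent.

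Strictly speaking one should be slightly careful about the statement "$X_{\F_q}(\F_q) = X(\F_q)$": here $X$ is a $\Z$-scheme and $X_{\F_q} = X\otimes_\Z\F_q$, so $X_{\F_q}(\F_q) = \Hom_{\F_q}(\Spec\F_q, X\otimes_\Z\F_q) = \Hom_\Z(\Spec\F_q, X) = X(\F_q)$ by the adjunction between base change and restriction of scalars; I would insert this identification explicitly. The rest is genuinely routine.

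I do not expect a serious obstacle. The only point requiring minor attention is the bound $d_i \leq \dim X$, which follows because an immersion $T_i \hookrightarrow X$ forces $\dim T_i \leq \dim X$ (dimension cannot increase under a locally closed immersion), ensuring that the index $l$ in the formula indeed runs only up to $\dim X$ and that $N$ is a genuine polynomial rather than merely a power series in $q-1$.
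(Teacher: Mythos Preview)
Your proof is correct and follows essentially the same line as the paper's: both invoke Lemma~\ref{lemma:decomp} with $S=\Spec\F_q$ together with $\#\Gm^l(\F_q)=(q-1)^l$ to obtain the counting formula, and then argue that two polynomials agreeing at all prime powers must coincide. The extra care you take (the bound $d_i\leq\dim X$, the adjunction $X_{\F_q}(\F_q)=X(\F_q)$, and the remark that $\{(q-1)^l\}$ is a $\Z$-basis of $\Z[q]$) is welcome but not essential to the argument.
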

	
	\begin{proof}
      The form of the counting function follows from $\#\Gm^l(\F_q)=(q-1)^l$ and from Lemma \ref{lemma:decomp}. The independence of the $\delta_l$ from the torification can be seen as follows. Let $T$ and $S$ be two torifications of $X$, and denote by $N_T(q)$ and $N_S(q)$ the corresponding counting functions. For every finite field $\F_q$ we have
			\[
				N_{T}(q) = \card{\bdu T_i(\F_q)} = \card{X(\F_q)} = \card{\bdu S_j(\F_q)} = N_S(q),
			\]
		so $N_T(q)$ and $N_S(q)$ coincide in an infinite number of values, and henceforth they must be equal as polynomials.
	\end{proof}

 With this, we can calculate the zeta function for a model $X$ of a torified variety over $\F_1$.
 Let the numbers $\delta_l$ be defined as in the proposition. Then
	\begin{equation*}\label{eq:counting2}
		N(q) \quad = \quad \sum_{l=0}^{\dim X} \delta_l\, (q-1)^l 
		\quad = \quad \sum_{l=0}^{\dim X} \ \Biggl(\sum_{k=l}^{\dim X} (-1)^{k-l}\, \binom{k}{l}\,\delta_k \Biggr)\ q^l\;,
	\end{equation*}
 from where we can compute the zeta function of a torified variety by applying Theorem \ref{thm:zetaformula}.
 It is possible to recover all examples of zeta functions in \cite{Kurokawa2005} by this method 
 since all these examples concern torified varieties as explained in the following example section.


\subsection{Examples of torified varieties}
\label{section:torified_examples}

\subsubsection{Tori and the multiplicative group}
\label{subsubsection:Gm}

If $X=\Gm^d$ is a product of multiplicative groups, it admits the obvious torification given by the identity map $\Gm^d\to X$.

\subsubsection{The affine spaces $\A^n$.}
\label{subsubsection:A^n}

The affine line admits a torification $\A^1= \Gm^0 \du \Gm^1$, obtained by choosing any point as the image of $\Gm^0$ and identifying its complement with $\Gm^1$.

By applying Lemma \ref{lemma:productoftorified}, and taking into account that $\Gm^r\times \Gm^s\cong \Gm^{r+s}$ we obtain a torification of the affine spaces by
	\begin{equation*}
		\A^n = \Gm^0 \du n \Gm^1 \du \dotsb \du \binom{n}{d} \Gm^d \du \dotsb \du \Gm^n,
	\end{equation*}
	where by $r\Gm^d$ we mean that we get $r$ different copies of the torus $\Gm^d$.


 \subsubsection{Toric varieties}
 \label{section_toric_as_torified}
 
 As a general reference for toric varieties consider \cite{Fulton1993} or \cite{Maillot2000}.
 We introduce the notation for toric varieties that is frequently used in this paper.
 Let $\Delta$ be a fan, i.e. a family of pairwise distinct cones ordered by inclusion such that the faces of a cone in $\Delta$ are in $\Delta$
 and such that the intersection of two cones in $\Delta$ is a face of each of the cones
 (cones are always assumed to be embedded in $\R^n$ and to be strictly convex and rational).
 To a cone $\tau\subset\R^n$ of $\Delta$, we associate the semi-group $A_\tau=\tau^\vee\cap(\Z^n)^\vee$, 
 where $\tau^\vee\subset(\R^n)^\vee$ is the dual cone of $\tau$ and $(\Z^n)^\vee$ is the dual lattice to $\Z^n$ in the standard basis of $\R^n$.
 We put $U_\tau=\Spec\Z[A_\tau]$.
 An inclusion $\tau\subset\tau'$ defines an inclusion of semi-groups $A_{\tau'}\subset A_\tau$ 
 and an open immersion of schemes $U_\tau\hookrightarrow U_{\tau'}$.
 Then the toric variety $X$ associated to $\Delta$ is the direct limit of the family $\{U_\tau\}_{\tau\in\Delta}$ 
 relative to the immersions $U_\tau\hookrightarrow U_{\tau'}$.
 In the following, we will always consider toric varieties $X$ together with a fixed fan $\Delta$.

 A morphism $\Delta\to \Delta'$ of fans of toric varieties $X$ and $X'$, respectively, is map $\tilde\psi$ between ordered sets 
 together with a direct system of semi-group morphisms $\psi_\tau:\tau\to\tilde\psi(\tau)$ (with respect to inclusion of cones) 
 whose dual morphisms restrict to $\psi_\tau^\vee: A_{\tilde\psi(\tau)}\to A_\tau$, where $\tau$ ranges through $\Delta$.
 Taking the direct limit over the system of scheme morphisms $\Spec\Z[\psi_\tau^\vee]:U_\tau\to U_{\tilde\psi(\tau)}$
 yields a morphism $\psi:X\to X'$ between toric varieties.
 A triple $(\psi,\tilde\psi,\{\psi_\tau\})$ like this is called a toric morphism.
 The \emph{category of toric varieties} consists of toric varieties over $\Z$ together with toric morphisms.
 
Let $X$ be a toric variety with fan $\Delta$. 
Let $A_\tau^\times$ be the group of invertible elements of $A_\tau$, then the algebra morphism
	\begin{eqnarray*}
		\Z[A_\tau] & \lto & \Z[A_\tau^\times] \\
		a & \lmto & 	\left\{
						\begin{array}{ll}
							a & \text{if $a\in A_\tau^\times$},\\
							0 & \text{if $a\in A_\tau\setminus A_\tau^\times$}
						\end{array}
					\right.
	\end{eqnarray*}
	defines an immersion of the torus $T_\tau = \Spec\Z[A_\tau^\times]$ into $U_\tau\subseteq X$, and we obtain the well-known decomposition of $X$ into tori $T_\tau$ (cf. \cite[\S4, Prop. 2]{Demazure1970}, \cite[\S 3.1]{Fulton1993} or \cite[Proposition 2,2,14]{Maillot2000}), that in our formulation reads as follows:

	\begin{proposition}
		The family $T_{\Delta}=\{T_\tau\hookrightarrow X\}_{\tau\in \Delta}$ is a torification of $X$.
	\end{proposition}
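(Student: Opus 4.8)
The plan is to verify that the family $T_\Delta = \{T_\tau \hookrightarrow X\}_{\tau \in \Delta}$ satisfies the definition of a decomposition, and that each $T_\tau$ is isomorphic to a split torus $\Gm^{d_\tau}$; the latter is immediate since $A_\tau^\times$ is a finitely generated free abelian group (being a saturated submonoid consisting of units in the dual lattice), so $\Z[A_\tau^\times] \cong \Z[t_1^{\pm 1}, \dots, t_{d_\tau}^{\pm 1}]$ where $d_\tau = \rk A_\tau^\times$. The real content is the bijectivity on geometric points, and the natural strategy is to reduce to the affine pieces $U_\tau = \Spec \Z[A_\tau]$ and use the standard orbit decomposition of a toric variety.

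First I would treat the affine case. Fix an algebraically closed field $\Omega$. A point in $U_\tau(\Omega)$ is a semi-group homomorphism $x: A_\tau \to (\Omega, \cdot)$ (multiplicative monoid). For such $x$, the set $F_x := \{a \in A_\tau \mid x(a) \neq 0\}$ is a face of $A_\tau$: indeed $x^{-1}(\Omega^\times)$ is a submonoid, and from $x(a+b) = x(a)x(b)$ one checks that $a+b \in F_x$ forces $a, b \in F_x$, which translates (via the standard dictionary between faces of the cone $\tau$ and faces of the semi-group $A_\tau$) into saying $F_x = A_\sigma^\times$-saturated, i.e. $F_x$ corresponds to a face $\sigma \preceq \tau$. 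Then $x$ kills $A_\tau \setminus F_x$ and restricts to a monoid map $F_x \to \Omega^\times$; one identifies $F_x$ with $A_\sigma$ localized appropriately so that $x$ factors as $A_\tau \to \Z[A_\sigma^\times] \to \Omega$, i.e. $x$ lies in the image of $T_\sigma(\Omega) \hookrightarrow U_\tau(\Omega)$ for the unique face $\sigma$ with $A_\sigma^\times = F_x$. Uniqueness of $\sigma$ follows because $\sigma$ is recovered from $x$ as the support of the non-vanishing locus, and a point of $T_\sigma(\Omega)$ is a monoid homomorphism into $\Omega^\times$, hence non-vanishing on all of $A_\sigma^\times$. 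This establishes $U_\tau = \decomp_{\sigma \preceq \tau} T_\sigma$.

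Next I would globalize. Since $X = \varinjlim U_\tau$ along open immersions, every geometric point $p = \Spec \Omega \to X$ factors through some $U_\tau$ (the image point lies in some affine chart), and by the affine case it factors uniquely through exactly one $T_\sigma$ with $\sigma \preceq \tau$. For uniqueness across charts: if $p$ factors through $T_\sigma$ via $U_\tau$ and through $T_{\sigma'}$ via $U_{\tau'}$, then both $\sigma, \sigma'$ lie in $\Delta$ and the two images of $p$ in $X$ agree; passing to a common affine chart $U_\rho$ containing the image point (using that $\Delta$ is a fan, intersections of cones are faces, and the image lands in $U_\sigma \cap U_{\sigma'} = U_{\sigma \cap \sigma'}$), the affine-case uniqueness forces $\sigma = \sigma'$ and the two factorizations to coincide. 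Thus $X = \decomp_{\tau \in \Delta} T_\tau$, and since the $T_\tau$ are split tori, $T_\Delta$ is a torification.

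The main obstacle is the bookkeeping in the affine step: carefully matching faces of the cone $\tau$ with the faces of the monoid $A_\tau$ and checking that the monoid homomorphism $x$ really does factor through the units $\Z[A_\sigma^\times]$ rather than merely through $\Z[A_\sigma]$ — i.e. that once one passes to the non-vanishing face, the map genuinely lands in $\Omega^\times$ on all generators. This is exactly the classical orbit–cone correspondence, so I would invoke \cite[\S3.1]{Fulton1993} or \cite[\S4, Prop. 2]{Demazure1970} for the decomposition of $U_\tau(\Omega)$ into torus orbits and simply reformulate it in the language of Definition of decomposition, rather than reprove it from scratch. The passage from the pointwise orbit decomposition to the scheme-theoretic statement that the maps $T_\tau \to X$ are immersions is standard (each $T_\tau \hookrightarrow U_\tau$ is a closed immersion onto a torus orbit closure's open stratum, hence an immersion into $X$).
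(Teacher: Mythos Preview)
Your proposal is correct and aligns with the paper's approach: the paper does not give an independent proof but simply invokes the classical orbit--cone correspondence, citing \cite[\S4, Prop.~2]{Demazure1970}, \cite[\S3.1]{Fulton1993}, and \cite[Proposition~2.2.14]{Maillot2000}, exactly as you propose at the end. Your sketch of the affine step (identifying the face $F_x$ of $A_\tau$ cut out by the non-vanishing locus of a monoid homomorphism $x:A_\tau\to\Omega$) and the globalization via the fan is the standard argument behind those references, so you have in fact supplied more detail than the paper itself.
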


Given a toric morphism $(\psi, \tilde{\psi}, \{\psi_\tau\}):(X,\Delta)\to (X',\Delta')$, we obtain a torified morphism
$(\vphi,\tilde{\vphi}, \{\vphi_\tau \}):(X,T_\Delta) \lto (X', T_{\Delta'})$ as follows:
	\begin{itemize}
		\item $\vphi = \psi:X \to X'$,
		\item $\tilde{\vphi}= \tilde{\psi}:\Delta \to \Delta'$,
		\item since the map $\psi_\tau: A_{\tilde{\psi}(\tau)} \to A_\tau$ preserves units, it restricts to a map $A_{\tilde{\psi}(\tau)}^\times \to A_\tau^\times$, and therefore it induces a homomorphism of tori $\vphi_\tau := \Spec \Z[\psi_\tau] : T_\tau \to T_{\tilde{\psi}(\tau)}= T_{\tilde{\vphi}(\tau)}$.
	\end{itemize}

\begin{remark}
	The triple $(\vphi,\tilde{\vphi}, \{\vphi_\tau \}):(X,T_\Delta) \lto (X', T_{\Delta'})$ is indeed a torified morphism: the diagram
		\[
			\xymatrix{
				X \ar[r]^{\vphi} & X' \\
				T_\tau \ar[u] \ar[r]_{\vphi_\tau} & T_{\tilde{\vphi}(\tau)} \ar[u]
			}
		\]
		commutes because
		\[
			\xymatrix{
				X \ar[rr]^{\psi} && X' \\
				U_\tau \ar[u] \ar[rr]_{\Spec\Z[\psi_\tau^\vee]} && U_{\tilde{\vphi}(\tau)} \ar[u]
			}
		\]
		does.
\end{remark}

Since $\{U_\tau\}$ is an affine open cover that is compatible with the torification that we have constructed and since every toric morphism is covered by morphisms between affine open toric subvarieties, we have the following result.

\begin{proposition}
	The torifications associated to toric varieties are affine.
\end{proposition}


\subsubsection{Grassmannians and their Schubert varieties}
 \label{section_Grassmannians}
	For a couple of positive integers $0\leq k\leq n$, the \dtext{Grassmann variety} $\Gr(k,n)=\Gr_k(\mathbb{A}^n)$ is defined as the variety of $k$--planes in the affine space $\mathbb{A}^n$ (cf. \cite[Chapter 14]{Fulton1998}). 

	 The Grassmann varieties admit a nice decomposition in \dtext{Schubert cells} (cf. \cite[Chapter 1.\S 5]{Griffiths1978} and \cite[Chapter 14, \S 6]{Fulton1998}) indexed by the set of multi-indices 
	\[
		I_{k,n} := \{\underline{i}=(i_1,i_2,\dotsc,i_k)|\ 1\leq i_1 < i_2 < \dotsb < i_k \leq n \},
	\]
	partially ordered by $(i_1,i_2,\dotsc,i_k)\leq(j_1,j_2,\dotsc,j_k)$ if and only if $i_l\leq j_l$ for $l=1,\dotsc,k$. To each element $\underline{i}$ of $I_{k,n}$ we can associate the \dtext{Schubert variety} $X_{\underline{i}}$ and the \dtext{Schubert cell} $C_{\underline{i}}$. The Schubert varieties give a stratification of the Grassmannian, with $X_{\underline{i}}\subseteq X_{\underline{j}}$ if and only if $\underline{i}\leq \underline{j}$, we have $\Gr(k,n)=X_{\underline{i_m}}$, where $\underline{i_m}=(n-k+1,\dotsc,n)$. Moreover, we have the following result (see \cite[Chapter 1.\S 5]{Griffiths1978} for details):
	
	\begin{theorem}[Schubert decomposition]
		Each Schubert cell $C_{\underline{i}}$ is an affine space of dimension $\dim C_{\underline{i}} = \sum_{t=1}^k (i_t-t)$, and we have the cell decomposition
			\[
				X_{\underline{j}}=\decomp_{\underline{i}\leq \underline{j}} C_{\underline{i}}.
			\]
	\end{theorem}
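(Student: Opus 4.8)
The plan is to make the Schubert stratification completely explicit through reduced echelon forms --- equivalently, through the standard affine charts of the Grassmannian --- and to read off both the dimension formula and the combinatorics of the closure order from that description. Fix the complete flag $V_\bullet\colon 0=V_0\subset V_1\subset\dotsb\subset V_n=\A^n$ with $V_j=\langle e_1,\dotsc,e_j\rangle$. A geometric point of $\Gr(k,n)$ is a $k$--dimensional subspace $W\subseteq\Omega^n$, i.e.\ the row span of a full rank $k\times n$ matrix, well defined up to the action of $\Gl_k(\Omega)$. Gaussian elimination --- normalizing each row so that its \emph{last} nonzero entry is its pivot, clearing the pivot columns, and ordering rows so that the pivot columns increase --- produces a unique such representative, whose set of pivot columns is a multi-index $\underline i(W)=(i_1<\dotsb<i_k)\in I_{k,n}$; intrinsically, the $i_t$ are the jump positions of the non-decreasing step function $j\mapsto\dim(W\cap V_j)$. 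I would define $C_{\underline i}$ to be the locus of $W$ with $\underline i(W)=\underline i$, and $X_{\underline j}$ to be the locus of $W$ with $\dim(W\cap V_{j_t})\geq t$ for $t=1,\dotsc,k$.

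For the dimension, observe that in the echelon representative with pivot set $\underline i$ every entry is pinned down except the entries of row $t$ in the columns $\{1,\dotsc,i_t-1\}\setminus\{i_1,\dotsc,i_{t-1}\}$, which are free; there are $(i_t-1)-(t-1)=i_t-t$ of them, so the representatives of $C_{\underline i}$ are parametrized by $\A^{\sum_t(i_t-t)}$. To upgrade this to an isomorphism of $\Z$--schemes and to see that $C_{\underline i}\hookrightarrow\Gr(k,n)$ is an immersion, I would identify $C_{\underline i}$ with the linear subspace of the standard affine chart $U_{\underline i}\cong\A^{k(n-k)}_\Z$ --- the chart on which the $k\times k$ minor on the columns $\underline i$ is invertible --- obtained by equating the ``non--free'' coordinates to zero. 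This realizes $C_{\underline i}\cong\A^{\dim C_{\underline i}}_\Z$ with $\dim C_{\underline i}=\sum_t(i_t-t)$, and its inclusion into $\Gr(k,n)$ as a locally closed immersion.

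It then remains to assemble the decomposition. Each condition $\dim(W\cap V_{j_t})\geq t$ is closed (a prescribed rank drops), so $X_{\underline j}$ is a closed subscheme of $\Gr(k,n)$; and it holds for all $t$ exactly when the step function of $W$ has at least $t$ jumps in $\{1,\dotsc,j_t\}$ for every $t$, i.e.\ when, writing $\underline i(W)=(i_1,\dotsc,i_k)$, one has $i_t\leq j_t$ for all $t$, that is $\underline i(W)\leq\underline j$. Hence set--theoretically $X_{\underline j}=\bigcup_{\underline i\leq\underline j}C_{\underline i}$. Uniqueness of the echelon form --- over an arbitrary algebraically closed field $\Omega$ --- shows that every geometric point of $X_{\underline j}$ factors through exactly one $C_{\underline i}$, and through it uniquely; combined with the immersion property this is precisely a decomposition $X_{\underline j}=\decomp_{\underline i\leq\underline j}C_{\underline i}$ in the sense of Section~\ref{section:torified}. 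As a by--product $C_{\underline j}$ is open and dense in $X_{\underline j}$, so $X_{\underline j}$ is irreducible of dimension $\sum_t(j_t-t)$.

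The part carrying the real content --- hence the main obstacle --- is the dictionary between the combinatorial partial order on $I_{k,n}$ and the geometry: that the rank conditions defining $X_{\underline j}$ collapse exactly to the componentwise inequalities $i_t\leq j_t$, and that the passage from the naive set--theoretic stratification to a genuine scheme--theoretic decomposition over $\Z$ loses nothing, i.e.\ that each $C_{\underline i}$ really is cut out linearly inside a standard chart, so is an affine space over $\Z$ and the inclusion really is an immersion. A smaller but essential bookkeeping point is to orient the echelon/flag convention at the outset so that the free entries lie to the \emph{left} of the pivots, which is what makes the dimension come out as $\sum_t(i_t-t)$ rather than the complementary $k(n-k)-\sum_t(i_t-t)$.
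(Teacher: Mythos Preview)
Your argument is correct and is essentially the standard proof via row echelon forms; it is, in fact, the argument one finds in the reference the paper cites. Note, however, that the paper does \emph{not} supply its own proof of this statement: the Schubert decomposition is quoted as a classical input, with the reader referred to Griffiths--Harris (\cite[Chapter~1, \S5]{Griffiths1978}) for the details. So there is nothing to compare against beyond observing that your write-up matches the cited source in spirit and in the key steps --- the echelon normal form, the count of free entries giving $\sum_t(i_t-t)$, and the translation of the rank conditions $\dim(W\cap V_{j_t})\geq t$ into the componentwise order $\underline i\leq\underline j$. Your additional care in checking that the cells are genuinely affine spaces over $\Z$ and that the inclusions are locally closed immersions is exactly what is needed to invoke the paper's notion of a decomposition, and is a welcome clarification since the classical references typically work over a field.
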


	As an immediate consequence, applying Lemma \ref{lemma:decompoftorified} and the previous example, we obtain a torification for all Schubert varieties, and in particular for the Grassmann varieties.
 
 \begin{example}\label{ex:gr24}
 Let us illustrate this example in the particular case of the Grassmannian $\Gr(2,4)$. This example is of particular interest in connection with the open problem of realizing $\Gr(2,4)$ as a variety over $\F_1$, which was posed by Soul\'e in \cite[section 5.4]{Soule2004}. For the set $I_{2,4}$ we get, with its partial ordering
  \[
   I_{2,4} =
    \begin{array}{l}
      \xymatrix@R=0.2pc{
       & & (1,4) \ar[dr] \\
      (1,2) \ar[r] & (1,3) \ar[ur] \ar[dr] & & (2,4) \ar[r] & (3,4)\\
      & & (2,3) \ar[ur]
     }
    \end{array}
  \]
  generating the corresponding Schubert cells
  \[
   C_{1,2}\cong \A^0,\ C_{1,3}\cong \A^1,\ C_{1,4}\cong C_{2,3}\cong \A^2,\ C_{2,4}\cong \A^3,\ C_{3,4}\cong \A^4,
  \]
  that lead to the torification
  \begin{eqnarray*}
   \Gr(2,4) & = & C_{1,2} \du C_{1,3}\du C_{1,4}\du C_{2,3}\du C_{2,4}\du C_{3,4} = \\
   & = & \A^0 \du \A^1 \du 2 \A^2 \du \A^3 \du \A^4 = \\
   & = & 6\Gm^0 \du 12\Gm^1 \du 11\Gm^2 \du 5\Gm^3 \du \Gm^4.
  \end{eqnarray*}
   
 It is worth noting that the above torification is \emph{not} compatible with the usual affine open cover of $\Gr(2,4)$ by six $4$-dimensional affine spaces, which comes from embedding $\Gr(2,4)$ into $\mathbb P^5$ via the Pl\"ucker map and intersecting the image with the canonical atlas of $\mathbb P^5$. Namely, all $6$ opens are needed to cover $\Gr(2,4)$, but the intersection of all opens does not contain a $4$-dimensional torus as a subvariety. This shows that in general we cannot expect the Grassmann varieties to be affinely torified.
 \end{example}


\subsubsection{Flag varieties}
 \label{section_flag_varieties}
Let $V$ be a linear bundle (over a point) of rank $n$. For each $m$--tuple $(d_1,\dotsc,d_m)$ of positive integers, with $d_1+ \dotsb + d_m = n$, a \dtext{flag} of type $(d_1,\dotsc,d_m)$ consists of an increasing sequence of linear sub-bundles
	\[
		0 = V_0 \subset V_1 \subset V_2 \subset \dotsb \subset V_m
	\]
such that $\rk (V_j/V_{j-1})=d_j$ for all $j=1,\dotsc, m$. The set $X(d_1,\dotsc, d_m)$ of all flags of type $(d_1,\dotsc, d_m)$ is a scheme, known as the \dtext{flag variety} of type $(d_1,\dotsc, d_m)$. For instance, the flag variety $X(d,n-d)$ coincides with the Grassmannian $\Gr(d,n)$.

As in the case of the Grassmann varieties, flag varieties admit a decomposition in Schubert cells, though their description is in general more complicated. The underlying idea to this approach is the realization of the flag variety $X(d_1,\dotsc, d_m)$ as the quotient $GL_n/ P(d_1,\dotsc, d_m)$, where $P = P(d_1,\dotsc, d_m)$ is the standard parabolic subgroup of $GL_n$ consisting of block upper-triangular invertible matrices with blocks of sizes $d_1,\dotsc, d_m$. The Schubert cells and varieties are then parametrized by the coset space $S_n/(S_{d_1}\times S_{d_2}\times \dotsb \times S_{d_m}) = W/W_P$. Each right coset modulo $W_P$ contains a unique representative $w$ such that we have 
	\begin{gather*}
		w(1)<w(2)<\dotsb < w(d_1), \\
		w(d_1+1)<w(d_1+2)<\dotsb < w(d_1+d_2) \\
		\vdots \\
		w(d_1 + \dotsb + d_{m-1} + 1 ) < \dotsb < w(d_1+\dotsb + d_m).
	\end{gather*}
This defines the set $W^P$ of minimal representatives of $W/W_P$. The Schubert cells in $GL_n/P$ are the orbits $C_{wP}:=(BwP)/P$, where $B$ denotes again the Borel group consisting of all the upper triangular matrices, and the Schubert varieties $X_{wP}$ are defined as the closures of the Schubert cells. A detailed description of this decomposition can be found in \cite[Section 10.2]{Fulton1997}.
 
	Exactly as it happened with the Grassmannians, Lemma \ref{lemma:decompoftorified} applied to the Schubert cell decomposition provides a torification of the flag varieties and their Schubert subvarieties.  

	\begin{example}[Complete flag varieties]
		Consider the flag variety $X=X(1,\dotsc, 1)$, that can be identified with the quotient $GL_n/B$. In this case, $P(1,\dotsc, 1)=B$ the group of upper-triangular matrices, and we have $W_P=\{e\}$ the trivial group, and thus Schubert cells are parametrized by elements of the Weyl group $W=S_n$. Associated to each permutation $w\in S_n$ we construct the complete flag
			\[
				F_w:= 0\subset \langle e_{w(1)} \rangle \subset \dotsb \subset \langle e_{w(1)},\dotsc, e_{w(k)} \rangle \subset \dotsb.
			\]
		Schubert cells are given by $C_w = BF_w$, we can explicitly compute the dimension as $\dim C_w = l(w)$, the length of the permutation $w$, and we have the decomposition
			\[
				X(1,\dotsc, 1) = \decomp_{w\in S_n} \A^{l(w)},
			\]
		that we can turn into a torification in the same way we did for the Grassmannian $\Gr(2,4)$. As it happened for the Grassmann varieties, in general it is not clear whether the above torification is affine.
	\end{example}


\subsubsection{Chevalley schemes}
 \label{section_Chevalley}
 We establish an affine torification for split Chevalley schemes.
 As general reference, see \cite[Expose XXI and XXII]{SGA3} or the survey in \cite[Section 4]{Connes2008}.
 
 Let $G$ be a split Chevalley scheme over $\Z$ with maximal split torus $T$. 
 Let $N$ be the normalizer of $T$ in $G$ and $W=N(\C)/T(\C)$ be the Weyl group.
 Let $B$ be a Borel subgroup of $G$ that contains $T$ and has unipotent radical $U$.
 Let $\Phi$ be the set of roots and let $\Phi^+\subset\Phi$ be the set of positive roots corresponding to $B$.
 Let $X_r$ denote the additive $1$-parameter subgroup of $G$ defined by $r\in\Phi$.
 Put $\Phi_w=\{r\in\Phi^+\mid w(r)<0\}$ and let $U_w$ be the subgroup of $U$ that is generated by $\{X_r\}_{r\in\Phi_w}$.
 Choose a set of representatives $\{n_w\}_{w\in W}$ for $W$ in $N(\Z)$.
 We restate the Bruhat decomposition of $G$ in the language of the present paper.
 
\begin{theorem}[Bruhat decomposition]\label{Bruhat_decomposition}\ \\ 
 The family of inclusions of subschemes $\{U_wn_wTU\hookrightarrow G\}_{w\in W}$ is a decomposition of $G$.
\end{theorem}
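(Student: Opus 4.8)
The plan is to recognize this statement as a reformulation, in the language of decompositions of schemes, of the classical Bruhat decomposition; the only genuinely nonformal inputs are the scheme-theoretic structure of the Bruhat cells over $\Z$ and the set-theoretic Bruhat decomposition over algebraically closed fields. First I would reduce the condition in the definition of a decomposition to a statement about image points. For a geometric point $p=\Spec\Omega\to G$ (with $\Omega$ algebraically closed) we must produce a \emph{unique} $w\in W$ such that $p$ factors through $U_wn_wTU\hookrightarrow G$; but once $w$ is fixed the factorization is automatically unique, because an immersion is a monomorphism. Moreover, a morphism from the spectrum of a field into $G$ factors through a locally closed subscheme $Z\subseteq G$ precisely when its image point lies in $Z$ (the residue field of that point is the same computed in $G$ or in $Z$, since $Z$ is locally closed), so everything comes down to locating the image point of $p$.

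Next I would assemble the two ingredients. (i) For each $w\in W$ the multiplication morphism
\[
\mu_w\colon U_w\times_\Z\{n_w\}\times_\Z T\times_\Z U\;\lto\;G,\qquad(u,t,v)\lmto un_wtv,
\]
is an immersion of $\Z$-schemes; write $C_w\subseteq G$ for the resulting locally closed subscheme, so that $U_wn_wTU=C_w$ carries the scheme structure making $\mu_w$ an isomorphism onto $C_w$. This is the relative Bruhat decomposition of the split Chevalley scheme $G$ over $\Z$ (and here one uses that the subgroup $U_w=\langle X_r\rangle_{r\in\Phi_w}$ is exactly the complement of $U$ singled out by the structure theory so that $\mu_w$ is an isomorphism onto the cell $BwB$); see \cite[Exp.\ XXI and XXII]{SGA3} and the exposition in \cite[Section 4]{Connes2008}. (ii) For every algebraically closed field $\Omega$ the classical Bruhat decomposition gives the disjoint union
\[
G(\Omega)\;=\;\bdu_{w\in W}\,U_w(\Omega)\,n_w\,T(\Omega)\,U(\Omega),
\]
and the $w$-th piece is exactly $C_w(\Omega)$ sitting inside $G(\Omega)$; this holds in every characteristic, as it depends only on the split reductive group $G_\Omega$ together with its Borel subgroup.

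With this in place the argument finishes quickly. A geometric point $p=\Spec\Omega\to G$ amounts to an $\Omega$-point $\bar p\in G(\Omega)$ with image point $x\in G^{top}$. By (ii) there is a unique $w\in W$ with $\bar p\in U_w(\Omega)n_wT(\Omega)U(\Omega)=C_w(\Omega)$; in particular $x$ lies in $C_w$, so by the field-point criterion of the first paragraph $p$ factors through $\mu_w$, and this factorization is unique because $\mu_w$ is a monomorphism. If $p$ also factored through $\mu_{w'}$ then $\bar p\in C_{w'}(\Omega)$, which by the disjointness in (ii) forces $w'=w$. Hence $\{U_wn_wTU\hookrightarrow G\}_{w\in W}$ satisfies the defining property of a decomposition. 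I expect the main obstacle to be ingredient (i): that each Bruhat cell $U_wn_wTU$ is cut out as a locally closed subscheme \emph{uniformly over $\Z$} is where the structure theory of reductive group schemes really enters, and it is not something one proves by hand; by contrast, the reduction in the first paragraph and the passage from (i)--(ii) to the decomposition property are purely formal, and the input (ii) is classical.
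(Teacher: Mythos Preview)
Your argument is correct, and in fact more detailed than what the paper does: the paper gives no proof at all and simply refers to \cite[Exp.\ XXII, Thm.\ 5.7.4 and Rem.\ 5.7.5]{SGA3}. Your reduction to the field-point criterion and the classical Bruhat decomposition over algebraically closed fields is sound, and you correctly identify that the only substantive input---that each $U_wn_wTU$ is a locally closed subscheme of $G$ over $\Z$ via the multiplication map---is exactly the content of the cited SGA3 result, so both approaches rest on the same reference.
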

 
 We refer to SGA3 (\cite[Expose XXII, Thm.\ 5.7.4 and Rem.\ 5.7.5]{SGA3}) for a proof.
 
\begin{proposition}
 \label{thm_torified_Chevalley_schemes}
 Let $G$ be a split Chevalley scheme. 
 Then there exists an affine torification $S$ of $G$.
\end{proposition}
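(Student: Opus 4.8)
The plan is to read off a torification of $G$ from the Bruhat decomposition and then observe that, since $G$ is affine, such a torification is automatically affine in the required sense. The first thing I would do is unwind the shape of a Bruhat cell. By Theorem~\ref{Bruhat_decomposition} we have $G=\decomp_{w\in W}C_w$ with $C_w=U_wn_wTU$, and the multiplication morphism $U_w\times T\times U\to C_w$, $(u,t,u')\mapsto u\,n_w\,t\,u'$, is an isomorphism of schemes; this is exactly the content of the Bruhat decomposition in SGA3 (\cite[Exp.~XXII, 5.7.4]{SGA3}) on which Theorem~\ref{Bruhat_decomposition} rests. So each $C_w$ is, as a scheme, the product of the split torus $T$ with two split unipotent groups.

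Next I would torify the three factors separately and take the product. The torus $T$ is a split maximal torus, hence $T\cong\Gm^n$ as algebraic groups with $n=\rk G$, and it carries the tautological torification. For the unipotent factors, after fixing any linear order on $\Phi^+$ (respectively on $\Phi_w$), the product-of-root-subgroups morphism $\prod_r X_r\to U$ (respectively $\prod_r X_r\to U_w$) is an isomorphism of schemes, again by the structure theory of split unipotent subgroups of a reductive group scheme (\cite[Exp.~XXII]{SGA3}); since each $X_r$ is isomorphic to $\A^1$ as a scheme, this yields scheme isomorphisms $U\cong\A^{|\Phi^+|}$ and $U_w\cong\A^{|\Phi_w|}$. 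Affine spaces are torified (section~\ref{subsubsection:A^n}) and, by Lemma~\ref{lemma:productoftorified}, a product of torified schemes is torified, so $C_w\cong\A^{|\Phi_w|}\times\Gm^n\times\A^{|\Phi^+|}$ is torified; applying Lemma~\ref{lemma:decompoftorified} to $G=\decomp_{w\in W}C_w$ then shows $G$ is torified.

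Finally I would invoke affineness: a split Chevalley scheme is an affine group scheme over $\Z$ (being a Chevalley group, it is a closed subgroup scheme of some $\Gl_{N,\Z}$), so the one-element open cover $\{G\}$ is an affine open cover that trivially respects any torification of $G$. In particular the torification constructed above is affine, which proves the proposition.

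I do not expect a genuinely hard step here; the argument is an assembly of standard facts. The two points that must be stated with care are that the Bruhat cells are isomorphic to $U_w\times T\times U$ as \emph{schemes} and not merely bijective on geometric points, and that the unipotent radicals $U$ and $U_w$ are scheme-theoretically affine spaces — both belong to the structure theory of reductive group schemes in SGA3. If, for later use, one wanted an affine torification compatible with a geometrically meaningful \emph{nontrivial} open cover, say the translated big cells $\{n_v(U^-TU)\}_{v\in W}$, the real work would move to matching the torus pieces with these opens along their overlaps, which is where the combinatorics of $W$ would enter; but affineness of $G$ makes that detour unnecessary for the statement as posed.
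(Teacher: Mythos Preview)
Your proof is correct and follows essentially the same route as the paper: identify each Bruhat cell $U_wn_wTU$ scheme-theoretically with $\A^{|\Phi_w|}\times\Gm^{\rk G}\times\A^{|\Phi^+|}$, apply Lemmas~\ref{lemma:productoftorified} and~\ref{lemma:decompoftorified} to obtain a torification of $G$, and conclude affineness from the fact that $G$ itself is affine. Your added care in singling out that the cell decomposition must hold at the scheme level (not just on geometric points) and that $U$, $U_w$ are scheme-isomorphic to affine spaces is exactly the content the paper leaves implicit.
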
 
 
\begin{proof} 
 Let $r$ be the dimension of $T$, let $s$ be the dimension of $U$ and for every $w\in W$, 
 let $s_w$ be the dimension of $U_w$, which equals the cardinality of $\Phi_w$.
 Then, as a scheme, $U_wn_wTU$ is isomorphic to $\A^{s_w}\times\mathbb G_m^r\times\A^s$ for every $w\in W$.
 Since affine space and the multiplicative group scheme are torified, 
 Lemma \ref{lemma:productoftorified} implies that $U_wn_wTU$ is torified,
 and Theorem \ref{Bruhat_decomposition} together with Lemma \ref{lemma:decompoftorified} implies that $G$ is torified.
 Since $G$ is an affine scheme, $G$ is affinely torified.
\end{proof} 

\begin{example}
 \label{example:Sl(2)}
 Let $G=\Sl(2)$. 
 Let $T$ be the diagonal torus, $N$ its normalizer in $G$ and $B$ the subgroup of upper triangular matrices.
 Let $e\ = \ \bigl( \begin{smallmatrix} 1 & 0 \\ 0 & 1 \end{smallmatrix} \bigr)$ and 
 $w\ = \ \bigl( \begin{smallmatrix} 0 & 1 \\ -1 & 0 \end{smallmatrix} \bigr)$,
 then $\{e,w\}\subset N(\Z)$ represents the Weyl group $W$. 
 In the notation of the proof of Theorem \ref{thm_torified_Chevalley_schemes},
 we have $r=s=s_e=1$ and $s_w=0$, and thus we have decompositions
 $$ N \ = \ \Gm \ \du \ \Gm \quad  \subset \quad     G \ = \ \Gm\times\A^2 \ \du \ \Gm\times\A \ = \ 2\Gm \ \du \ 3\Gm^2 \ \du \ \Gm^3 \;. $$
\end{example} 

\begin{remark}
 As established in Proposition \ref{prop:counting}, the counting function $N(q)$ for every torifiable variety is a polynomial with non-negative integral coefficients in $q-1$. Thus, any variety which counting function does not satisfy this condition cannot be torified. For instance, the variety $\mathbb{P}^1\setminus \{0,1,\infty\}$, the projective line minus three points, has counting function $N(q) = q-2 = (q-1)-1$ and thus is not torifiable. 

 Another property of an irreducible torified variety is that it is rational because it contains a dense open subscheme that is isomorphic to a split torus and split tori themselve are rational varieties. For instance, elliptic curves cannot be torified.

 It is worth noting that there are examples of rational varieties whose counting function is a polynomial in $q-1$ with nonnegative integral coefficients that cannot be torified. For instance, one might consider the complex cone $K(\C)=\{ (x,y,z)\mid z^2=xy\}$ in $\A^3(\C)$, which is already defined over $\Z$. It has counting function $N_K(q)=q^2$. The complement $Y(\C)=\A^3(\C)\setminus K(\C)$ is also defined over $\Z$ and has counting function $N_Y(q)=q^3-q^2=(q-1)^3+2(q-1)^2+(q-1)$. Since it is dense and open in $\A^3\subset\mathbb{P}^3$, it is a rational variety. If $Y$ was torifiable, it would yield an embedding $\Gm^3(\C)\hookrightarrow Y(\C)$, which extends to an automorphism of $\mathbb{P}^3(\C)$, and the inverse map would establish an embedding of $K$ into $\mathbb{P}^2$, which does not exist. 
\end{remark}


\section{Connes and Consani's geometry}
 \label{section_CC-gadgets}

\subsection{CC--gadgets and CC--varieties}
Let us start this section by recalling some definitions from \cite{Connes2008}. 

	\begin{definition}
		A \dtext{(Connes-Consani) gadget} over $\Fun$ (CC--gadget for short) is a triple $X=(\underline{X}, X_\C, \ev_X)$ where
			\begin{itemize}
				\item $\underline{X}:\mathcal{F}_{ab}\to \Sets$ is a functor from the category of finite abelian groups to the category of sets,
				\item $X_\C$ is a variety over $\C$ and
				\item $\ev_X:\underline{X}\Longrightarrow \Hom(\Spec\C[-],X_\C)=X_\C(\C[-])$ is a natural transformation.
			\end{itemize}
		We say that a gadget $X$ is \dtext{finite} if $\underline{X}(D)$ is finite for all finite abelian groups $D$, and that it is \dtext{graded} if $\underline{X}=\bdu_{l\geq 0}\underline{X}^{(l)}$ is a graded functor.	
			
		A \dtext{morphism of CC--gadgets} $\vphi:(\underline{X},X_\C,\ev_X)\lto (\underline{Y},Y_\C,\ev_Y)$ consists of a pair $(\underline{\vphi}, \vphi_\C)$ where
		\begin{itemize}
			\item $\underline{\vphi}:\underline{X}\Longrightarrow \underline{Y}$ is a natural transformation and
			\item $\vphi_\C:X_\C\lto Y_\C$ is a morphism over $\C$
		\end{itemize}
		such that for all finite abelian groups $D$ the diagram
		\[
			\xymatrix{
				\underline{X}(D) \ar[rr]^{\underline{\vphi}(D)} \ar[d]_{\ev_X(D)} && \underline{Y}(D) \ar[d]^{\ev_Y(D)}\\
				X_\C(\C[D]) \ar[rr]_{\vphi_\C(\C[D])} && Y_\C(\C[D])
			}
		\]
		commutes.
		
		A morphism of gadgets $\vphi$ is an \dtext{immersion} if for every finite abelian group $D$ the map $\underline{\vphi}(D)$ is injective, and $\vphi_\C$ is an immersion of schemes.
	\end{definition}

	We will say that a CC--gadget is affine, projective, irreducible, et-cetera, if $X_\C$ is so.

	\begin{definition}\label{definition_CC-variety}
		Given a reduced scheme $X$ of finite type over $\Z$, we define the \dtext{CC--gadget $\mathcal{G}(X)$ associated to $X$} by $\mathcal{G}(X):=(\underline{X},X_\C, \ev_X)$, where
			\begin{itemize}
				\item $\underline{X}(D):=\Hom(\Spec \Z[D], X)=X(\Z[D])$ for every $D$,
				\item $X_\C:=X\otz \C$ and
				\item $\ev_X:X(\Z[-])\Longrightarrow X_\C(\C[-])$ is given by extension of scalars. 
			\end{itemize}
		A morphism of schemes $\vphi:X\to Y$ induces a morphism of CC--gadgets $\mathcal{G}(\vphi):\mathcal{G}(X)\to \mathcal{G}(Y)$ defined by $\mathcal{G}(\vphi):=(\underline{\vphi},\vphi_\C)$, where
			\begin{itemize}
				\item $\underline{\vphi}=\vphi^\ast$ is the pullback by $\vphi$, i.e. for all $f:\Spec \Z[D]\to X$ we set $\underline{\vphi}(f):=\vphi\circ f$.
				\item $\vphi_\C:=\vphi \otz \C$ is the complexification of $\vphi$.
			\end{itemize}
  A finite graded CC--gadget $X=(\underline{X}, X_\C, \ev_X)$ is an \dtext{affine variety over $\Fun$} in the sense of Connes-Consani if there is a reduced affine scheme $X_\Z$ of finite type over $\Z$ and an immersion $i:X\to \mathcal{G}(X_\Z)$ such that for all affine reduced schemes $V$ of finite type over $\Z$ and all morphisms of CC--gadgets $\psi:X\to \mathcal{G}(V)$, there is a unique morphism $\vphi:X_\Z\to V$ of schemes such that the diagram
   \[
    \xymatrix{
     X \ar[r]^i \ar[dr]_{\psi} & \mathcal{G}(X_\Z) \ar@{-->}[d]^{\mathcal{G}(\vphi)}\\
      & \mathcal{G}(V)
    }
   \]
  commutes. If $X=(\underline{X},X_\C, \ev_X)$ is an affine variety over $\Fun$, we say that $X_\Z$ is the \dtext{extension of scalars of $X$ to $\Z$}, and we write $X_\Z =: X\otf \Z$. By Yoneda's lemma, $X_\Z$ is unique up to unique isomorphism.
\end{definition}

Note that we have substituted ``variety over $\Z$'' of the original definition in \cite{Connes2008} by ``reduced scheme of finite type over $\Z$''.
It is however not an issue to abandon the restraint of reducibility (in accordance with Soul\'e's convention), 
also cf.\ Remark \ref{remark:S_to_CC_problems}.

 If we have a morphism of CC--gadgets $\vphi=(\underline{\vphi},\vphi_\C):X\to Y$, and $X$ and $Y$ are affine varieties over $\Fun$, then the universal property of $Y$ yields an immersion $i_Y:Y\to \mathcal{G}(Y_\Z)$. Hence, we get a morphism $i_Y\circ \vphi:X\to \mathcal{G}(Y_\Z)$. By the universal property of $X$, we obtain a morphism $\vphi_\Z:X_\Z\to Y_\Z$ of schemes. We will write $\vphi_\Z=:\vphi\otf \Z$, and say that $\vphi_\Z$ is the \dtext{extension of scalars of $\vphi$ to $\Z$}.

 We shall restrict in this work to the class of varieties over $\F_1$ whose functor represents the counting function of the base extension to $\Z$, as explained below.
 
\begin{definition}
 An affine variety $X=(\uline X, X_\C,\ev_X)$ over $\F_1$ is called an \dtext{affine CC-variety} if for every prime power $q$ and every abelian group $D$ of cardinality $q-1$, we have $\#\uline X(D)=\# X_\Z(\F_q)$; when this latest property is satisfied, we say that the functor $\uline{X}$ \emph{represents the counting function} of $X_\Z$, by what we simply mean that it counts the right number of points.
\end{definition}

 We transfer Connes and Consani's definition of a  variety over $\F_1$ (cf.\ \cite[para.\ 3.4]{Connes2008} for an explanation on how to go from the affine to the general case) to this restricted class. This yields a class of $\Fun$--varieties whose functor represents its counting function (in the same sense as for affine $\Fun$-vaieties). It was suggested by Connes and Consani themselve (\cite[Section 3]{Connes2008}) that this would be a meaningul restriction. We need this restriction to construct in section \ref{subsection:CC_to_S} the functor $\FCCS$ from Connes-Consani's $\Fun$-geometry to Soul\'e's $\Fun$-geometry. We will see, however, that this restricted class contains all examples from \cite{Connes2008}.

\begin{definition}
 Let $X=(\underline{X},X_\C,\ev_X)$ and $U=(\underline{U},U_\C,\ev_U)$ be finite graded CC--gadgets over $\Fun$.
 A \dtext{graded morphism} is a morphism $(\uline\vphi,\vphi_\C):U\to X$ such that $\uline\vphi(D)$ restricts to
 a map $\uline\vphi^{(l)}(D):\uline U^{(l)}(D)\to \uline X^{(l)}(D)$ for every $l\geq 0$ and every finite abelian group $D$.
 A morphism $(\uline\vphi,\vphi_\C):U\to X$ of finite graded CC--gadgets is called an \dtext{open immersion}
 if it is a graded immersion such that $\vphi_\C: U_\C\to X_\C$ is an open immersion and if
 $$ \uline\vphi(\underline{U}(D)) \ = \ \{x\in \underline{X}(D)|\ \im(\ev_X(x))\subseteq U_\C \}\;. $$
 If such an open immersion is fixed, $U$ is called an \dtext{open CC--subgadget} of $X$.
 
 An open affine cover of $X$ is a family $\{U_i\}_{i\in I}$ of open affine CC--subgadgets such that 
 $\bigcup_{i\in I}\uline{U_i}(D)=\uline X(D)$ and $\{U_{i,\C}\}$ is an open affine cover of $X_\C$.
 A \dtext{CC--variety} is a finite graded CC--gadget $X$ that has an open affine cover by affine CC--varieties.

 If $U$ is an open CC--subgadget of a CC--variety $X$ that is a CC--variety itself, we call $U$ an \dtext{open CC--subvariety} of $X$.
 Let $\{U_j\}$ be the family of all open CC--subvarieties of $X$.
 The \dtext{extension of scalars} (or the \dtext{base extension}) of $X$ from $\F_1$ to $\Z$ is the
 direct limit over the family $\{U_{j,\Z}\}$ relative to all canonical inclusions,
 and it is denoted by $X_\Z=X\otimes_\Fun\Z$.

 \label{def_morphism_of_CC-varieties}
 A \dtext{morphism of CC--varieties} $\varphi:X\to Y$ is a gadget morphism between CC--varieties $X$ and $Y$ such that the family $\{X_j\}_{j\in J}$ of all open affine CC--subvarieties whose images $Y_j$ under $\varphi$ is affine covers $X$.
\end{definition}

Note that in section \ref{subsection:CC_to_S}, we will restrict the morphisms between CC-varieties in order to compare Connes-Consani's notion with the one of Soul\'e. 

Further note that the intersection of two open CC--subvarieties of a given CC--variety $X$ is again a CC--subvariety.
 This implies that the functor of $X$ represents the counting function of $X_\Z$.
 Further, we have a base extension to $\Z$ for morphisms between CC--varieties. 
 More precisely, Lemma \ref{lemma_univ_prop} holds, mutatis mutandis, for CC--varieties.


\subsection{Affinely torified varieties as CC--varieties}
\label{subsection:torified_CC-varieties}
Let $(X,T)$ be a torified variety. We define a CC--gadget $\mathcal{L}(X,T):=(\underline{X},X_\C, \ev_X)$ over $\Fun$ consisting of the following data:
	\begin{itemize}
		\item The graded functor $\underline{X}= \{\underline{X}^{(l)}\}_{l\geq 0}$ defined by
			\begin{eqnarray*}
				\underline{X}^{(l)} :\mathcal{F}_{ab} & \lto & \Sets \\
				D & \lmto & \bdu_{i\in I^{(l)}} \Hom(A_i, D)
			\end{eqnarray*}
		for every $l\geq0$, where $I^{(l)}=\{i\in I\mid \dim T_i=l\}$ and $A_i:=\Hom_{alg-gr}(T_i,\Gm)$.
		\item The complex variety $X_\C:=X\otz \C$.
 		\item For every $i\in I$, the evaluation 
			\[
				\ev_X(D): \Hom(A_i,D) \hookrightarrow \Hom(\C[A_i],\C[D])\subset \Hom(\Spec \C[D],X_\C).
			\]
	\end{itemize}

The following is a well known result in the theory of algebraic groups (cf. \cite[section 1.5]{Borel1991} or \cite[section 1.4]{Waterhouse}).

\begin{proposition}
	Let $G=\Spec R$ and $T=\Spec S$ be affine algebraic groups. The coordinate rings $R$ and $S$ are Hopf algebras, and we have the equality
		\[
 			\Hom_{alg-gr}(G,T)=\Hom_{Hopf}(S,R).
		\]
\end{proposition}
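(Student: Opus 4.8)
The plan is to reduce the stated equality to the standard fact that, for affine algebraic groups $G=\Spec R$ and $T=\Spec S$ over a fixed base, a morphism of schemes $\Spec R\to\Spec S$ is a morphism of algebraic groups precisely when the corresponding ring homomorphism $S\to R$ is compatible with the Hopf structures. First I would unwind the definitions on both sides. By the anti-equivalence between affine schemes and commutative rings, $\Hom_{\mathrm{sch}}(G,T)$ is in canonical bijection with $\Hom_{\mathrm{alg}}(S,R)$, with a scheme morphism $f:G\to T$ corresponding to $f^\#:S\to R$. What has to be shown is that under this bijection the subset $\Hom_{alg\text{-}gr}(G,T)$ on the left corresponds exactly to the subset $\Hom_{\mathrm{Hopf}}(S,R)$ on the right.

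The key step is to translate each piece of the group structure diagrammatically and dualize. A morphism of algebraic groups is an algebra morphism $f$ such that the multiplication square
\[
	\xymatrix{
		G\times G \ar[r]^{m_G} \ar[d]_{f\times f} & G \ar[d]^{f} \\
		T\times T \ar[r]_{m_T} & T
	}
\]
commutes (compatibility with the unit and the inverse then follows automatically for group schemes, or can be imposed separately). Applying $\Spec$, equivalently taking coordinate rings, and using that the coordinate ring of $G\times G$ is $R\otimes R$ with comultiplication $\Delta_R:R\to R\otimes R$ encoding $m_G$ (and similarly $\Delta_S:S\to S\otimes S$ encoding $m_T$), the square above dualizes to the commuting square
\[
	\xymatrix{
		S \ar[r]^{\Delta_S} \ar[d]_{f^\#} & S\otimes S \ar[d]^{f^\#\otimes f^\#} \\
		R \ar[r]_{\Delta_R} & R\otimes R,
	}
\]
which is exactly the condition that $f^\#$ be a coalgebra morphism, i.e.\ $\Delta_R\circ f^\# = (f^\#\otimes f^\#)\circ\Delta_S$. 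Since $f^\#$ is already an algebra morphism by construction, it is a bialgebra morphism; compatibility with the counits $\epsilon_R,\epsilon_S$ (coming from the identity sections) and with the antipodes $S_R,S_S$ (coming from inversion) is obtained the same way by dualizing the corresponding identity and inverse axioms, so $f^\#$ is in fact a Hopf algebra morphism. Conversely, any Hopf algebra morphism $S\to R$ gives, upon applying $\Spec$, a scheme morphism that respects the group laws by reversing the same diagrammatic argument.

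The main subtlety — more bookkeeping than genuine obstacle — is keeping the dualization of the tensor products straight: one must use the canonical identification $\Spec(R\otimes R)\cong\Spec R\times\Spec R$ and check that the universal property of the fibre product really does send $m_G^\#$ to $\Delta_R$ and $(f\times f)^\#$ to $f^\#\otimes f^\#$, which is where strictness of the scheme-ring anti-equivalence is used. Since this proposition is quoted verbatim from the literature (\cite[section 1.5]{Borel1991}, \cite[section 1.4]{Waterhouse}), I would simply cite those references for the routine verification rather than carry out all the diagram chases in detail; the content for the present paper is only the statement itself, which identifies $\Hom_{alg\text{-}gr}(T_i,\Gm)$ with a set of Hopf algebra maps and hence, in the case $T=\Gm$, with a finitely generated free abelian group of characters.
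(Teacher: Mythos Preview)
Your proposal is correct and in fact goes beyond what the paper does: the paper states this proposition as ``a well known result in the theory of algebraic groups'' and simply cites \cite[section 1.5]{Borel1991} and \cite[section 1.4]{Waterhouse} without giving any proof. Your sketch of the standard anti-equivalence argument is the right one and matches what those references contain, so citing them as you suggest is exactly in line with the paper's treatment.
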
 
	
	In our particular situation if $R=\Z[D]$ and $S=\Z[A]$ are group rings for some abelian groups $A$ and $D$, with Hopf algebra structure given in the usual way, since group-like elements in the Hopf algebra $\Z[A]$ are precisely the elements of $A$, we have 
		\[
			\Hom_{Hopf}(\Z[A],\Z[D])=\Hom(A,D),
		\]
	and we obtain the following consequence:

\begin{corollary}
	Let $A$ be a free abelian group of rank $d$, $\Z[A]$ its group ring with the usual Hopf algebra structure, and $T=\Spec \Z[A]$ the torus of $A$. Then the homomorphism
		\[
			A \lto \Hom_{alg-gr}(T,\Gm),
		\]
	mapping $a\in A$ to the morphism $\vphi_a:T\to \Gm=\Spec\Z[t,t^{-1}]$ defined by $\vphi_a^{\#}(t)=a$, is an isomorphism of algebraic groups.

\end{corollary}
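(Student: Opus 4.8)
The plan is to reduce the statement to a description of the group-like elements of a group ring, using the Proposition that immediately precedes it. First I would apply that Proposition with $G=T=\Spec\Z[A]$ and with the role of its ``$T$'' played by $\Gm=\Spec\Z[t,t^{-1}]$, identifying $\Z[t,t^{-1}]$ with the group ring $\Z[\Z]$ of the infinite cyclic group (with $t$ the basis element attached to $1\in\Z$, so that $\Delta t=t\ot t$ and $\veps(t)=1$). This yields a natural bijection
\[
 \Hom_{alg-gr}(T,\Gm) \;=\; \Hom_{Hopf}(\Z[t,t^{-1}],\Z[A]).
\]
A Hopf algebra morphism out of $\Z[t,t^{-1}]$ is an algebra morphism sending the unit $t$ to some invertible element, and compatibility with comultiplication and counit forces that element to be group-like in $\Z[A]$; conversely every group-like element is automatically a unit (with inverse obtained from the antipode), so such morphisms correspond exactly to group-like elements of $\Z[A]$.

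Next I would record the elementary (and cited) fact that the group-like elements of $\Z[A]$ are precisely the standard basis elements, i.e.\ the image of $A$ under $a\mto e^a$. Concretely, writing a putative group-like element as $x=\sum_{a\in A}c_a e^a$, the equation $\Delta(x)=x\ot x$ reads $\sum_a c_a(e^a\ot e^a)=\sum_{a,b}c_ac_b(e^a\ot e^b)$, hence $c_ac_b=0$ for $a\neq b$ and $c_a^2=c_a$ for all $a$; since $\Z$ is an integral domain, at most one $c_a$ is nonzero and it then equals $1$, and the counit condition $\sum_a c_a=1$ pins down exactly one such $a$. Thus $x=e^a$ for a unique $a\in A$, and the composite bijection above becomes a bijection $\Hom_{alg-gr}(T,\Gm)\;\leftrightarrow\;A$.

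It then remains to check two compatibilities. First, that this bijection is precisely the map $a\mto\vphi_a$ of the statement: unwinding the identifications, the group-like element $e^a$ corresponds to the algebra morphism $\vphi_a^{\#}\colon\Z[t,t^{-1}]\to\Z[A]$ with $\vphi_a^{\#}(t)=e^a$, which is exactly how $\vphi_a$ is defined. Second, that $a\mto\vphi_a$ respects group structures, where $\Hom_{alg-gr}(T,\Gm)$ carries the group law induced by that of $\Gm$: the product $\vphi_a\cdot\vphi_b$ equals $m_{\Gm}\circ(\vphi_a,\vphi_b)\circ\Delta_T$, which on coordinate rings sends $t\mapsto t\ot t\mapsto e^a\ot e^b\mapsto e^ae^b=e^{a+b}$, so $\vphi_a\cdot\vphi_b=\vphi_{a+b}$ (and $\vphi_0$ is the trivial character). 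Hence $a\mto\vphi_a$ is a group isomorphism, as claimed.

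I do not expect any real obstacle here: the argument is essentially formal once the preceding Proposition is in hand, and the only step that requires an actual computation rather than pure bookkeeping is the determination of the group-like elements of $\Z[A]$ — and even that is a one-line coefficient comparison using that $\Z$ has no zero divisors.
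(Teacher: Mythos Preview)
Your proposal is correct and follows essentially the same approach as the paper: the paper derives the corollary immediately from the preceding Proposition together with the observation that the group-like elements of $\Z[A]$ are precisely the elements of $A$, which is exactly what you do. Your write-up is simply more explicit, spelling out the coefficient comparison for group-likes and verifying the compatibility with the group law on $\Hom_{alg-gr}(T,\Gm)$, whereas the paper leaves these as understood.
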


	Using this, the CC--gadget $\mathcal{L}(X,T):=(\underline{X},X_\C, \ev_X)$ can also be defined in an equivalent way by
	\begin{itemize}
		\item $\underline{X}(D):=\bdu_{i\in I} \Hom_{alg-gr}(G,T_i)$, where $G=\Spec \Z[D]$,
		\item $X_{\C} := X\otz \C$,
		\item for every $i\in I$,
			\[
				\ev_X(D):\Hom(G,T_i) \hookrightarrow \Hom(G\otz \C, T_i\otz \C)\subseteq \Hom(G\otz \C, X_{\C}).
			\]
	\end{itemize}

Furthermore, it follows that for every $i\in I$, we have $A_i\simeq\Z^{\dim T_i}$. Fixing these isomorphisms yields
$$ \underline{X}(D) \ = \ \bdu_{i\in I} \Hom(A_i, D) \ = \ \bdu_{i\in I}D^{\dim T_i} \;. $$

\begin{remark}
We recover Connes-Consani's construction for the CC--gadgets base extending to the multiplicative group $\Gm$ and affine space $\A^n$ 
by the use of the obvious torification $\Gm=\Gm$ for the multiplicative group and $\A^1 = \{0\} \du \left(\A^1\setminus \{0\}\right)$ for the affine line,
respectively, the product torification for higher dimensional affine space (cf.\ paragraphs \ref{subsubsection:Gm} and \ref{subsubsection:A^n}). 
Indeed, let $X$ be the multiplicative group or affine space and $T_i=\Spec\Z[A_i]$ be a torus in the torification as described above.
Let $g\in\Hom(A_i,D)$, 
then $\ev_X(D)(g)\in\Hom(\Spec\C[D],X_\C)$ is determined by $\psi:\left(\Spec \C[D]\right)(\C) \to X_\C(\C)$.
For a character 
$$ \chi \ \in \ \Hom(D,\C^\times) \ \simeq \ \Hom (\Spec \C, \Spec \C[D]) \ = \ (\Spec\C[D])(\C) \;, $$
we have $\psi(\chi):=(\tau_i\otz \C)\left( (\chi(g_j)_{j=1,\dotsc,\dim T_i}\right)$ as in Connes-Consani's description.
\end{remark}

\begin{proposition}
	Let $\Phi=(\vphi,\tilde{\vphi}, \{\vphi_i\}):(X,T)\to (X',T')$ be a torified morphism. The mapping $\mathcal{L}(\Phi):\mathcal{L}(X,T)\to \mathcal{L}(X',T')$ given by $\mathcal{L}(\Phi)=(\underline{\vphi}, \vphi_\C)$ where
	\begin{itemize}
		\item for every finite abelian group $D$, we write $G$ for the group scheme $\Spec \Z[D]$ and for each $i\in I$, we set
			\begin{eqnarray*}
				\underline{\vphi}(D):\Hom_{alg-gr}(G,T_i) & \lto & \Hom_{alg-gr}(G,T_{\tilde{\vphi}(i)}'), \\
				\psi & \lmto & \vphi_i\circ \psi
			\end{eqnarray*}
		\item $\vphi_\C:=\vphi \otz \C:X_\C \to X'_{\C}$
	\end{itemize}
	is a morphism of gadgets.
\end{proposition}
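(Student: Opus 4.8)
The plan is to verify that the pair $\mathcal{L}(\Phi)=(\underline{\vphi},\vphi_\C)$ satisfies the two requirements in the definition of a morphism of CC--gadgets: first that $\underline{\vphi}$ is a well-defined natural transformation $\underline{X}\Longrightarrow\underline{X'}$, and second that for every finite abelian group $D$ the evaluation square commutes. The scheme-theoretic component $\vphi_\C=\vphi\otz\C$ is already a morphism over $\C$, so nothing needs to be checked there.

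First I would check that $\underline{\vphi}(D)$ lands where it should. For $\psi\in\Hom_{alg-gr}(G,T_i)$, the composite $\vphi_i\circ\psi$ is a homomorphism of algebraic groups $G\to T'_{\tilde{\vphi}(i)}$, since $\vphi_i$ is a homomorphism of algebraic groups by the definition of a torified morphism; hence it is an element of $\Hom_{alg-gr}(G,T'_{\tilde{\vphi}(i)})$, which is the $\tilde{\vphi}(i)$-component of $\underline{X'}(D)$. Taking the coproduct over $i\in I$ gives a map $\underline{X}(D)\to\underline{X'}(D)$. Naturality in $D$ is then a formal consequence: for a homomorphism $D\to D'$ of finite abelian groups, inducing $G'=\Spec\Z[D']\to G=\Spec\Z[D]$, precomposition by this map commutes with postcomposition by the fixed $\vphi_i$, since composition of morphisms of algebraic groups is associative. (Equivalently, under the identification $\underline{X}(D)=\bdu_i\Hom(A_i,D)$, the map $\underline{\vphi}(D)$ is $\bdu_i(\vphi_i^*:\Hom(A_{\tilde{\vphi}(i)}',D)\to\Hom(A_i,D))$ wait---one must be careful with the variance; I would phrase it using the $\Hom_{alg-gr}(G,T_i)$ description, where composition with $\vphi_i$ on the target side is manifestly natural in $G$, hence in $D$.)

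Next I would check the commuting square relating $\ev_X$ and $\ev_{X'}$. Fix $D$ and $\psi\in\Hom_{alg-gr}(G,T_i)$. Going down then right, $\ev_X(D)(\psi)$ is the composite $G\otz\C\to T_i\otz\C\hookrightarrow X_\C$, and applying $\vphi_\C(\C[D])$ postcomposes with $\vphi\otz\C:X_\C\to X'_\C$. Going right then down, $\ev_{X'}(D)(\underline{\vphi}(D)(\psi))$ is the composite $G\otz\C\xrightarrow{\vphi_i\otz\C}T'_{\tilde{\vphi}(i)}\otz\C\hookrightarrow X'_\C$. These two composites agree precisely because the defining square of a torified morphism, namely $\vphi\circ\tau_i=\sigma_{\tilde{\vphi}(i)}\circ\vphi_i$, holds over $\Z$, and hence remains commutative after applying the base-change functor $-\otz\C$; one then postcomposes both sides with $\psi\otz\C$. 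So the square commutes on the nose.

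The argument is essentially formal, so I do not expect a genuine obstacle; the only point requiring a little care is bookkeeping the variance of the functors and making sure the identification $\underline{X}(D)=\bdu_i\Hom(A_i,D)$ is used consistently with the equivalent description $\underline{X}(D)=\bdu_i\Hom_{alg-gr}(\Spec\Z[D],T_i)$ via the Corollary above, since $\underline{\vphi}$ is most naturally described in the latter picture. I would therefore carry out the whole verification in the $\Hom_{alg-gr}(G,T_i)$ formulation and invoke that Corollary only to know the two descriptions of $\mathcal{L}(X,T)$ coincide.
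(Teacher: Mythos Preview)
Your proposal is correct and follows essentially the same approach as the paper: the paper's proof consists solely of observing that the evaluation square commutes because the defining square $\vphi\circ\tau_i=\sigma_{\tilde{\vphi}(i)}\circ\vphi_i$ of a torified morphism commutes (hence remains commutative after base change to $\C$). Your version is in fact more thorough, since you also spell out why $\underline{\vphi}(D)$ lands in the correct component and why it is natural in $D$, points the paper leaves implicit.
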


\begin{proof}
The pair $(\underline{\vphi}, \vphi_\C)$ is indeed a morphism of CC--gadgets: the diagram
	\[
		\xymatrix{
			\Hom(G,T_i) \ar[rr]^{\underline{\vphi}(D)} \ar[d]_{\ev_{X}(D)} && \Hom(G,T'_{\tilde{\vphi}(i)}) \ar[d]^{\ev_{X'}(D)} \\
			\Hom (G_\C, X_\C) \ar[rr]_{\vphi_\C} && \Hom (G_\C,X'_\C)
		}
	\]
	commutes since
	\[
		\xymatrix{
			X \ar[r] & X' \\
			T_i \ar[u] \ar[r] & T'_{\tilde{\vphi}(i)} \ar[u]
		}
	\]
	commutes.
\end{proof}

\begin{remark}\hfill
 \label{remark_torified_gadgets}
	\begin{enumerate}
		\item The CC--gadget $\mathcal{L}(X,T):=(\underline{X},X_\C, \ev_X)$ is finite and graded.
		\item $\mathcal{L}(X,T)$ depends in a strong way on the torification, as explained in the following.
	\end{enumerate}
\end{remark}
	
		If $S$ is a second torification of $X$ and $\mathcal{L}(X,S)=(\underline{X}',X'_\C,\ev'_{X})$, we know by Proposition \ref{prop:counting} that there is a bijection between $I$ and $I'$ that respects the grading, so we can assume $I=I'$. It is also clear that $X'_\C = X_\C$. An isomorphism of gadgets $\vphi:\mathcal{L}(X,S)\to \mathcal{L}(X,T)$ consists of a pair $(\underline{\vphi},\vphi_\C)$ where $\underline{\vphi}:X'\Rightarrow X$ is a natural transformation and $\vphi_\C:X_\C\to X_\C$ is a morphism of complex varieties such that for all finite abelian groups $D$ the following diagram commutes:
		\begin{equation}\label{eq:diag1}
			\xymatrix{
				\underline{X}'(D)=\bdu_{i\in I}D^{\dim S_i} \ar[rr]^{\underline{\vphi}(D)} \ar[d]_{\ev'_X(D)} && \bdu_{i\in I}D^{\dim T_i} = \underline{X}(D) \ar[d]^{\ev_X(D)} \\
				\Hom(\Spec(\C[D]),X_\C) \ar[rr]^{\vphi_\C(\C[D])} & &\Hom(\Spec(\C[D]),X_\C).
			}
		\end{equation}

	The morphism $\vphi$ can only be an isomorphism if $\underline{X}'(D)\to \underline{X}(D)$ is a bijection for every finite abelian group $D$. In particular, considering the trivial group $D=\{e\}$ yields a bijection
	\[
		\underline{\vphi}(\{e\}): \bdu_{i\in I} \{e\}^{\dim S_i} \lto \bdu_{i\in I} \{e\}^{\dim T_i},
	\]
	which is merely a bijection $\psi: I \to I$. The trivial group homomorphism $D\to \{e\}$ induces, by the naturality of $\underline{\vphi}$, the commutative diagram
	\[
		\xymatrix{
			\underline{X}'(D) \ar[rr]^{\underline{\vphi}(D)} \ar[d] & & \underline{X}(D) \ar[d]\\
			\underline{X}'(\{e\})\cong I \ar[rr]^{\psi} & & I\cong \underline{X}(\{e\})
		}
	\] 
	and a cardinality argument shows that $\psi$ must respect the grading and that $\underline{\vphi}(D)$ maps $D^{\dim T_i}$ into $D^{\dim S_{\psi(i)}}$. Consequently, commutativity of \eqref{eq:diag1} implies that there are maps $T_i(\C)\to S_{\psi(i)}(\C)$ such that the diagrams
	\[
		\xymatrix{
			T_i(\C) \ar[d]_{\tau_i} \ar[rr] & & S_{\psi(i)}(\C) \ar[d]^{\sigma_{\psi(i)}} \\
			X_\C(\C) \ar[rr]^{\vphi_{\C}} & & X_\C(\C)
		}
	\]
	commute for all $i$.

	For instance,
	\[
		\begin{array}{c}
		\xy 
			(0,0)*{\bullet}; 
			(2,0)*{}; (16,0)*{}; **@{-};
			(0,2)*{}; (0,16)*{}; **@{-};
			(2,2)*{}; (16,2)*{}; **@{--};
			(2,2)*{}; (2,16)*{}; **@{--};
			(16,2)*{}; (16,16)*{}; **@{--};
			(2,16)*{}; (16,16)*{}; **@{--};
    		\endxy
		\end{array}
		= \A^2 =
		\begin{array}{c}
		\xy 
			(16,0)*{\bullet}; 
			(0,0)*{}; (14,0)*{}; **@{-};
			(0,2)*{}; (0,16)*{}; **@{-};
			(2,2)*{}; (16,2)*{}; **@{--};
			(2,2)*{}; (2,16)*{}; **@{--};
			(16,2)*{}; (16,16)*{}; **@{--};
			(2,16)*{}; (16,16)*{}; **@{--};
    		\endxy
		\end{array}
	\]
	are two torifications of the affine plane $\A^2$ that give rise to CC--gadgets that by the above reasoning cannot be isomorphic. This illustrates (2).
	
	\begin{theorem} \label{thm_ll}
		Given an affinely torified variety $(X,T)$, the corresponding CC--gadget $\mathcal{L}(X,T)=(\underline{X}, X_\C, \ev_X)$ is a CC--variety over $\Fun$ such that $X\otf \Z\cong X$. If $f:(X,T)\to (X',T)$ is a torified morphism then $\mathcal{L}(f)$ is a morphism of CC--varieties. More precisely, the composition of the functors $\mathcal L$ and $-\otimes_\Fun\Z$ is isomorphic to the functor from affinely torified varieties to schemes that forgets the torification.
	\end{theorem}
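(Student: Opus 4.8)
The plan is to break Theorem \ref{thm_ll} into three assertions and dispatch them in order: first that $\mathcal L(X,T)$ is a CC--variety, second that its base extension to $\Z$ recovers $X$, and third the functoriality/naturality statement. For the first part, I would start locally. Given the affine torification, fix an affine open cover $\{U_j\}$ of $X$ respecting $T$, so $U_j = \decomp_{i\in I_j} T_i$. For each $U_j$ I would show that $\mathcal L(U_j, T|_{U_j})$ is an affine CC--variety: since $U_j$ is an affine torified variety, I need to exhibit the immersion $\mathcal L(U_j)\hookrightarrow \mathcal G(U_j)$ and verify the universal property in Definition \ref{definition_CC-variety}. The natural candidate for the immersion sends $g\in\Hom(A_i,D)=\Hom_{alg-gr}(\Spec\Z[D],T_i)$ to the composite $\Spec\Z[D]\to T_i\hookrightarrow U_j$, i.e. the element of $U_j(\Z[D])$ obtained from the torification immersion $\tau_i$; injectivity for each $D$ follows from Lemma \ref{lemma:decomp} applied to $S=\Spec\Z[D]$ together with injectivity of $\Hom(A_i,D)\to T_i(\Z[D])$. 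The universal property is where the real content sits: given $\psi\colon \mathcal L(U_j)\to\mathcal G(V)$ with $V$ affine reduced of finite type over $\Z$, I must produce a unique scheme morphism $U_j\to V$. I would test $\psi$ on large groups $D=(\Z/m)^r$: the image of the torus $T_i(\Z[D])$-part, compatibly over all $m$, determines a morphism $T_i\to V$ on generic points, and reducedness of $V$ plus the density of these points (together with the complex evaluation data $\ev$ controlling the morphism $X_\C\to V_\C$) forces these to glue to a morphism $U_j\to V$; uniqueness comes from reducedness and Lemma \ref{lemma:decomp} giving density of $\bigcup_D(\bigsqcup T_i)(\Z[D])$-images. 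One also has to check the finiteness and grading hypotheses, but those are immediate from Remark \ref{remark_torified_gadgets}(1) and that the $\Hom(A_i,D)$ are finite.

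Next I would assemble the global statement. I claim the $\mathcal L(U_j)$ are open affine CC--subgadgets of $\mathcal L(X,T)$: the inclusion on functors is $\bigsqcup_{i\in I_j}\Hom(A_i,D)\hookrightarrow\bigsqcup_{i\in I}\Hom(A_i,D)$, which is visibly a graded immersion with $U_{j,\C}\hookrightarrow X_\C$ open; the condition $\underline{U_j}(D)=\{x\in\underline X(D)\mid\im(\ev_X(x))\subseteq U_{j,\C}\}$ holds because a point lands in the torus $T_i$ and $T_{i,\C}\subseteq U_{j,\C}$ exactly when $i\in I_j$ (using that the $U_j$ respect the torification and that a split torus is not contained in a proper closed-complement, so membership is detected on $\C$-points). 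Since $\{U_j\}$ covers $X$ we get $\bigcup_j\underline{U_j}(D)=\underline X(D)$ and $\{U_{j,\C}\}$ covers $X_\C$, so $\{\mathcal L(U_j)\}$ is an open affine cover by affine CC--varieties; hence $\mathcal L(X,T)$ is a CC--variety. For $X\otf\Z\cong X$, I would identify the family of all open CC--subvarieties: each contains some $\mathcal L(U_j)$-piece and base extends compatibly, the $\mathcal L(U_j)$ base extend to $U_j$ by the affine case, and the transition maps on the $\Z$-side are the inclusions $U_j\cap U_{j'}\hookrightarrow U_j$; taking the direct limit over $\{U_{j,\Z}\}$ reconstructs $X$ by the gluing that defines $X$ as a scheme. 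Here I should check the overlaps $U_j\cap U_{j'}$ are themselves affinely torified (they are, being decomposed by $\{T_i\}_{i\in I_j\cap I_{j'}}$, though affineness of the overlap needs the cover to be chosen suitably — I would invoke that one may refine to a cover closed under intersection, or argue directly that the relevant transition data only needs the $\mathcal L$-construction to be compatible with restriction to torus-saturated opens).

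For the morphism statement, given an affinely torified morphism $f\colon(X,T)\to(X',T')$ I already have the gadget morphism $\mathcal L(f)=(\underline\varphi,\varphi_\C)$ from the Proposition above; I must check it is a morphism of CC--varieties in the sense of Definition \ref{def_morphism_of_CC-varieties}, i.e. that the open affine CC--subvarieties with affine image cover $X$. But by definition of affinely torified morphism there is an affine cover $\{U_j\}$ of $X$ respecting $T$ whose images $f(U_j)$ are affine, so the $\mathcal L(U_j)$ do the job, each being an open affine CC--subvariety (by the previous paragraph) mapping into an affine subscheme. Finally, the ``more precisely'' clause asserts $(-\otimes_\Fun\Z)\circ\mathcal L$ is isomorphic to the forgetful functor: object-level this is the isomorphism $X\otf\Z\cong X$ just established, and I need to check it is natural in affinely torified morphisms, i.e. that $f_\Z := \mathcal L(f)\otf\Z$ corresponds to $f$ under these identifications. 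This follows by tracking through the base-extension of a gadget morphism (defined right after Definition \ref{definition_CC-variety}): on each affine chart $U_j$ the map $\mathcal L(f)|_{U_j}$ base extends to the unique scheme morphism $U_j\to X'$ making the universal-property triangle commute, and $f|_{U_j}$ is such a morphism, so by uniqueness they agree; the charts glue to give $f_\Z = f$.

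The main obstacle will be the universal property in the affine case, specifically showing that an arbitrary gadget morphism $\psi\colon\mathcal L(U_j)\to\mathcal G(V)$ is induced by a genuine scheme morphism. The functor $\underline{U_j}$ only ``sees'' the torus points $\bigsqcup_i T_i(\Z[D])$, which are far from all of $U_j(\Z[D])$, so one cannot directly read off a scheme map; the argument must combine (a) density: the images in $X_\C$ of $\bigcup_D\bigsqcup_i T_i(\Z[D])$ are Zariski dense, so the complex component $\psi_\C\colon U_{j,\C}\to V_\C$ is determined and compatible with the algebraic data via $\ev$; (b) a descent/reducedness argument to upgrade the resulting rational-type information over $\Z$ to an actual morphism of reduced affine $\Z$-schemes. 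Getting (b) airtight — essentially that a compatible system of $\Z[D]$-points on a torified variety, together with the complexification, assembles to a $\Z$-morphism — is the crux, and is presumably where the excerpt's proof will spend most of its effort.
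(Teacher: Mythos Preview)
Your overall architecture---prove the affine case first, then glue via the open affine CC--subgadgets, then handle morphisms and naturality---matches the paper exactly, and your treatment of the immersion $\mathcal L(U_j)\hookrightarrow\mathcal G(U_j)$, the open-subgadget verification, the gluing to recover $X$, and the morphism/naturality statements is essentially what the paper does.

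The one substantive divergence is in how you attack the universal property in the affine case. You propose to extract a scheme morphism $U_j\to V$ by probing the functor $\underline{U_j}$ on large finite groups $D=(\Z/m)^r$, assembling torsion-point data into a morphism on generic points, and then invoking density plus the complex evaluation. The paper takes a shorter route: it never tries to build $\vphi$ from the functor data at all. Instead it observes that $\vphi_\C:X_\C\to V_\C$ is \emph{already given} as part of the gadget morphism, so the entire task reduces to showing that $\vphi_\C$ is defined over $\Z$. For this the paper (i) restricts to each irreducible component, where by Lemma \ref{lemma:torificationpoints}(2) there is a unique open torus $T_i\cong\Gm^{d_i}$; (ii) invokes the \emph{known} fact (Connes--Consani, \cite[Sect.~3.1]{Connes2008}) that $\Gm^{d_i}$ is already an affine CC--variety, so $\vphi_\C|_{T_i}$ descends to $\Z$, yielding a rational map $\vphi:X\dashrightarrow V$ over $\Z$; and (iii) upgrades this rational map to a genuine morphism via the concrete ring identity
\[
\O_Z(Z)[\delta^{-1}]\ \cap\ \bigl(\O_Z(Z)\otimes_\Z\C\bigr)\ =\ \O_Z(Z)
\]
inside the function field, where $\delta$ cuts out the complement of the open tori. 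This is precisely the ``(b) descent/reducedness'' step you flagged as the crux, but made completely explicit. Your torsion-point approach is not wrong in spirit, but it is more roundabout and, as stated, does not quite explain how a compatible family of maps on $\Hom(A_i,(\Z/m)^r)$ pins down a $\Z$-morphism without eventually falling back on $\vphi_\C$ anyway; the paper's reformulation (``show $\vphi_\C$ descends'') cuts straight to that point and lets the result for $\Gm^n$ do the heavy lifting.
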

	\begin{proof}
		Start assuming that $X$ is affine. Let $\mathcal{G}(X)$ be the gadget defined by $X$ as in Definition \ref{definition_CC-variety}, and define the immersion
			\[
				i:\mathcal{L}(X,T) \lto \mathcal{G}(X)
			\]
		as follows:
		\begin{enumerate}
			\item For every finite abelian group $D$, the map
				\[
					\underline{i}(D):\underline{X}(D)=\bdu_{i\in I} D^{\dim T_i} \lto \Hom(\Spec\Z[D],X)
				\]
			is defined in the same way as the evaluation map $\ev_X$, using the fact that this map is obtained by extension of scalars. In other words, we have the commutative diagram 
			\begin{equation}\label{eq:diag2}
				\xymatrix@R=0.6pc{
					\underline{X} \ar[rr]^{\ev_X} \ar[dr]_-{\underline{i}} && \Hom(\Spec \C[-],X_\C). \\
					& \Hom(\Spec \Z[-],X) \ar[ur]_{-\otz \C}
				}
			\end{equation}
			It is clear that $\underline{i}(D)$ is injective for every $D$.
			\item The morphism of varieties $i_\C:X_\C\to X_\C$ is the identity.
			\item By the commutativity of \eqref{eq:diag2}, the diagram
				\[
					\xymatrix{
						\underline{X}(D) \ar[rr]^{\underline{i}(D)} \ar[d]_{\ev_X} && \Hom(\Spec \Z(D],X) \ar[d]^{-\otz \C} \\
						\Hom(\Spec \C[D],X_\C) \ar[rr]_{\Id} && \Hom(\Spec \C[D],X_\C)
					}
				\]
			commutes for every finite abelian group $D$.		
		\end{enumerate}
		To verify the universal property, let $V$ be an affine reduced scheme of finite type over $\Z$, and $(\underline{\vphi},\vphi_\C): \mathcal{L}(X,T) \to \mathcal{G}(V)$ a morphism of gadgets. We need to find a morphism $\vphi:X\to V$ of schemes such that the diagram
			\[
				\xymatrix{
					\mathcal{L}(X,T) \ar[r]^{i} \ar[dr]_-{(\underline{\vphi},\vphi_\C)} & \mathcal{G}(X) \ar[d]^{\mathcal{G}(\vphi)} \\
					& \mathcal{G}(V)
				}
			\]
		commutes. Since $\vphi_\C:X_\C\to V_\C=V \otz \C$ is already given, it suffices to prove that there is a morphism $\vphi:X\to V$ such that $\vphi_\C=\vphi\otz \C$, or in other words: we have to show that $\vphi_\C$ is already defined over $\Z$.
		
		The map $\vphi_\C$ is defined over $\Z$ if ${\vphi_\C}_{|Y}$ is defined over $\Z$ for every irreducible component $Y$ of $X$. To each irreducible component of $X$ corresponds a unique open torus $T_i\subseteq Y$, which is the torus that contains the generic point of $Y$ (see Lemma \ref{lemma:torificationpoints}, (2)). Since $T_i=\Gm^{\dim T_i}$ is a CC--variety over $\Fun$ (cf. \cite[Sect. 3.1]{Connes2008}), the map ${\vphi_\C}_{|T_i}$ is defined over $\Z$, and thus $\vphi_\C$ is a rational function with $\vphi_{|Y}:Y\to V$ defined over $\Z$. Consequently, $\xymatrix{\vphi:X \ar@{-->}[r] & V}$ is a rational function defined over $\Z$.
		
		In order to show that $\vphi$	is indeed a morphism, we have to show that for all affine open $Z\subseteq X$ and $U\subseteq V$ such that $\varphi_\C: Z_\C \to U_\C$, and for all $h\in \O_U(U)$, we have $\vphi^{\#}(h)\in \O_Z(Z)$. 
		
		We know that $\vphi^{\#}(h)\in \O_Z\left(Z\cap \left(\bigcup_{i\in I^o}T_i \right)\right)$, where $I^o=\{i\in I|\ T_i\ \text{is open in $X$}\}$. If we denote by $I^{cl}:=I\setminus I^o$, there is some $\delta\in \O_Y(Y)$ such that $Z\cap \left(\bigcup_{i\in I^{cl}} T_i\right)$ is contained in the vanishing set of $\delta$, and thus $\vphi^{\#}(h)\in \O_Z(Z)[\delta^{-1}]$. 
		
		But we also know that $\vphi_\C^{\#}(h)\in \O_{Z_\C}(Z_\C)=\O_Z(Z)\otz \C$. Since 
			\[
				\O_Z(Z)[\delta^{-1}]\cap \left( \O_Z(Z)\otz \C \right) = \O_Z(Z),
			\]
		 where we can consider all sets as subsets of the function field $F(Z_\C)$, we obtain $\vphi^{\#}(h)\in \O_Z(Z)$, proving the desired result. Note that Proposition \ref{prop:counting} implies that $\uline X$ represents the counting function of $X$.

		 For the general case, let $\{U_i\}$ be the collection of all affine open subschemes of $X$ such that $T$ restricts to a torification $T_i$ of $U_i$. Then $X_i=\mathcal{L}(U_i,T_i)$ is an affine CC--variety, and by the definition of a general CC--variety, $X$ is a CC--variety. Furthermore, $\{X_i\}$ is the family of all affine open subschemes of $\mathcal{L}(X,T)$, thus $\mathcal L(X,T)_\Z$ is defined as the direct limit over the family of the $X_{i,\Z}\simeq U_i$, which is nothing else than $X$ itself.

Note that the defining property of a morphism between CC--varieties follows from the fact that torified morphisms between affinely torified varieties are affinely torified (see Lemma \ref{lemma: affinely torified morphisms}). Concerning functoriality, it is clear that for all torified morphisms $f: (X,T)\to (X',T')$ between torified varieties, the diagram
 $$ \xymatrix{X\ar[rr]^f\ar[d]^\sim && X'\ar[d]^\sim\\\mathcal{L}(X,T)_\Z\ar[rr]^{\mathcal L(f)}&&\mathcal{L}(X',T')_\Z} $$
 commutes. This establishes an isomorphism between the composition of $\mathcal L$ and $-\otimes_\Fun\Z$ 
 and the forgetful functor from affinely torified varieties to schemes.
	\end{proof}

\begin{remark}
 In the proof of this theorem, we made only use of the highest degree term $\uline X^{(\dim X)}$ of the functor $\uline X=\{\uline X^{(l)}\}$
 in the proof that $\Gm^{\dim X}$ is an affine CC-variety. 
 This has the following consequence: Let $X$ be a reduced scheme of finite type over $\Z$ with an open affine cover $\{U_i\}$ 
 and with an open subscheme $T$ that is isomorphic to $\Gm^{\dim X}$ such that $T\subset U_i$ for all $i$.
 Define $\uline X(D)=\uline Y_i(D)=D^{\dim X}$, and $X_\C=X\otimes_\Z\C$ and $Y_{i,\C}=U_i\otimes_\Z\C$ for all $i$.
 Define the evaluations $\ev_X$ and $\ev_i$ in the same way as for $\mathcal L$.
 Then the same proof as above shows that $(\uline X,X_\C,\ev_X)$ is a variety over $\F_1$ (in the sense of Connes and Consani, cf.\ \cite[section 3.4]{Connes2008})
 covered by the affine varieties $(\uline Y_i,Y_{i,\C},\ev_i)$ over $\F_1$.
 \end{remark}


\section{Soul\'e's geometry.}\label{section:soule}

\subsection{S--objects and S--varieties}
 In this section we recall some notions of $\Fun$-geometry introduced by Soul\'e in \cite{Soule2004}, reformulated as in \cite[Section 2.2]{Connes2008}.

\begin{definition}
  Let $\mathcal{R}$ the category of commutative rings which are finite and flat as $\Z$--modules. A \dtext{(Soul�) gadget} (S--gadget for short) 
  over $\Fun$ is a triple $X=(\underline{X},\mathcal{A}_X,e_X)$ consisting of
  \begin{itemize}
   \item a functor $\underline{X}:\mathcal{R}\to \Sets$,
   \item a complex algebra $\mathcal{A}_X$,
   \item a natural transformation $e_X:\underline{X}\Longrightarrow \Hom (\mathcal{A}_X, -\otz \C)$. 
  \end{itemize} 
  An S--gadget $X$ is \dtext{finite} if for all $R\in \mathcal{R}$ the set $\underline{X}(R)$ is finite. A \dtext{morphism} $\vphi:X\to Y$ of 
  S--gadgets is a couple $(\underline{\vphi}, \vphi^\ast)$, where $\underline{\vphi}:\underline{X}\Rightarrow \underline{Y}$ is a 
  natural transformation and $\vphi^\ast:\mathcal{A}_Y\to \mathcal{A}_X$ is morphism of algebras such that
  \[\xymatrix{\underline{X}(R) \ar[rr]^{\underline{\vphi}(R)} \ar[d]_{e_X(R)} && \underline{Y}(R) \ar[d]^{e_Y(R)} \\
	      \Hom(\mathcal{A}_X, R\otz\C) \ar[rr]^{\vphi^\ast(R\otimes_\Z\C)} && \Hom(\mathcal{A}_Y, R\otz\C)}	\]
  commutes for all $R\in \mathcal{R}$. If $\vphi^\ast$ is injective and $\underline{\vphi}(R)$ is injective for all $R\in \mathcal{R}$, 
  we say that $\vphi$ is an \dtext{immersion}. 
\end{definition}
	
 We can associate a gadget $\mathcal{T}(V)=(\underline{V},\mathcal{O}_{V_\C}(V_\C), e_V)$ to any scheme of finite type $V$ over $\Z$, 
 where $\underline{V}(R):=\Hom(\Spec R, V)$ is the functor of points, $\mathcal{O}_{V_\C}(V_\C)$ the algebra of global sections 
 of the complexification of $V$, and $e_V$ is the extension of scalars to $\C$. 
 For a morphism $f: U\to V$, we define $\mathcal T (f):\mathcal T(U)\to\mathcal T(V)$ as the pair $(\uline f,f_\C^\#)$
 where $\uline f(R):\uline U(R)\to \uline V(R)$ is the induced morphism on sets of points and $f_\C^\#:\mathcal A_V\to\mathcal A_U$ 
 is the complexification of the morphism between global sections. It is immediate that $\mathcal T(f)$ is a morphism.
 Thus $\mathcal T$ is a functor from schemes of finite type over $\Z$ to S--gadgets.

\begin{definition}
 An \dtext{affine (Soul\'e) variety over $\Fun$} (affine S--variety for short) is a finite S--gadget $X$ 
 such that there is an affine scheme $X_\Z$ of finite type over $\Z$ and an immersion of gadgets $i_X:X\to \mathcal{T}(X_\Z)$ 
 satisfying the following universal property:
 For every affine scheme $V$ of finite type over $\Z$ and every morphism of S--gadgets $\vphi:X\to \mathcal T(V)$ there is a unique morphism of schemes $\vphi_\Z:X_\Z\to V$ 
 such that $\vphi=\mathcal{T}(\vphi_\Z)\circ i_X$.
\end{definition}
 
 We define the category of affine S--varieties as the full subcategory of S--gadgets whose objects are affine S--varieties.
 The universal property defines the base extension functor from affine S--varieties to affine schemes over $\Z$.
 Namely, it sends $X$ to $X_\Z$ and a morphism $\vphi: X\to Y$ to $(i_Y\circ\vphi)_\Z: X_\Z \to Y_\Z$ (cf.\ Lemma \ref{lemma_univ_prop} below).

 By \cite[Proposition 2]{Soule2004}, the functor $R\mapsto \mathcal{T}(\Spec(R))$ 
 is a fully faithful embedding of the category $\mathcal{R}^{op}$ into the category of affine S--varieties.
 
\begin{definition}
 An \dtext{(Soul\'e) object over $\Fun$} (S--object) is a triple $X=(\uuline{X}, \mathcal{A}_X, e_X)$ consisting of
 \begin{itemize}
  \item a \emph{contravariant} functor $\uuline{X}:\{\textrm{Affine S--varieties}\} \to \Sets$,
  \item a complex algebra $\mathcal{A}_X$,
  \item a natural transformation $e_X:\uuline{X}\Longrightarrow \Hom (\mathcal{A}_X, \mathcal A_{(-)})$.
 \end{itemize}
 An S--object is \dtext{finite} if $\uuline{X}(\mathcal{T}(\Spec R))$ is finite for all $R\in \mathcal{R}$. 
 A \dtext{morphism of objects} $\vphi:X\to Y$ is given by a natural transformation $\uuline{\vphi}:\uuline{X}\Rightarrow \uuline{Y}$ 
 and a morphism of algebras $\vphi^\ast:\mathcal{A}_Y\to \mathcal{A}_X$ such that
  \[ 
   \xymatrix{
    \uuline{X}(V) \ar[rr]^{\uuline{\vphi}(V)} \ar[d]_{e_X(V)} && \uuline{Y}(V) \ar[d]_{e_Y(V)} \\
    \Hom(\mathcal{A}_X,\mathcal{A}_V) \ar[rr]^{\vphi^\ast(V\otimes_\Z\C)} && \Hom(\mathcal{A}_Y,\mathcal{A}_V)
   }
  \]
 commutes for all $V\in \mathcal{A}$. If $\vphi^\ast$ and $\uuline{\vphi}(V)$ are injective for all $V\in \mathcal{A}$, 
 then we say that $\vphi$ is an \dtext{immersion of objects}.
\end{definition}
 
 We can associate an object $\mathcal{O}b(S)=(\uuline{S},\mathcal{O}_{S_\C}(S_\C),e_S)$ to any scheme $S$ of finite type over $\Z$
 via $\uuline{S}(V):=\Hom(V_\Z,S)$ and evaluation $e_S(x)$ defined by the composition 
 \[
  \xymatrix{
   \mathcal{O}_{S_\C}(S_\C) \ar[r]^{x^\ast} & \mathcal{O}_{V_\C}(V_\C) \ar[r]^{i^\ast} & \mathcal{A}_V
  }.
 \]  
   
\begin{definition}
 A \dtext{(Soul\'e) variety over $\Fun$} (S--variety) is a finite S--object $X$ for which there exists a scheme $X_\Z$ of finite type over $\Z$ 
 and an immersion $i:X\to \mathcal{O}b(X_\Z)$ such that for every scheme $V$ of finite type over $\Z$ 
 and every morphism of objects $\vphi:X\to \mathcal{O}b(V)$, 
 there is a unique morphism of schemes $\vphi_\Z:X_\Z\to \mathcal{O}b(V)$ such that $\vphi= \mathcal{O}b(\vphi_\Z)\circ i$. 
\end{definition}

 We define the category of S--varieties as the full subcategory of S--objects whose objects are S--varieties.
 An S--gadget can be considered as an S--object in the following way. If $X=(\uline X,\mathcal A_X,e_X)$ is an S--gadget, 
 then the associated S--object is $(\uuline X, \mathcal A_X,e'_X)$, where for an affine S--variety A, 
 $$ \uuline X(A) \ = \ \Hom(A,X) $$ 
 and $e'_X$ sends $\vphi=(\uline\vphi,\vphi^\ast)\in\uuline X(A)$ to $\vphi^\ast: \mathcal A_X \to \mathcal A_A$.
 This defines a fully faithful functor from S--gadgets to S--objects.
 The essential image (i.e. the smallest subcategory that contains all isomorphisms with an object of the image) of the category of affine S--varieties is the full subcategory of S--varieties whose objects base extend to an affine scheme over $\Z$
 (cf.\ \cite[section 4.2, Prop.\ 3]{Soule2004}).

 An immediate observation following from the definition of an S--variety is the following.
 
\begin{lemma} \label{lemma_univ_prop}
Let $X$ be an S--variety and $V$ a scheme of finite type over $\Z$.
 \begin{enumerate}
  \item The map $\vphi\mapsto\vphi_\Z$ given by the universal property of $X$ defines a bijection 
   \[
    \Hom(X,\mathcal{O}b(V)) \ \lto \ \Hom(X_\Z,V)\;.
   \]
  \item If $\iota: Y\hookrightarrow\mathcal{O}b(V)$ is an immersion of S--objects, then $\vphi\mapsto(\iota\circ\vphi)_\Z$ defines an embedding 
   \[
    \Hom(X,Y) \ \longrightarrow \ \Hom(X_\Z,V)\;.
   \] 
    In particular, if $X$ and $Y$ are both S--varieties, then $\Hom(X,Y)\hookrightarrow\Hom(X_\Z,Y_\Z)$. 
 \end{enumerate}
\end{lemma}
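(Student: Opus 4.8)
The plan is to derive both statements formally from the universal property defining an S--variety, using that $\mathcal{O}b$ is a functor with $\mathcal{O}b(\id)=\id$ that respects composition.

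\textbf{Part (1).} First, the map $\vphi\mapsto\vphi_\Z$ is well defined: by the universal property of the S--variety $X$, with structural immersion $i_X:X\hookrightarrow\mathcal{O}b(X_\Z)$, every $\vphi\in\Hom(X,\mathcal{O}b(V))$ determines a unique scheme morphism $\vphi_\Z:X_\Z\to V$ with $\vphi=\mathcal{O}b(\vphi_\Z)\circ i_X$, which is by definition the base extension of $\vphi$. Injectivity is then immediate, since $\vphi_\Z=\psi_\Z$ forces $\vphi=\mathcal{O}b(\vphi_\Z)\circ i_X=\mathcal{O}b(\psi_\Z)\circ i_X=\psi$. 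For surjectivity, given a scheme morphism $f:X_\Z\to V$, set $\vphi:=\mathcal{O}b(f)\circ i_X$, which is a morphism of S--objects; since $f$ itself satisfies the defining equation $\vphi=\mathcal{O}b(f)\circ i_X$, the uniqueness clause of the universal property forces $\vphi_\Z=f$, so $f$ lies in the image. Hence $\vphi\mapsto\vphi_\Z$ is a bijection.

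\textbf{Part (2).} I would factor the map in question as
\[
\Hom(X,Y)\ \xrightarrow{\iota\circ(-)}\ \Hom(X,\mathcal{O}b(V))\ \xrightarrow{\ \sim\ }\ \Hom(X_\Z,V),
\]
the second arrow being the bijection from part (1). It then suffices to show that post-composition with the immersion $\iota$ is injective. Suppose $\iota\circ\vphi=\iota\circ\psi$. On the functor component this gives $\uuline\iota\circ\uuline\vphi=\uuline\iota\circ\uuline\psi$, and injectivity of each $\uuline\iota(A)$ (for $A$ an affine S--variety) yields $\uuline\vphi=\uuline\psi$. On the algebra component it gives $\vphi^\ast\circ\iota^\ast=\psi^\ast\circ\iota^\ast$ with $\iota^\ast$ injective, and using the definition of a morphism of S--objects one obtains $\vphi^\ast=\psi^\ast$; hence $\vphi=\psi$. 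The ``in particular'' clause is the special case $V=Y_\Z$, $\iota=i_Y$: part (1) applied to $Y$ identifies the target with $\Hom(X_\Z,Y_\Z)$, and $(i_Y\circ\vphi)_\Z$ is the base extension $\vphi_\Z$.

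The manipulations in part (1) are routine Yoneda bookkeeping, so the point that needs genuine care is the injectivity of $\iota\circ(-)$ on the \emph{algebra} component in part (2): injectivity of $\iota^\ast$ by itself only shows that $\vphi^\ast$ and $\psi^\ast$ agree on the image of $\iota^\ast$, so one must also invoke the compatibility of $\vphi^\ast$ with the evaluations $e_X$ (together with $\uuline\vphi=\uuline\psi$), i.e.\ the fact that an immersion of S--objects is a monomorphism, in order to conclude. For the ``in particular'' case this subtlety can be sidestepped: there one may argue entirely on the level of schemes, where immersions are monomorphisms, combined with the functoriality of base extension established in part (1).
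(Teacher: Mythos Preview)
The paper gives no proof of this lemma at all; it is introduced as ``an immediate observation following from the definition of an S--variety,'' so there is nothing to compare against on the paper's side. Your argument for part (1) is the standard Yoneda bookkeeping and is correct.

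For part (2), you have correctly isolated the one genuinely nontrivial point: from $\vphi^\ast\circ\iota^\ast=\psi^\ast\circ\iota^\ast$ with $\iota^\ast$ merely \emph{injective}, one cannot cancel on the right to conclude $\vphi^\ast=\psi^\ast$ --- that would require $\iota^\ast$ to be epi. Your proposed fix (``invoke compatibility with the evaluations $e_X$'') is gestured at rather than carried out, and in fact it is not clear that the evaluations supply enough test elements in general (consider the case $\mathcal A_{\mathcal Ob(V)}=\C$, e.g.\ $V$ projective, where $\iota^\ast$ is just the structure map and gives no information). So the assertion that ``an immersion of S--objects is a monomorphism'' remains unproved in your write-up. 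That said, you are right that the ``in particular'' case --- which is the only form actually used later in the paper --- can be handled cleanly: when $Y$ is itself an S--variety one may apply part (1) to both $X$ and $Y$ and argue via the resulting scheme morphisms, where immersions of schemes are monomorphisms. Since the paper offers no proof and you have both supplied the routine parts and honestly flagged the delicate one, your treatment is already more careful than the original.
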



\subsection{Smooth toric varieties as S--varieties}
\label{toric_S-objects}
 Soul\'e describes in \cite[section 5.1]{Soule2004} an S--object $\mathcal S(X)$ associated to a toric variety $X$.
 Note that this association works for arbitrary toric varie\-ties,
 though Soul\'e proves only for smooth toric varieties $X$ that $\mathcal S(X)$ is an S--variety.
 Further note that we are working with an different complex algebra than Soul\'e does,
 but that results transfer by \cite[Prop.\ 4]{Soule2004}.
 Given a toric variety $X$ with fan $\Delta$, 
 we define the S--object $\mathcal S(X)$ in two steps.

 In the first step, we define for every cone $\tau\in\Delta$ the S--gadget $X_\tau=(\uline X_\tau,\mathcal A_{X_\tau}, e_{X_\tau})$
 as follows. Let $A_\tau=\tau^\vee\cap(\Z^n)^\vee$ be as in paragraph \ref{section_toric_as_torified}. Let $\mu(R)$ be the roots of unity of the ring $R$.
 For every $R\in\mathcal R$, put $\uline X_\tau(R)=\Hom(A_\tau,\mu(R)_0)$, 
 the set of semi-group homomorphisms from $A_\tau$ to 
 the multiplicative semi-group $\mu(R)_0=\{0\}\cup\mu(R)$.
 Put $\mathcal A_{X_\tau}=\C[A_\tau]$ and let 
 $$ e_{X_\tau}(R): \ \uline X_\tau(R) = \Hom(A_\tau,\mu(R)_0) \ \longrightarrow \ \Hom(\C[A_\tau],R\otimes_\Z\C) $$
 be the natural map.
 
 For $U_\tau=\Spec\Z[A_\tau]\subset X$, we have a canonical morphism of S--gadgets $\iota_\tau: X_\tau\hookrightarrow\mathcal T(U_\tau)$,
 which is an immersion since the complex algebras are the same and since for every $R\in\mathcal R$, we have 
 $$ \uline X_\tau(R) \ = \ \Hom(A_\tau,\mu(R)_0) \ \subset \ \Hom(\Z[A_\tau],R) \ = \ \uline U_\tau(R) \;. $$
 Consequently, the universal property of an affine S--variety $V$ with immersion $\iota_V:V\to\mathcal T(V_\Z)$
 implies that given a morphism $\varphi: V\to X_\tau$ of S--gadgets there is a unique morphism $\varphi_\Z$ such that
 $\iota_\tau\circ\varphi=\mathcal T(\varphi_\Z)\circ\iota_V$. 
 By Lemma \ref{lemma_univ_prop}, we obtain inclusions
 $$ \Hom(V,X_\tau)\subset\Hom(V_\Z,U_\tau)\subset\Hom(V_\Z,X) \;. $$

 In the second step, we define the S--object $\mathcal S(X)=(\uuline X,\mathcal A_X,e_X)$ as follows.
 For every affine S--variety $V$, put 
 $$ \uuline X(V)=\bigcup_{\tau\in\Delta} \Hom(V,X_\tau), $$
 where the union is taken in $\Hom(V_\Z,X)$.
 Put $\mathcal A_X=\O_{X_\C}(X_\C)$, where $X_\C=X\otimes_\Z\C$, and let $e_X(V):\uuline X(V)\subset\Hom(V_\Z,X)\to\Hom(\mathcal A_X,\mathcal A_V)$
 be the natural map.
 
 In a natural way, $\mathcal S$ extends to a functor from toric varieties to S--objects. 
 Given a toric morphism $f:X\to X'$ that is induced by a morphism of cones $\delta:\Delta\to\Delta'$ (see paragraph \ref{section_toric_as_torified}),
 then following the constructions of the first step yields morphisms of S--gadgets $f_\tau: X_\tau\to X'_{\delta(\tau)}$ for every $\tau\in\Delta$. 
 In the second step, taking the union over all cones $\tau\in\Delta$ defines a morphism $\mathcal S(f):\mathcal S(X)\to\mathcal S(X')$.

 As a consequence of \cite[Thm.\ 1(i)]{Soule2004} and \cite[Prop.\ 4]{Soule2004}, we obtain the following result.

\begin{theorem}[Soul\'e] \label{thm_soule}
 Let $X$ be a smooth toric variety. Then the S--object $\mathcal S(X)$ is an S--variety such that $X\simeq\mathcal S(X)\otimes_{\F_1}\Z$.
\end{theorem}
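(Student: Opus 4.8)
The statement is, up to the choice of complex algebra, Soul\'e's Theorem 1(i) in \cite{Soule2004}, so the plan is to deduce it from that result and to account for the change of complex algebra. We work with $\O_{X_\C}(X_\C)$ where Soul\'e attaches a different complex algebra to the toric datum; by \cite[Prop.\ 4]{Soule2004} the two choices yield equivalent categories of S-gadgets, S-objects and S-varieties, so it suffices to prove Theorem \ref{thm_soule} in Soul\'e's conventions. The rest of the argument has two layers: an affine core, and a formal gluing step.

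First I would dispose of the affine case, namely that for every cone $\tau\in\Delta$ the S-gadget $X_\tau$ of paragraph \ref{toric_S-objects} is an affine S-variety with $X_{\tau,\Z}\cong U_\tau$. Smoothness of $X$ means every $\tau$ is generated by part of a $\Z$-basis of $(\Z^n)^\vee$, hence $A_\tau\cong\N^k\times\Z^{n-k}$ and $U_\tau\cong\A^k_\Z\times\Gm^{n-k}_\Z$; for such $\tau$ one has $\uline X_\tau(R)=\Hom(A_\tau,\mu(R)_0)$, which equals $\mu(R)_0^k\times\mu(R)^{n-k}$, and the canonical map $\iota_\tau:X_\tau\hookrightarrow\mathcal T(U_\tau)$ of paragraph \ref{toric_S-objects} is an immersion. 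The universal property to be checked is: for every scheme $V$ of finite type over $\Z$ and every morphism $(\uline\varphi,\varphi^\ast):X_\tau\to\mathcal T(V)$ of S-gadgets there is a unique morphism of schemes $\varphi_\Z:U_\tau\to V$ with $(\uline\varphi,\varphi^\ast)=\mathcal T(\varphi_\Z)\circ\iota_\tau$; equivalently, the complex algebra morphism $\varphi^\ast:\O_{V_\C}(V_\C)\to\C[A_\tau]$ must carry $\O_V(V)$ into $\Z[A_\tau]$, and this integral lift must be unique.

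The descent of $\varphi^\ast$ to $\Z$ is the technical heart of the proof and the step I expect to be the main obstacle. One feeds into the square defining a morphism of S-gadgets the cyclotomic rings $R=\Z[\mu_m]$, $m\geq 1$: for $x\in\uline X_\tau(R)$ the element $\uline\varphi(R)(x)\in\uline V(R)=\Hom(\Spec R,V)$ is a genuine $R$-point of $V$, so the induced map on global sections $\O_V(V)\to R\to\C$ has image in $R$; on the other hand, as $m$ and $x$ vary, the complex points $e_{X_\tau}(R)(x)\in U_\tau(\C)$ exhaust the torsion points of the dense open torus $\Gm^n\subset U_\tau$ and hence form a Zariski-dense subset of $U_\tau$. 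Density pins down $\varphi^\ast$ as the unique complex morphism compatible with the $\uline\varphi(R)(x)$, and the integrality of its values on this dense set forces its coefficients, written in the monomial basis $A_\tau$ of $\Z[A_\tau]$, to be integers; uniqueness is then immediate from $\Z[A_\tau]\hookrightarrow\C[A_\tau]$. This is exactly the mechanism of \cite[proof of Thm.\ 1(i)]{Soule2004}, and smoothness is what makes it available, by reducing $U_\tau$ to a product of $\A^1$'s and $\Gm$'s with $A_\tau$ a product of free commutative monoids and free abelian groups.

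Finally, with each $X_\tau$ known to be an affine S-variety with $X_{\tau,\Z}\cong U_\tau$, the full (non-affine) statement is formal. I would exhibit the immersion $i:\mathcal S(X)\to\mathcal{O}b(X)$ induced by the inclusions $\bigcup_{\tau}\Hom(V,X_\tau)\subset\Hom(V_\Z,X)$ (the algebra component being the identity on $\O_{X_\C}(X_\C)$), and then verify the universal property of an S-variety: any morphism $\varphi:\mathcal S(X)\to\mathcal{O}b(V)$ restricts to morphisms $X_\tau\to\mathcal{O}b(V)$, hence by the affine case to morphisms $U_\tau\to V$, which glue along the open immersions $U_\tau\hookrightarrow U_{\tau'}$ — compatibility being forced by the injectivity built into $i$ together with Lemma \ref{lemma_univ_prop} — to a morphism $\varphi_\Z:X\to V$ that is unique with $\varphi=\mathcal{O}b(\varphi_\Z)\circ i$. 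Since $X=\varinjlim_\tau U_\tau$, this identifies $\mathcal S(X)\otimes_{\F_1}\Z$ with $X$ and completes the proof; compare \cite[section 4.2]{Soule2004} for the gluing formalism.
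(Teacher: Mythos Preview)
Your proposal is correct and aligns with the paper's treatment: the paper does not give an independent proof but simply records the theorem as a consequence of \cite[Thm.\ 1(i)]{Soule2004} together with \cite[Prop.\ 4]{Soule2004}, exactly the two inputs you invoke. You go further by unpacking the mechanism of Soul\'e's argument (the affine case via cyclotomic rings and density of torsion points, then gluing), but this is a faithful sketch of what lies behind the citation rather than a different route.
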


\begin{remark}
 In particular, \cite[Prop.\ 3]{Soule2004} implies that
 the S--gadgets $X_\tau$ are affine S--varieties with $U_\tau\simeq X_\tau\otimes_{\F_1}\Z$ for all $\tau\in\Delta$,
\end{remark}


\subsection{Affinely torified varieties as S--varieties}
\label{affinely_torified_S-varieties}

 In this section, we define a functor $\mathcal S^\sim$ from the category of affinely torified varieties to
 the category of S--objects, prove that $\mathcal S^\sim$ extends $\mathcal S$, 
 which allows us to drop the superscript ``$\sim$'',
 and show that Soul\'e's result (Theorem \ref{thm_soule}) extends to this class of S--objects.

 Let $X$ be torified variety with an affine torification $T=\{T_i\hookrightarrow X\}_{i\in I}$.
 Put $A_i=\Hom_{alg-gr}(T_i,\Gm)$ for $i\in I$.
 Let $\{U_j\}_{j\in J}$ be the \emph{maximal torified atlas}, i.e.\ the family of all affine open subschemes $U_j$ of $X$ 
 such that $U_j=\decomp_{i\in I_j} T_i$ for a subset $I_j$ of $I$. 
 We define an S--object $\mathcal S^\sim(X,T)$ in two steps.

 In the first step, we define an S--gadget $X_j^\sim=(\uline X_j^\sim,\mathcal A_j^\sim, e_j^\sim)$ for every $j\in J$ as follows.
 For $R\in\mathcal R$, put $\uline X_j^\sim(R)=\coprod_{i\in I_j}\Hom(A_i,\mu(R))$.
 Put $\mathcal A_j^\sim=\O_{U_{j,\C}}(U_{j,\C})$ and let
  \[
   e_j^\sim(R): \ \coprod_{i\in I_j}\; \Hom(A_i,\mu(R)) \ \longrightarrow \ \coprod_{i\in I_j}\; \Hom(\C[A_i],R\otimes\C) \ \hookrightarrow \ \Hom(\mathcal A_j^\sim,R\otimes\C) \;, 
 \]
 be the composition of the natural maps $\Hom(A_i,\mu(R))\to\Hom(\C[A_i],R\otimes\C)$ with the inclusion induced by the restriction maps
  \[ 
  \mathcal A_j^\sim \ = \ \O_{U_{j,\C}}(U_{j,\C}) \ \longrightarrow \  \O_{U_{j,\C}}(T_{i,\C}) \ = \ \C[A_i] \;.
 \]
 
 For every $i\in I$, there is a canonical morphism of S--gadgets $\iota_j: X_j^\sim\hookrightarrow\mathcal T(U_j)$,
 which is an immersion since the complex algebras are the same and since for every $R\in\mathcal R$, we have 
 $$ \uline X_j^\sim(R) \ = \ \coprod_{i\in I_j}\; \Hom(A_i,\mu(R)) \ \subset \ \coprod_{i\in I_j}\;\Hom(\Z[A_i],R) \ = \ \uline U_j(R) \;. $$
 Consequently, we obtain inclusions
 $$ \Hom(V,X_j^\sim)\subset\Hom(V_\Z,U_j)\subset\Hom(V_\Z,X) $$
 for every affine S--variety (cf.\  Lemma \ref{lemma_univ_prop}).
 
 In the second step, we define the S--object $\mathcal S^\sim(X,T)=(\uuline X,\mathcal A_X,e_X)$ as follows.
 For every affine S--variety $V$, put
 $$ \uuline X(V) \ = \ \bigcup_{j\in J} \; \Hom(V,X_j^\sim) \;, $$
 where the union is taken in $\Hom(V_\Z,X)$.
 Put $\mathcal A_X=\O_{X_\C}(X_\C)$, and let 
 $$ e_X(V): \ \uuline X(V) \ \subset \ \Hom(V_\Z,X) \ \to \ \Hom(\mathcal A_X,\mathcal A_V) $$
 be the natural map.

 By following through the construction of $\mathcal S^\sim$, we can associate to every torified morphism a morphism of S--objects.
 Note that in the second step, we have to make use of Lemma \ref{lemma: affinely torified morphisms}: 
 Let $f:X\to X'$ be a torified morphism. Then the family of all opens $U$ of $X$ in the maximal torified atlas of $X$ 
 whose image under $f$ is affine covers $X$. This allows us to define $\mathcal S^\sim(f)$ as the union of the restrictions 
 $\mathcal S^\sim(f_{|U})$ to affine opens. 
 Thus we defined $\mathcal S^\sim$ as a functor from the category of affinely torified varieties to the category of S--objects.

\begin{remark}
 \label{remark_torified_objects}
 A discussion similar to the one in Remark \ref{remark_torified_gadgets} shows that different affine torifications of the same 
 torified variety $X$ can lead to non-isomorphic S--objects. 
 The two torifications of $\A^2$ given in Remark \ref{remark_torified_gadgets} provide an example.
\end{remark}

 We show that $\mathcal S^\sim$ extends indeed Soul\'e's functor $\mathcal S$.
 Let $X$ be a toric variety with fan $\Delta$. Let $T_\Delta$ be the torification of $X$ as defined in section \ref{section_toric_as_torified}.
 We put $\mathcal S^\sim(X)=\mathcal S^\sim(X,T_\Delta)$.
 
\begin{lemma}
 For every $\tau\in\Delta$, there are isomorphisms 
 \[
  \xymatrix{
   \coprod_{\sigma\subset\tau}\; \Hom(A_\sigma^\times,\mu(R))\ar@<0.5ex>[rrr]^{\alpha(R)} 
             &&& \Hom(A_\tau,\mu(R)_0) \ar@<0.5ex>[lll]^{\beta(R)}
        }
    \]
 that are functorial in $R\in\mathcal R$.
\end{lemma}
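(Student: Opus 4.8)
The plan is to construct the maps $\alpha(R)$ and $\beta(R)$ explicitly and check they are mutually inverse, using the combinatorics of the fan. Recall that the faces $\sigma$ of $\tau$ are in bijection with the faces of the dual cone $\tau^\vee$, and that for each face $\sigma\subset\tau$ the semi-group $A_\sigma$ is obtained from $A_\tau$ by inverting exactly those elements of $A_\tau$ that pair to zero with the relative interior of $\sigma$; in particular $A_\sigma^\times = A_\sigma \cap (-A_\sigma)$ is the subgroup of $A_\tau$ generated by the ``faces of $\tau^\vee$ contained in $\sigma^\perp$''. The key combinatorial fact I would invoke is that $A_\tau$, as a monoid, decomposes as a disjoint union indexed by the faces $F$ of $\tau^\vee$: writing $A_F^\circ$ for the elements of $A_\tau$ lying in the relative interior of $F$, one has $A_\tau = \bigsqcup_{F} A_F^\circ$, and each $A_F^\circ$ is a torsor under the group $A_F^{\mathrm{gp}} = F \cap (-F) \cap (\Z^n)^\vee$, which is precisely $A_\sigma^\times$ for the face $\sigma\subset\tau$ dual to $F$.

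First I would define $\beta(R)$. Given a semi-group homomorphism $h: A_\tau \to \mu(R)_0$, its support $\{a\in A_\tau \mid h(a)\neq 0\}$ is a sub-semi-group of $A_\tau$ closed under the face relation (if $a+b$ is in the support then so are $a$ and $b$, since $h(a)h(b)=h(a+b)\neq 0$), hence of the form $A_\sigma$ for a unique face $\sigma\subset\tau$ — equivalently the support is the union of the interiors $A_F^\circ$ over faces $F$ of $\tau^\vee$ contained in $\sigma^\perp$. On this support $h$ takes values in $\mu(R)$ and restricts to a monoid homomorphism $A_\sigma\to\mu(R)$; since $\mu(R)$ is a group, this factors through the group completion, giving an element of $\Hom(A_\sigma^\times,\mu(R))$. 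I set $\beta(R)(h)$ to be this element, placed in the $\sigma$-component of the coproduct. Conversely, for $\alpha(R)$: given a face $\sigma\subset\tau$ and a homomorphism $g:A_\sigma^\times\to\mu(R)$, extend it to $A_\tau$ by $\alpha(R)(g)(a) = g(a)$ if $a\in A_\sigma$ (using that $A_\sigma^\times\subseteq A_\sigma$ has the structure needed — more precisely, $A_\sigma$ is itself a union of $A_\sigma^\times$-torsors, but the point is that $g$ extends canonically to $A_\sigma\to\mu(R)$ because $\mu(R)$ is a group and $A_\sigma$ embeds in $A_\sigma^\times$) and $\alpha(R)(g)(a)=0$ if $a\in A_\tau\setminus A_\sigma$. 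One checks this is a semi-group homomorphism to $\mu(R)_0$ by a short case analysis on whether the arguments lie in $A_\sigma$ or not, using that $A_\tau\setminus A_\sigma$ is an ideal of the monoid $A_\tau$.

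Then I would verify $\beta(R)\circ\alpha(R)=\id$ and $\alpha(R)\circ\beta(R)=\id$: the first amounts to observing that the support of $\alpha(R)(g)$ is exactly $A_\sigma$, so $\beta$ recovers the component $\sigma$ and the restriction of $g$; the second says that reconstructing $h$ from its support $A_\sigma$ and the induced homomorphism on $A_\sigma^\times$ gives back $h$, which holds because $h$ is determined by its values on its support (it is $0$ off the support) and those values are determined by the group homomorphism on $A_\sigma^\times$ since each torsor $A_F^\circ$ in the support maps compatibly. Finally, functoriality in $R$ is immediate: a ring homomorphism $R\to R'$ induces $\mu(R)\to\mu(R')$ and $\mu(R)_0\to\mu(R')_0$ compatibly with taking supports and restrictions, so both squares commute.

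The main obstacle I anticipate is the bookkeeping in the monoid decomposition $A_\tau = \bigsqcup_F A_F^\circ$ and the precise identification of $A_\sigma^\times$ with the group attached to the dual face — getting the face correspondence and the ``ideal complement'' property $A_\tau\setminus A_\sigma$ stated correctly, rather than any serious difficulty. (In fact, once one knows the standard fact that $U_\tau = \bigsqcup_{\sigma\subset\tau} T_\sigma$ as the torus decomposition of the affine toric chart, both maps and their inverseness can be read off by evaluating on $\mu(R)_0$-points of this decomposition; this is the cleanest route and I would present it that way.)
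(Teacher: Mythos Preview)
Your overall strategy is sound and, via the monoid--face decomposition, is arguably cleaner than the paper's own argument. But there is a notational slip that makes several of your statements literally false and needs to be fixed before the argument goes through.

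The issue is the role of $A_\sigma$. Since $\sigma\subset\tau$ implies $\tau^\vee\subset\sigma^\vee$, one has $A_\tau\subset A_\sigma$, so $A_\tau\setminus A_\sigma$ is empty and ``$A_\sigma$ embeds in $A_\sigma^\times$'' is false. What you want instead is the face $A_\tau\cap\sigma^\perp$ of the monoid $A_\tau$: this is the support of your $h$, it is contained in $A_\sigma^\times=\sigma^\perp\cap(\Z^n)^\vee$, and its complement $A_\tau\setminus(A_\tau\cap\sigma^\perp)$ is the ideal on which you set $\alpha(R)(g)$ to zero. With this correction your definition of $\alpha$ reads $\alpha(R)(g)(a)=g(a)$ for $a\in A_\sigma^\times\cap A_\tau$ and $0$ otherwise, your $\beta$ picks out the unique face $\sigma$ with $\mathrm{supp}(h)=A_\tau\cap\sigma^\perp$ and then restricts (and extends by group completion) to $A_\sigma^\times$, and the mutual inverseness is immediate from the face combinatorics you describe.

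By comparison, the paper defines $\beta$ differently: rather than reading off the face from the support, it characterises $\sigma$ as the \emph{smallest} face of $\tau$ for which $\psi:A_\tau\to\mu(R)_0$ extends to $A_\sigma\to\mu(R)_0$, and then restricts the extension to $A_\sigma^\times$. Proving $\alpha\circ\beta=\id$ then requires showing that $\psi$ vanishes on $A_\tau\setminus A_\sigma^\times$, for which the paper chooses an adapted basis of the ambient lattice and argues by contradiction against minimality of $\sigma$. Your support/face approach bypasses this coordinate computation entirely; once the notational confusion above is cleaned up, it gives a shorter proof of the same lemma.
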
 
 
\begin{proof}
 We construct the maps $\alpha=\alpha(R)$ and $\beta=\beta(R)$ as follows.
 Let $\vphi:A_\sigma^\times\to \mu(R)$ be an element of $\Hom(A_\sigma^\times,\mu(R))$.
 Since $A_\tau\subset A_\sigma$, we can define $\psi=\alpha(\vphi)$ by
 $$\begin{array}{cccl} \psi: & A_\tau & \longrightarrow & \mu(R)_0\;. \\
                                   &    a  & \longmapsto     & \left\{ \begin{array}{ll}\vphi(a)& \text{if }a\in A_\sigma^\times\cap A_\tau\\
                                                                  0       & \text{otherwise}
                            \end{array} \right. \end{array} $$
 
 Let $\psi: A_\tau\to\mu(R)_0$ be an element of $\Hom(A_\tau,\mu(R)_0)$.
 We claim that there is a \emph{smallest cone for $\psi$}, i.e.\ 
 a smallest subcone $\sigma$ of $\tau$ such that $\psi$ extends to a morphism $\psi': A_\sigma\to\mu(R)_0$ of semi-groups.
 Indeed, assume that $\psi$ extends to $\psi_1: A_{\tau_1}\to\mu(R)_0$ and $\psi_2: A_{\tau_2}\to\mu(R)_0$ for two cones $\tau_1,\tau_2\subset\tau$.
 Then $\psi$ extends also to a morphism from the semi-group generated by $A_{\tau_1}$ and $A_{\tau_2}$. 
 But this semi-group is nothing else than $A_{\tau_1\cap\tau_2}$. This proves the claim.
 
 If $\sigma$ is the smallest cone for $\psi$,
 then define $\vphi=\beta(\psi)$ as the restriction of $\psi':A_\sigma\to\mu(R)_0$ to $A_\sigma^\times$.

 We show that $\alpha$ and $\beta$ are mutually inverse.
 Let $\vphi:A_\sigma^\times\to \mu(R)$ be an element of $\Hom(A_\sigma^\times,\mu(R))$ and $\psi=\alpha(\vphi):A_\tau\to\mu(R)_0$.
 Then $\sigma$ is the smallest cone for $\psi$
 since for every $\sigma'\subsetneq\sigma$, the larger semi-group $A_{\sigma'}$ is still generated by $A_\sigma$
 and consequently $A_\sigma^\times\subsetneq A_{\sigma'}^\times$, but we know that $(\psi_Z^\#)^{-1}(\mu(B))=A_\sigma^\times$.
 Thus $\beta(\psi)$ equals $\vphi$ by definition of $\psi$. 

 Let conversely $\psi: A_\tau\to\mu(R)_0$ be an element of $\Hom(A_\tau,\mu(R)_0)$ and $\vphi=\beta(\psi)\in\Hom(A_\sigma^\times,\mu(R))$,
 where $\sigma$ is the smallest cone for $\psi$. It is clear by definition that $\alpha(\vphi)$ equals $\psi$ 
 restricted to $A_\sigma^\times\cap A_\tau$. We have to show that $\psi(A_\sigma\setminus A_\sigma^\times)=\{0\}$,
 where we extended $\psi$ to $\psi:A_\sigma\to\mu(R)_0$.
 If there is an $a\in A_\sigma\setminus A_\sigma^\times$ such that $\psi_\Z^\#(a)\neq0$, i.e.\ $\psi_\Z^\#(a)\in\mu(B)$,
 we derive a contradiction to the minimality of $\sigma$ as follows.
 
 Choose a basis $(\lambda_i)_{i\in N}$ of $\R^n$, where $N=\{1,\dotsc,n\}$ and $n$ is the dimension of $X$,
 such that $\sigma=\langle\lambda_i\R_{\geq0}\rangle_{i\in S}$ for some $S\subset N$ 
 and $\langle\lambda_i\R\rangle_{i\in N\setminus S}$ is orthogonal to $\sigma$
 (here ``$\langle-\rangle$'' denotes the generated semi-group in $\R^n$).
 Let $(\lambda_i^\ast)_{i\in N}$ be the dual basis of $(\lambda_i)_{i\in N}$,
 then $\sigma^\vee=\langle l_i\R_{\geq0} \rangle_{i\in S}+\langle l_i\R \rangle_{i\in N\setminus S}$.
 The set $\{\sigma'\in\Delta\mid\sigma'\subset\sigma\}$ is the set of cones of the form $\sigma_J=\langle\lambda_i\R_{\geq0}\rangle_{i\in J}$, 
 where $J$ is a subset of $S$. 
 For every $i\in N$, define $l_i$ as the smallest multiple of $\lambda_i^\ast$ such that $l_i\in A_\sigma$.
 Then $\sigma^\vee_J=\langle l_i\R_{\geq0} \rangle_{i\in J}+\langle l_i\R \rangle_{i\in N\setminus J}$ for every $J\subset S$,
 and the semi-group $L_J=\langle l_i \rangle_{i\in J}+\langle l_i\Z \rangle_{i\in N\setminus J}$ is of finite index in $A_{\sigma_J}$.
 This implies that for the chosen $a\in A_\sigma\setminus A_\sigma^\times$, a positive multiple $m\cdot a$ is in $L_S$,
 i.e.\ $m\cdot a=\sum_{i\in S} c_i l_i$ for certain non-negative integers $c_i$.
 Since we assume that $\psi_\Z^\#(a)\in\mu(B)$, we have that $\sum_{i\in S}c_i\psi_\Z^\#(l_i)=\psi_\Z^\#(m\cdot a)=\psi_\Z^\#(a)^m\neq0$ and thus
 already $\psi_\Z^\#(l_i)\neq0$ for some $i\in S$. Put $J=S\setminus\{i\}$. 
 Then $\psi_\Z^\#$ can be extended to a semi-group morphism $\tilde\psi_\Z^\#: A_{\sigma_J}\to\mu(B)_0$, 
 which yields the desired contradiction to the minimality of $\sigma$. 
 This completes the proof that $\alpha$ and $\beta$ are mutually inverse.

 It is clear that $\alpha(R)$ and $\beta(R)$ are functorial in $R$, i.e.\ that for every morphism $f:R_1\to R_2$ in $\mathcal R$,
 the diagram
 \[ \xymatrix{ \coprod\limits_{\sigma\subset\tau}\; \Hom(A_\sigma^\times,\mu(R_1))\ar@<0.8ex>[rr]^{f_\ast} \ar[d]_{\alpha(R_1)} 
                  &&\coprod\limits_{\sigma\subset\tau}\;  \Hom(A_\sigma^\times,\mu(R_2)) \ar[d]_{\alpha(R_2)} \\
       \Hom(A_\tau,\mu(R_1)_0)\ar[rr]^{f_\ast} && \Hom(A_\tau,\mu(R_2)_0)}  \]
 commutes.
\end{proof}
 
\begin{proposition}
\label{prop_S_and_S_tilde_are_isomorphic}
 The functors $\mathcal S$ and $\mathcal S^\sim$ from the category of toric varieties to the category of S--objects are isomorphic.
\end{proposition}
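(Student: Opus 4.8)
The plan is to identify the two functors ``chart by chart''. I would first observe that $\mathcal S(X)$ and $\mathcal S^\sim(X)$ have the same complex algebra, namely $\O_{X_\C}(X_\C)$ with $X_\C=X\otimes_\Z\C$; that in both constructions the functor $\uuline X$ is, by definition, a subfunctor of $V\mapsto\Hom(V_\Z,X)$ and the evaluation $e_X$ is the restriction of the canonical map $\Hom(V_\Z,X)\to\Hom(\O_{X_\C}(X_\C),\mathcal A_V)$; and that on a toric morphism $f$ both $\mathcal S(f)$ and $\mathcal S^\sim(f)$ act by postcomposing an element of $\Hom(V_\Z,X)$ with $f$. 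Hence, to prove the proposition it suffices to check that for every affine S--variety $V$ the two subsets $\uuline X^{\mathcal S}(V)$ and $\uuline X^{\mathcal S^\sim}(V)$ of $\Hom(V_\Z,X)$ coincide.

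For the torification $T_\Delta$, the torus attached to a cone $\sigma\in\Delta$ is $T_\sigma=\Spec\Z[A_\sigma^\times]$, so by the corollary in Section~\ref{subsection:torified_CC-varieties} identifying $A$ with $\Hom_{alg-gr}(\Spec\Z[A],\Gm)$ its character group is $A_\sigma^\times$, and moreover the standard chart satisfies $U_\tau=\decomp_{\sigma\subset\tau}T_\sigma$. Thus $U_\tau$ lies in the maximal torified atlas of $(X,T_\Delta)$, and writing $X^\sim_\tau$ for the corresponding S--gadget we have $\mathcal A_{X^\sim_\tau}=\O_{U_{\tau,\C}}(U_{\tau,\C})=\C[A_\tau]=\mathcal A_{X_\tau}$ and $\uline X^\sim_\tau(R)=\coprod_{\sigma\subset\tau}\Hom(A_\sigma^\times,\mu(R))$, which by the preceding lemma is functorially in bijection with $\uline X_\tau(R)=\Hom(A_\tau,\mu(R)_0)$. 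I would then check that the maps $\alpha(R),\beta(R)$ of that lemma are compatible with the canonical immersions $X_\tau\hookrightarrow\mathcal T(U_\tau)$ and $X^\sim_\tau\hookrightarrow\mathcal T(U_\tau)$: this amounts to unwinding the two inclusions $\Hom(A_\tau,\mu(R)_0)\hookrightarrow\Hom(\Z[A_\tau],R)$ and $\coprod_\sigma\Hom(A_\sigma^\times,\mu(R))\hookrightarrow\Hom(\Z[A_\tau],R)$ and observing that $\alpha$ matches them, which is immediate from the definitions of $\alpha$ and $\beta$ via restriction and the smallest cone. Consequently $X_\tau$ and $X^\sim_\tau$ are the same subgadget of $\mathcal T(U_\tau)$, so $\Hom(V,X_\tau)=\Hom(V,X^\sim_\tau)$ as subsets of $\Hom(V_\Z,U_\tau)\subset\Hom(V_\Z,X)$, compatibly with the evaluations.

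It then remains to see that the two unions agree. The inclusion $\uuline X^{\mathcal S}(V)\subset\uuline X^{\mathcal S^\sim}(V)$ is clear, since each $U_\tau$ belongs to the maximal torified atlas. For the reverse inclusion I would use the standard fact from toric geometry that the $T$--invariant affine open subschemes of $X$ are precisely the charts $U_\tau$, $\tau\in\Delta$: every member of the maximal torified atlas is a union of the orbits $T_\sigma$, hence $T$--invariant, hence equal to some $U_\tau$, so the atlas is exactly $\{U_\tau\}_{\tau\in\Delta}$ and the two unions coincide. Naturality in $V$ is then automatic, and functoriality in $X$ follows because, under the identification $X_\tau=X^\sim_\tau$, the local morphisms induced by a toric morphism $f:X\to X'$ agree; this reduces to checking that $\alpha$ and $\beta$ intertwine the transition maps on $\uline X_\tau$ and on $\uline X^\sim_\tau$ induced by the semi-group morphisms $\psi_\tau^\vee$, which is again immediate from the explicit formulas. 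I expect the one point requiring genuine care to be precisely this bookkeeping --- verifying that the abstract bijection of the preceding lemma is simultaneously compatible with the evaluations, with the immersions into $\mathcal T(U_\tau)$, and with the transition maps attached to toric morphisms, so that the chart-wise identifications assemble into an isomorphism of functors --- the characterization of the invariant affine opens of $X$ being the only external input needed.
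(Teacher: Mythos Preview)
Your approach is essentially the same as the paper's: both identify the S--gadgets $X_\tau$ and $X_\tau^\sim$ chart by chart using the bijection from the preceding lemma (with identity on the complex algebra), then observe that the ``second step'' in the two constructions coincides once one knows the maximal torified atlas of $(X,T_\Delta)$ is exactly $\{U_\tau\}_{\tau\in\Delta}$. The paper simply asserts this last fact, whereas you justify it via the characterization of $T$--invariant affine opens; otherwise the arguments match.
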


\begin{proof}
 Let $X$ be a toric variety with fan $\Delta$.
 Then the maximal torified atlas of $(X,T_\Delta)$ is $\{U_\tau\}_{\tau\in\Delta}$.
 We first show that for every $\tau\in\Delta$, the corresponding S--gadgets $X_\tau$ 
 and $X_\tau^\sim$ are isomorphic.
 We define maps 
 \[\xymatrix{X_\tau^\sim=(\uuline X^\sim,\mathcal A_X^\sim,e_X^\sim)\ar@<1ex>[rrr]^{\alpha_\tau=(\uline\alpha_\tau,\alpha_{\tau,\C})} 
             &&& X_\tau=(\uuline X,\mathcal A_X,e_X)\;. \ar@<0.0ex>[lll]^{\beta_\tau=(\uline\beta_\tau,\beta_{\tau,\C})}   }\]
 as follows. For every $R\in\mathcal R$, define $\uline\alpha_\tau(R)$ as the map $\alpha(R)$ of the previous lemma. 
 Define $\alpha_{\tau,\C}$ as the identity map of $\mathcal A_\tau^\sim=\O_{X_{\tau,\C}}=\mathcal A_\C$.
 Concerning $\beta_\tau$, define$\uline\beta_\tau(R)$ as the map $\beta(R)$ of the previous lemma for every $R\in\mathcal R$.
 Define $\beta_{\tau,\C}$ like $\alpha_{\tau,\C}$ as the identity map.
 It is easily verified that $\alpha_\tau$ and $\beta_\tau$ are indeed morphisms of S--gadgets.
 The previous lemma implies that $\alpha_\tau$ and $\beta_\tau$ are inverse to each other.

 Since the second steps in the constructions of $\mathcal S(X)$ and $\mathcal S^\sim(X)$ coincide,
 the families $\{\alpha_\tau\}_{\tau\in\Delta}$ and  $\{\beta_\tau\}_{\tau\in\Delta}$ define mutually inverse
 morphisms $\alpha_X:\mathcal S^\sim(X)\to\mathcal S(X)$ and $\beta_X:\mathcal S(X)\to\mathcal S^\sim(X)$ of S--objects.
 It is straightforward to verify that $\alpha_X$ and $\beta_X$ are functorial in $X$, i.e.\ that the diagram
 \[ \xymatrix{\mathcal S(X_1) \ar[rr]^{\mathcal S(f)} \ar[d]_{\beta_{X_1}} && \mathcal S(X_2) \ar[d]_{\beta_{X_2}} \\
      \mathcal S^\sim(X_1) \ar[rr]^{\mathcal S^\sim(f)} && \mathcal S^\sim(X_2)}  \]
 commutes for every toric morphism $f:X_1\to X_2$.
 Thus we established an isomorphism of functors.
\end{proof}

 The proposition justifies that we can write $\mathcal S(X,T)=\mathcal S^\sim(X,T)$ for an affinely torified variety $(X,T)$.

\begin{theorem}
 \label{thm_javier}
 If $(X,T)$ is an affinely torified variety,
 then $\mathcal S(X,T)$ is an S--variety such that $\mathcal S(X,T)_\Z\simeq X$.
 More precisely, the composition of the functors $\mathcal S$ and $-\otimes_\Fun\Z$ is isomorphic to the functor 
 from affinely torified varieties to schemes that forgets the torification.
\end{theorem}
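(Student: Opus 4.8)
\end{theorem}

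\begin{proof}
The plan is to imitate the proof of Theorem \ref{thm_ll} mutatis mutandis, working with S--gadgets and S--objects in place of CC--gadgets and with split tori in the role played there by the multiplicative group. As in that proof I would first treat the affine case, then pass to the global case by gluing, and finally check functoriality.

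\emph{The affine case.} Suppose $X=\decomp_{i\in I}T_i$ is affine. Then $X$ itself is a member of the maximal torified atlas, every other member is an affine open of $X$, and a short check (using Lemma \ref{lemma_univ_prop}) shows that $\mathcal S(X,T)$ is the S--object associated to the single S--gadget $X^\sim=(\uline X^\sim,\O_{X_\C}(X_\C),e^\sim)$ with $\uline X^\sim(R)=\coprod_{i\in I}\Hom(A_i,\mu(R))$; by \cite[\S 4.2]{Soule2004} it therefore suffices to prove that $X^\sim$ is an affine S--variety with $X^\sim\otf\Z\cong X$. We already have the immersion $\iota:X^\sim\hookrightarrow\mathcal T(X)$. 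To verify the universal property, let $V$ be an affine scheme of finite type over $\Z$ and $\vphi=(\uline\vphi,\vphi^\ast):X^\sim\to\mathcal T(V)$ a morphism of S--gadgets; I must produce a unique $\vphi_\Z:X\to V$ with $\vphi=\mathcal T(\vphi_\Z)\circ\iota$. Since the complex morphism $\vphi_\C:X_\C\to V_\C$ dual to $\vphi^\ast$ is given, it suffices to show that $\vphi_\C$ is already defined over $\Z$.

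For each index $i$ in the set $I^o$ of \emph{open} tori (cf.\ Lemma \ref{lemma:torificationpoints}(2)), the inclusion of the $i$-th summand together with the restriction $\O_{X_\C}(X_\C)\to\C[A_i]$ is an immersion of S--gadgets $\mathcal S(T_i)\hookrightarrow X^\sim$, the gadget--level shadow of the open immersion $T_i\hookrightarrow X$. Composing $\vphi$ with it gives a morphism $\mathcal S(T_i)\to\mathcal T(V)$; since $T_i\cong\Gm^{\dim T_i}$ is a smooth affine toric variety, $\mathcal S(T_i)$ is an affine S--variety with $\mathcal S(T_i)_\Z\cong T_i$ (the remark following Theorem \ref{thm_soule}), so we obtain a morphism $\psi_i:T_i\to V$ over $\Z$ whose complexification is the restriction of $\vphi_\C$ to $T_{i,\C}$. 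As the $T_i$ with $i\in I^o$ are pairwise disjoint dense opens of the irreducible components of $X$ (Lemma \ref{lemma:torificationpoints}), the $\psi_i$ assemble $\vphi_\C$ into a rational map $X\dashrightarrow V$ defined over $\Z$; that this rational map is actually a morphism I would argue exactly as in Theorem \ref{thm_ll}: with $\delta$ a global function on $X$ cutting out the union of the non--open tori one has $\vphi^\ast(\O_V(V))\subseteq\O_X(X)[\delta^{-1}]$ and $\vphi^\ast(\O_V(V))\subseteq\O_X(X)\otz\C$, while $\O_X(X)[\delta^{-1}]\cap(\O_X(X)\otz\C)=\O_X(X)$ inside the total ring of fractions of $X_\C$ because $X$ is reduced. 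Hence $\vphi^\ast$ is defined over $\Z$ and yields the required $\vphi_\Z$, which is unique since $\iota$ is an immersion; and Proposition \ref{prop:counting} shows that the functor of $X^\sim$ represents the counting function of $X$.

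\emph{Gluing and functoriality.} For a general affinely torified variety $(X,T)$ with maximal torified atlas $\{U_j\}_{j\in J}$, the affine case shows each $X_j^\sim$ is an affine S--variety with $X_j^\sim\otf\Z\cong U_j$, and the second step of the construction glues the $X_j^\sim$ along their inclusions into $\Hom(V_\Z,X)$ precisely as Soul\'e glues the affine charts of a toric variety. Soul\'e's proof that $\mathcal S(Y)$ is an S--variety for a smooth toric $Y$ (Theorem \ref{thm_soule}) uses the toric structure only to know that the charts $Y_\tau$ are affine S--varieties; having just established the analogue for the $X_j^\sim$, the same argument goes through verbatim and shows that $\mathcal S(X,T)$ is an S--variety whose base extension $\mathcal S(X,T)_\Z$ is the direct limit of the $U_j$, that is, $X$. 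Finally, for an affinely torified morphism $f:(X,T)\to(X',T')$, the definition of $\mathcal S(f)$ through a cover of $X$ by charts with affine image, combined with the uniqueness in the universal property, forces $\mathcal S(f)_\Z$ to agree with $f$ once torifications are forgotten; this is the same bookkeeping as in the last paragraph of the proof of Theorem \ref{thm_ll}, and it delivers the stated isomorphism between $\mathcal S\circ(-\otf\Z)$ and the forgetful functor. The step I expect to demand the most care is the gluing: one must verify that the S--object produced by the two--step construction genuinely satisfies the universal property defining an S--variety (not merely that it is covered by affine S--varieties), i.e.\ that Soul\'e's passage from affine to global S--varieties survives replacing the affine toric charts by the torified charts $X_j^\sim$. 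What makes this work is that the comparison of local and global functors of points is literally the one used for toric varieties (Proposition \ref{prop_S_and_S_tilde_are_isomorphic} and the lemma preceding it); only the identification of the charts with explicit affine toric varieties is unavailable, and it is not needed once the charts are known to be affine S--varieties.
\end{proof}
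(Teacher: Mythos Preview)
Your proposal is correct and follows essentially the same approach as the paper: reduce to the affine case via \cite[Prop.~3]{Soule2004}, descend $\vphi_\C$ to $\Z$ by first using that the open tori $\Gm^{d_i}$ are affine S--varieties (the paper isolates this as a separate first step via Proposition~\ref{prop_S_and_S_tilde_are_isomorphic} and Theorem~\ref{thm_soule}) and then applying the $\O_X(X)[\delta^{-1}]\cap(\O_X(X)\otz\C)=\O_X(X)$ argument from Theorem~\ref{thm_ll}, and finally glue. The one place where the paper is sharper is the gluing step: rather than saying Soul\'e's toric argument ``goes through verbatim,'' the paper simply invokes \cite[Prop.~5]{Soule2004}, which is a general gluing criterion for S--varieties requiring only that the pieces be affine S--varieties satisfying the obvious compatibilities---this is exactly what you need and citing it would replace your closing caveat with a clean reference.
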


\begin{proof}
 Define the morphism of S--objects $\iota=(\uuline\iota,\iota_\C):\mathcal S(X,T)\to\mathcal T(X)$ as follows.
 Write $\mathcal S(X,T)=(\uuline X,\mathcal A_X,e_X)$.
 For every affine S--variety $V$, let $\uuline\iota(V): \uuline X(V)\hookrightarrow\Hom(V_\Z,X)$ be the extension of scalars, 
 which is an injective map (cf.\ Lemma \ref{lemma_univ_prop}).
 Let $\iota_\C$ be the identity map of $\mathcal A_X=\O_{X_\C}(X_\C)$.
 It is clear that $\iota$ defines a morphism and that it is an immersion of S--objects.
 
 We raise in three steps the generality of $X$. 
 In the first step, let $X$ be ${\mathbb G}_m^n$ for an $n\geq0$. 
 Then there exists up to isomorphism only one torification of ${\mathbb G}_m^n$, namely $T=\{{\mathbb G}_m^n\to {\mathbb G}_m^n\}$ 
 given by the identity map.
 Then $T$ is the same as the torification $T_\Delta$ if we consider ${\mathbb G}_m^n$ as toric variety with fan $\Delta=\{0\}$.
 Proposition \ref{prop_S_and_S_tilde_are_isomorphic} states that $\mathcal S({\mathbb G}_m^n,T)\simeq\mathcal S({\mathbb G}_m^n)$ and 
 Theorem \ref{thm_soule} says that $\mathcal S({\mathbb G}_m^n)$ is an S--variety such that $\mathcal S({\mathbb G}_m^n)_\Z\simeq{\mathbb G}_m^n$.
 
 In the second step, let $X$ be affine with torification $T$. 
 In this case, $X$ itself appears in the maximal torified atlas $\{U_i\}_{i\in I}$ of $X$, say $X=U_0$.
 Then $\mathcal S(X,T)=(\uuline X,\mathcal A_X,e_X)$ has the following simple description.
 Let $X_0=(\uline X_0,\mathcal A_0,e_0)$ be the S--gadget defined by $U_0$.
 For every affine S--variety $V$, we have $\uuline X(V)=\Hom(V,X_0)$, we have $\mathcal A_X=\mathcal A_0$ and
 $e_X(V)$ sends a morphism $\vphi=(\uline\vphi,\vphi_\C^\ast)\in\Hom(V,X_0)$ to $\vphi_\C^\ast\in\Hom(\mathcal A_0,\mathcal A_V)$.
 From this description it follows that we can apply \cite[Prop.\ 3]{Soule2004} to derive
 that $\mathcal S(X,T)$ is an S--variety if and only if $X_0$ is an affine S--variety,
 and if this is the case then $\mathcal S(X,T)_\Z\simeq (X_0)_\Z$.
 The same idea as used in the proof of Theorem \ref{thm_ll} applies to this situation.
 Namely, let $V$ be an affine S--variety and let $(\uline\vphi,\vphi_\C^\ast):X_0\to V$ be a morphism of S--gadgets.
 Every irreducible component of $X$ has a unique open subtorus isomorphic to $\mathbb G_m^n$ for some $n\geq0$ in the torification $T$.
 In the first step, we showed that $\mathcal S(\mathbb G_m^n)$ is an S--variety. Thus the S--gadget $(\mathbb G_m^n)_0$
 defined by $\mathbb G_m^n$ is an affine S--variety. 
 Using the universal property of $(\mathbb G_m^n)_0$ defines a rational map $\vphi_\Z:X\to V_\Z$.
 For the same reasons as in the proof of Theorem \ref{thm_ll} we see that $\vphi_\Z$
 is indeed a morphism of schemes that verifies the universal property of an affine S--variety for $X$.
 
 In the third and last step, we let $(X,T)$ be a general affinely torified variety with maximal torified atlas $\{U_i\}_{i\in I}$.
 Then $U_i$ is affine and $T$ restricts to a torification $T_i$ of $U_i$ for every $i\in I$.
 By the previous step $\mathcal S(U_i,T_i)$ is an S--variety such that $\mathcal S(U_i,T_i)_\Z\simeq U_i$.
 The family $\{\mathcal S(U_i,T_i)\}_{i\in I}$ satisfies the conditions of \cite[Prop.\ 5]{Soule2004}, 
 and thus $\mathcal S(X,T)$ is an S--variety with $\mathcal S(X,T)_\Z\simeq\bigcup_{i\in I}U_i\simeq X$.

 Concerning functoriality, it is clear that for all torified morphisms $f: (X,T)\to (X',T')$ between affinely torified varieties, the diagram
 $$ \xymatrix{X\ar[rr]^f\ar[d]^\sim&&X'\ar[d]^\sim\\\mathcal{S}(X,T)_\Z\ar[rr]^{\mathcal S(f)}&&\mathcal{S}(X',T')_\Z} $$
 commutes. This establishes an isomorphism between the composition of $\mathcal S$ and $-\otimes_\Fun\Z$ 
 and the forgetful functor from affinely torified varieties to schemes.
\end{proof}


\section{Deitmar's geometry}\label{section:deitmar}

\subsection{D--schemes}
 First let us recall the theory of schemes over $\F_1$ in Deitmar's sense. 
 The main idea is to substitute commutative rings with $1$ (called rings in the latter) 
 by commutative semi-groups with $1$ (called monoids in the latter) and to mimic scheme theory for monoids.
 It turns out that to a far extent, it is possible to obtain a theory that looks formally the same as usual algebraic geometry.
 Since definitions are lengthy, we only name the notions we make use of and give the reference to the proper definition
 in Deitmar's paper \cite{Deitmar2005}.

 There is the notion of prime ideals and the spectrum $\spec A$ of a monoid $A$ (\cite[section 1]{Deitmar2005}),
 schemes $X$ over $\F_1$ with underlying topological space $X^\top$ and morphisms of schemes (\cite[section 2.3]{Deitmar2005}), 
 the structure sheaf $\O_X$ and local monoids $\O_{X,x}$ for $x\in X^\top$ (\cite[sections 2.1--2.2]{Deitmar2005}).

 There is a base extension functor $-\otimes_{\mathbb F_1}\mathbb Z$ that sends $\spec A$ to $\Spec \mathbb Z[A]$,
 where $\mathbb Z[A]$ is the semi-group ring of $A$. The right-adjoint of $-\otimes_{\mathbb F_1}\mathbb Z$ 
 is the forgetful functor from rings to monoids (\cite[Thm. 1.1]{Deitmar2005}). 
 Both functors extend to functors between schemes over $\mathbb F_1$ and $\mathbb Z$ (\cite[section 2.3]{Deitmar2005}). 
 We will often write $X_\Z$ for $X\otimes_{\F_1}\Z$.
 We denote by \dtext{D--schemes} the category of schemes over $\F_1$ together with morphism of schemes in Deitmar's sense.

 A D--scheme $X$ is \dtext{connected} if it is connected as topological space. A D--scheme $X$ is \dtext{separated} if $X_\Z$ is separated. A monoid $A$ is \dtext{integral} if for every $a\in A$, the multiplication by $a$ defines an injective map $A\to A$. A D--scheme $X$ is \dtext{integral} (resp.\ \dtext{of finite type} resp.\ \dtext{of exponent $1$}) 
 if for all affine opens $\spec A$ of $X$, $A$ is integral
 (resp.\ $\mathbb Z[A]$ is of finite type (cf.\ \cite[Lemma 2]{Deitmar2006}) 
 resp.\ $1$ is the only element of finite multiplicative order in $A$).


\subsection{Toric varieties as D--schemes}
\label{toric_D-schemes}
 In \cite[section 4]{Deitmar2007},  Deitmar describes a functor $\mathcal D$ that associates to a toric variety $X$ with fan $\Delta$
 the following scheme over $\F_1$.
 Let $X$ be a toric variety with fan $\Delta$.
 We use the notation from section \ref{section_toric_as_torified}.
 An inclusion $\tau\subset\tau'$ of cones gives an inclusion of monoids $A_{\tau'}\subset A_\tau$ and thus 
 we obtain a directed system of affine D--schemes $\{\spec A_\tau\}_{\tau\in\Delta}$. 
 The D--scheme $\mathcal D(X)$ is defined as the limit over this system. It is immediate that $\mathcal D(X)$ is a connected separated integral D--scheme of finite type and exponent $1$.

 Let $(\psi,\tilde\psi,\{\psi_\tau\})$ be a toric morphism.
 The directed system of morphisms $\psi_\tau^\vee: A_{\tilde\psi(\tau)}\to A_\tau$
 describes a morphism $\mathcal D(f):\mathcal D(X)\to\mathcal D(X')$ of D--schemes.
 This establishes $\mathcal D$ as a functor.
 
 Every monoid $A$ has a unique maximal subgroup, namely 
 the group $A^\times$ of invertible elements, 
 and a unique maximal ideal, namely $\mathfrak m=A\setminus A^\times$.
 We define the \dtext{rank $\rk\tau$ of a cone $\tau$} as the rank of $A_\tau^\times$ and 
 the \dtext{rank $\rk x$ of a point $x$} in $X^\top$ as the rank of $\O_{X,x}^\times$.
 For every cone $\tau\in\Delta$, we have the canonical inclusion $\iota_\tau:\spec A_\tau\hookrightarrow\mathcal D(X)$. 
 We define 
 $$ \begin{array}{cccc} \Psi: & \Delta & \longrightarrow & \mathcal D(X)^\top \;,\!\!\! \\
                              & \tau   & \longmapsto     & \iota_\tau(\mathfrak m_\tau)  \end{array} $$
 where $\mathfrak m_\tau$ is the maximal ideal of $A_\tau$.

The following is a refinement of Deitmar's Theorem 4.1 in \cite{Deitmar2007}.

\begin{theorem}\label{thm_deitmar}\ 
 \begin{enumerate}
  \item \label{deitmar1} The functor $\mathcal D$ induces an equivalence of categories
        $$ \mathcal D:\ \ \left\{\begin{array}{c}\text{toric varieties}\end{array}\right\}
           \ \ \ \stackrel\sim\longrightarrow \ \ \ 
           \left\{\begin{array}{c}\textrm{connected separated integral D--schemes}\\ 
                                  \textrm{of finite type and of exponent }$1$ \end{array}\right\} $$
        with $-\otimes_{\mathbb F_1}\mathbb Z$ being its inverse.
  \item \label{deitmar2} Let $X$ be a toric variety with fan $\Delta$. Then $\Psi:\Delta\to\mathcal D(X)^\top$ is a bijection 
 such that $\tau\subset\tau'$ if and only if $\Psi(\tau')$ is contained in the closure of $\Psi(\tau)$. 
        Furthermore, $A_\tau\simeq\O_{\mathcal D(X),\Psi(\tau)}$ and $\rk\Psi(\tau)=\rk\tau$ for all $\tau\in\Delta$.
 \end{enumerate} 
\end{theorem}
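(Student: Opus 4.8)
The plan is to take Deitmar's Theorem 4.1 of \cite{Deitmar2007} as the main input and upgrade it in two respects: give an intrinsic description of the essential image of $\mathcal D$ (part (1)) and make the cone--point dictionary explicit (part (2)). First I would check that $\mathcal D(X)$ genuinely lies in the target category for every toric variety $X$ with fan $\Delta$. Connectedness follows because the zero cone $0\in\Delta$ gives an affine open $\spec A_0$ with $A_0=(\Z^n)^\vee$ a group, which sits inside every $\spec A_\tau$; since $\spec A_\tau$ has a single generic point, $\mathcal D(X)$ is irreducible. Integrality holds since $A_\tau=\tau^\vee\cap(\Z^n)^\vee$ embeds in the free abelian group $(\Z^n)^\vee$, so $\Z[A_\tau]$ is a subring of a Laurent polynomial ring, hence a domain. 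Finite type is Gordan's lemma. Exponent $1$ holds because $A_\tau$ is a submonoid of a torsion-free group. Moreover $\mathcal D(X)\otimes_{\Fun}\Z\cong X$ holds by construction, $\spec A_\tau\otimes_{\Fun}\Z=\Spec\Z[A_\tau]=U_\tau$, compatibly with gluing and functorially; thus $-\otimes_{\Fun}\Z\circ\mathcal D\simeq\id$.

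Next I would prove part (2), which simultaneously provides the combinatorics needed for part (1). The key algebraic fact is that the prime ideals of the monoid $A_\tau$ are precisely the complements $A_\tau\setminus(\gamma^\perp\cap A_\tau)$ of its faces $\gamma^\perp\cap A_\tau$, as $\gamma$ runs over the faces of $\tau$; this is the monoid-level shadow of the orbit decomposition of $U_\tau$ and I would derive it directly from $A_\tau=\tau^\vee\cap(\Z^n)^\vee$. Under this correspondence the maximal ideal $\mathfrak m_\tau=A_\tau\setminus A_\tau^\times$ corresponds to $\gamma=\tau$, and $\mathfrak p\subseteq\mathfrak p'$ holds exactly when the associated faces are reverse-included, i.e.\ $\gamma$ is a face of $\gamma'$. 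Since the charts $\spec A_\tau$ are glued along $\spec A_{\tau\cap\tau'}$, two primes get identified exactly when they correspond to the same cone, so $\Psi\colon\Delta\to\mathcal D(X)^\top$ is a bijection, and $\Psi(\tau')\in\overline{\{\Psi(\tau)\}}$ iff $\tau$ is a face of $\tau'$. The stalk at $\Psi(\tau)=\iota_\tau(\mathfrak m_\tau)$ is the localization of $A_\tau$ at $\mathfrak m_\tau$, which only inverts elements of $A_\tau^\times$ and hence equals $A_\tau$; therefore $A_\tau\simeq\O_{\mathcal D(X),\Psi(\tau)}$, and $\rk\Psi(\tau)=\rk A_\tau^\times=\rk\tau$ directly from the definitions of the two ranks.

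For part (1) I would then check that $\mathcal D$ is fully faithful and identify its essential image. Full faithfulness: a D-scheme morphism $\mathcal D(X)\to\mathcal D(X')$ restricts on the dense-torus charts to a morphism of tori, and, using part (2) to match the charts $\spec A_\tau$ with neighborhoods of the points $\Psi(\tau)$ ordered by the fan, this extension is forced to come from a unique toric morphism in the sense of section \ref{section_toric_as_torified}. For the essential image, one inclusion is the first step above. Conversely, given a connected integral D-scheme $Y$ of finite type and of exponent $1$, covering $Y$ by affine opens $\spec A$ and invoking integrality (which forces $A$ cancellative), exponent $1$ (which forces $A$ torsion-free) and finite type (which forces $A$ finitely generated) shows each local monoid is of the type from which a toric variety is built, so $Y_\Z$ is a toric variety with $\mathcal D(Y_\Z)\simeq Y$, the gluing being governed by the combinatorics of part (2). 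The hard part is exactly this last step --- recognizing the local monoids of such a $Y$ and reconstructing the fan from the poset $Y^\top$ together with the local ranks --- which is where Deitmar's Theorem 4.1 carries the real weight, our task being to verify that its hypotheses are equivalent to the clean conditions \emph{connected}, \emph{integral}, \emph{of finite type} and \emph{of exponent $1$}.
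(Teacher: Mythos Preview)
Your proposal is correct and covers the same ground as the paper, but the logical organization and the argument for part~(2) are genuinely different.

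The paper proves part~(1) first, deferring almost everything to \cite[Thm.\ 4.1 and section 4]{Deitmar2007} (only noting that connectedness of the base extension follows from the stated hypotheses on the D--scheme). It then uses the equivalence of categories to prove part~(2) by a three-step bijection: cones $\tau\in\Delta$ correspond to toric affine opens of $X$ via $\Psi_1:\tau\mapsto(U_\tau\hookrightarrow X)$; these correspond under $\mathcal D$ to the affine opens of $\mathcal D(X)$; and points $x\in\mathcal D(X)^\top$ correspond to affine opens via $x\mapsto(\spec\O_{\mathcal D(X),x}\hookrightarrow\mathcal D(X))$, using the monoid-scheme fact that $\O_{\spec A,\mathfrak m}=A$. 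The composite is $\Psi$, and the stalk identification and rank equality drop out at the end.

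You instead prove part~(2) first and from scratch, by directly computing the prime spectrum of each $A_\tau$ as the set of complements of faces $\gamma^\perp\cap A_\tau$ and then gluing; this is the monoid analogue of the orbit--cone correspondence and avoids any appeal to the categorical equivalence. You then feed part~(2) back into the fully-faithful step of part~(1). Your route is more self-contained and explicit for part~(2), at the cost of redoing a piece of structure theory that the paper gets for free from the equivalence; the paper's route is slicker but circular-looking unless one is careful that \cite{Deitmar2007} really provides part~(1) independently of part~(2). Both proofs ultimately defer the hard direction of part~(1) --- recognizing an abstract connected integral D--scheme of finite type and exponent~$1$ as coming from a fan --- to Deitmar's original argument, and you are right to flag that as the point where the real work lies.
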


\begin{proof}
 From the proof of \cite[Thm.\ 4.1]{Deitmar2007} it becomes clear that $Y\otimes_{\F_1}\Z$ is connected if $Y$ is a separated integral D--scheme of finite type that is connected and of exponent $1$. The rest of part \ref{deitmar1} of the theorem follows from \cite[section 4]{Deitmar2007}.

 We proceed with part \ref{deitmar2} of the theorem.
 First note that the assignment 
 $$ \Psi_1: \tau \ \mapsto \ (\Spec\Z[\tau^\vee]\hookrightarrow X) $$
 defines a bijection between
 $\Delta$ and the family of the affine opens $U$ of $X$ such that the inclusion $U\hookrightarrow X$ is a toric morphism.
 If $\tau\subset\tau'$ then $\Spec\Z[\tau^\vee]\subset\Spec\Z[\tau'^\vee]$.
 
 By the part \ref{deitmar1} of the theorem, the functor $\mathcal D$ puts this family in one-to-one correspondence 
 with the affine opens of $\mathcal D(X)$ and respects inclusions.
 
 Since $\O_{\spec A,\mathfrak m}=A$ if $A$ is a monoid with maximal ideal $\mathfrak m=A\setminus A^\times$ (cf.\ \cite[section 1.2]{Deitmar2005}),
 the assignment 
 $$ x \ \mapsto \ (\spec\O_{\mathcal D(X),x}\hookrightarrow\mathcal D(X)) $$ 
 defines a bijection between $\mathcal D(X)^\top$ and the affine opens of $\mathcal D(X)$.
 Note that $x$ is the image of the maximal ideal of $\O_{\mathcal D(X),x}$ 
 under the canonical inclusion $\spec\O_{\mathcal D(X),x}\hookrightarrow \mathcal D(X)$, which describes the inverse $\Psi_2$ of the latter bijection.
 If $x'$ is contained in the closure of $x$, then we have a inclusion $\spec\O_{\mathcal D(X),x}\to\spec\O_{\mathcal D(X),x'}$.
 Since $\Psi=\Psi_2\circ\mathcal D\circ\Psi_1$, we established that $\Psi$ is a bijection and
 that $\tau\subset\tau'$ if and only if $\Psi(\tau')$ is contained in the closure of $\Psi(\tau)$. 
 
 By definition, the rank of $\tau$ and the rank of $\Psi(\tau)$ equal the rank of the maximal subgroups of the monoids
 $A_\tau$ and $\O_{\mathcal D(X),\Psi(\tau)}$, respectively.
 The canonical inclusion $\spec A_\tau\hookrightarrow\mathcal D(X)$
 induces the isomorphism $\O_{\mathcal D(X),\Psi(\tau)}\simeq \O_{\spec A_\tau,\mathfrak m_\tau}\simeq A_\tau$,
 and consequently we obtain equality of ranks.
\end{proof}


\section{Comparison between the different geometries over $\F_1$}\label{section:geometries}

 In this section, we establish certain functors between the categories of D--schemes, S--objects and CC--gadgets
 and investigate to what extent they commute with base extension to $\Z$ and with the realizations of classes of varieties over $\F_1$
 from the previous sections. Finally, we put together the results of the paper in Theorem \ref{thm_large_diagram}.


\subsection{From D--schemes to CC--gadgets}
 In this section, we construct a functor $\FDCC$ from the category of integral D--schemes of finite type to the category of CC--gadgets.

 Let $X$ be an integral D--scheme of finite type. We define the CC--gadget $\FDCC(X)=(\uline X, X_\C, ev_X)$ as follows.
 For a finite abelian group $D$, we define $D_0$ to be the monoid $D\cup\{0\}$ that extends the multiplication of $D$ by $0\cdot a=0$ for every $a\in D$. 
 Put 
 $$ \uline X(D) \ = \ \Hom(\spec D_0,X) \ = \ \bigcup_{x\in X^\top}\, \Hom(O_{X,x}^\times,D)\;, $$
 where the latter equality is explained in the proof of Theorem 1 in \cite{Deitmar2006}.
 Put $X_\C=X\otimes_\Fun\C$, which is indeed a complex variety since the base extension of $X$ to $\C$ is a disjoint union of a toric varieties (cf.\ \cite[Thm.\ 4.1]{Deitmar2007}).
 Note that the immersion $\spec \mathcal O_{X,x}\hookrightarrow X$ induces $\Spec \C[\mathcal O_{X,x}]\hookrightarrow X_\C$
 and define $\ev_X(D)$ as the composition of the natural maps
 $$ \bigcup_{x\in X^\top}\! \Hom(O_{X,x}^\times,D) \ \longrightarrow \bigcup_{x\in X^\top}\! \Hom(\C[\mathcal O_{X,x}],\C[D])
              \ \longrightarrow \ \Hom(\Spec\C[D], X_\C) \;. $$

 Given a morphism $f:X\to X'$ between integral D--schemes of finite type and exponent $1$, we define $\FDCC(f)=(\uline f, f_\C)$,
 where $\uline f(D)=f_\ast:\Hom(\spec D_0,X)\to\Hom(\spec D_0,X')$ and $f_\C=f\otimes_\Fun\C:X_\C\to X'_\C$. 
 It is immediate that $\FDCC(f)$ is a morphism of CC--gadgets.
 
 Note that for a finite abelian group $D$, the set $\uline X(D)$ is finite. Putting 
 $$ \uline X^{(l)}(D) \ = \ \bigcup_{\substack{x\in X^\top\\ \rk x=l}} \Hom(O_{X,x}^\times,D) $$ 
 defines a grading $\uline X=\bigcup_{l\geq0} \uline X^{(l)}$.
 Thus we can consider $\FDCC(X)$ as a finite graded CC--gadget.
 
\begin{proposition}
 \label{prop_D_to_CC}
 The functors $\mathcal L$ and $\FDCC\circ\mathcal D$ from the category of toric varieties to the category of finite graded CC--varieties are isomorphic.
\end{proposition}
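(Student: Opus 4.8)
The plan is to exhibit, for each toric variety $X$ with fan $\Delta$, a natural isomorphism of CC--gadgets between $\mathcal L(X,T_\Delta)$ and $\FDCC(\mathcal D(X))$, and then check that this isomorphism is functorial in $X$. Both gadgets have the same complex variety, namely $X_\C = X\otimes_\Z\C$, so the content is entirely in comparing the two functors $\mathcal F_{ab}\to\mathcal Sets$ and in checking compatibility with the evaluation maps. First I would unwind both sides. On the one hand, by the description in section \ref{subsection:torified_CC-varieties}, $\mathcal L(X,T_\Delta)$ sends a finite abelian group $D$ to $\bdu_{\tau\in\Delta}\Hom(A_\tau^\times,D)$, where $A_\tau^\times$ is the group of units of the semi-group $A_\tau$ (since the torus $T_\tau$ in the toric torification is $\Spec\Z[A_\tau^\times]$, as recalled in paragraph \ref{section_toric_as_torified}). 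On the other hand, $\FDCC(\mathcal D(X))$ sends $D$ to $\bigcup_{x\in\mathcal D(X)^\top}\Hom(\O_{\mathcal D(X),x}^\times,D)$, the union taken inside $\Hom(\Spec\C[D],X_\C)$.

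The bridge between the two is Theorem \ref{thm_deitmar}: part \eqref{deitmar2} gives a bijection $\Psi:\Delta\to\mathcal D(X)^\top$ together with isomorphisms $A_\tau\simeq\O_{\mathcal D(X),\Psi(\tau)}$, hence $A_\tau^\times\simeq\O_{\mathcal D(X),\Psi(\tau)}^\times$. So termwise the two indexing sets and the two groups match up, and the coproduct $\bdu_{\tau\in\Delta}$ on the $\mathcal L$--side corresponds to the union $\bigcup_{x\in\mathcal D(X)^\top}$ on the Deitmar side. Then I would argue that this union is in fact disjoint: a homomorphism $\O_{X,x}^\times\to D$ determines the point $x$ as the image of the maximal ideal, because on the scheme level the corresponding map $\Spec\C[D]\to X_\C$ factors through the distinguished affine chart $\Spec\C[\O_{X,x}]$ and through the torus orbit attached to $x$ and not to any strictly larger chart — this is exactly the ``smallest cone'' phenomenon isolated in the lemma preceding Proposition \ref{prop_S_and_S_tilde_are_isomorphic}, and I would reuse that argument. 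So the two functors are canonically isomorphic as graded functors, the grading on both sides being by the rank of $A_\tau^\times$, which matches $\rk\tau=\rk\Psi(\tau)$ by Theorem \ref{thm_deitmar}\eqref{deitmar2}.

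Next I would check that the evaluation maps agree under this identification. Both $\ev_X$'s are built as the composite $\Hom(A_\tau^\times,D)\to\Hom(\C[A_\tau^\times],\C[D])\to\Hom(\Spec\C[D],X_\C)$, where the second arrow uses the open immersion of the torus (resp.\ the chart $\Spec\C[\O_{X,x}]$) into $X_\C$; since the isomorphism $A_\tau\simeq\O_{\mathcal D(X),\Psi(\tau)}$ of Theorem \ref{thm_deitmar} is induced precisely by the canonical inclusion $\spec A_\tau\hookrightarrow\mathcal D(X)$, after base change to $\C$ these two composites are literally the same map into $X_\C(\C[D])$. Finally, functoriality: given a toric morphism $f:X\to X'$ with underlying fan map $\tilde\psi$, both $\mathcal L(f)$ and $\FDCC(\mathcal D(f))$ act on the $\tau$--component by post-composition with the induced map $A_\tau^\times\to A_{\tilde\psi(\tau)}^\times$ (resp.\ the induced map on local unit groups), and these agree because $\Psi$ is compatible with inclusions of charts and with the functor $\mathcal D$; I would package this as the commutativity of the evident square of functors and conclude.

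The routine parts are the bookkeeping with coproducts and the verification that the evaluation diagrams commute. \textbf{The main obstacle} I anticipate is the disjointness of the union $\bigcup_{x}\Hom(\O_{X,x}^\times,D)$, i.e.\ showing that a $D$--point remembers which stratum it lies on; this is where one genuinely needs the minimal-cone argument (the same combinatorial lemma used in Proposition \ref{prop_S_and_S_tilde_are_isomorphic}), together with the exponent--$1$ hypothesis, which forces $A_\tau^\times$ to be a free abelian group of rank $\rk\tau$ so that $\Hom(A_\tau^\times,D)=D^{\rk\tau}$ matches the torus count on the $\mathcal L$--side and no collapsing occurs.
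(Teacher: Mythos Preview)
Your approach is essentially the paper's: use Theorem \ref{thm_deitmar}\eqref{deitmar2} to match $\Delta$ with $\mathcal D(X)^\top$ and $A_\tau$ with $\O_{\mathcal D(X),\Psi(\tau)}$, then observe that the complex varieties and evaluation maps coincide, and finally check naturality in $X$.

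The one place you diverge is in flagging the disjointness of $\bigcup_{x\in\mathcal D(X)^\top}\Hom(\O_{\mathcal D(X),x}^\times,D)$ as the main obstacle and proposing the minimal-cone lemma to handle it. This is unnecessary, and the paper does not invoke it. The union in the definition of $\FDCC$ sits inside $\Hom(\spec D_0,X)$ (morphisms of D--schemes), not inside $\Hom(\Spec\C[D],X_\C)$ as you wrote. Now $\spec D_0$ has a single point (the only prime ideal of $D_0=D\cup\{0\}$ is $\{0\}$, since every element of $D$ is a unit), so any D--scheme morphism $\spec D_0\to\mathcal D(X)$ has a well-defined image point $x$; this already partitions $\Hom(\spec D_0,X)$ as a coproduct indexed by $x\in\mathcal D(X)^\top$, and the identification of the $x$-fibre with $\Hom(\O_{\mathcal D(X),x}^\times,D)$ is the content of the reference to \cite{Deitmar2006}. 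So the disjointness is immediate from the D--scheme picture and requires no combinatorics on cones. The minimal-cone argument belongs to the comparison with Soul\'e's construction (Proposition \ref{prop_S_and_S_tilde_are_isomorphic}), where the issue is genuinely that a semi-group map $A_\tau\to\mu(R)_0$ does not a priori remember a single subcone; here that ambiguity never arises.
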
 
 
\begin{proof}
 Let $X$ be a toric variety with fan $\Delta$ and put $Y=\mathcal D(X)$. 
 Then we obtain the finite graded CC--gadgets $\mathcal L(X)=(\uline X,X_\C,\ev_X)$ and $\FDCC(Y)=(\uline Y, Y_\C,\ev_Y)$.
 By part \ref{deitmar2} of Theorem \ref{thm_deitmar}, 
 there is a bijection $\Psi:\Delta\to Y^\top$ such that $\rk\tau=\rk\Psi(\tau)$ and $A_\tau\simeq\O_{Y,\Psi(\tau)}$ for every $\tau\in\Delta$.
 Thus we obtain for every $l\geq0$ and every finite abelian group $D$ a bijection
 $$ \uline X^{(l)}(D) \ = \ \bigcup_{\begin{subarray}{c}\tau\in\Delta\\ \rk\tau=l\end{subarray}} \Hom(A_\tau^\times, D) 
                 \ \ \stackrel\sim\longrightarrow \bigcup_{\begin{subarray}{c}y\in Y^\top\\ \rk y=l\end{subarray}} \Hom(\O_{Y,y}^\times,D) \ = \ \uline Y^{(l)}(D)\;.$$
 Further, $Y_\C=Y\otimes_\Fun\C\simeq X\otimes_\Z\C=X_\C$.
 It is immediate that these isomorphisms commute with the evaluation maps $\ev_X$ and $\ev_Y$,
 and we thus obtain the desired isomorphism of CC--gadgets $\vphi_X: \mathcal L(X) \to \FDCC(Y)$.
 
 It follows from the naturality of definitions that given a toric morphism $f:X\to X'$,
 the diagram
  \[ 
  \xymatrix{
   \mathcal L(X) \ar[rr]^{\mathcal L(f)} \ar[d]_{\vphi_X} & & \mathcal L(X') \ar[d]_{\vphi_{X'}} \\
   \FDCC(Y) \ar[rr]^{\FDCC(g)} & & \FDCC(Y')
  }
 \]
 commutes, where $Y=\mathcal D(X)$, $Y'=\mathcal D(Y')$ and $g=\mathcal D(f)$. Thus we established an isomorphism of functors.
\end{proof} 

 This proposition together with Theorems \ref{thm_ll} and \ref{thm_deitmar} implies: 

\begin{corollary}
 \label{cor_D_to_CC}
 If $X$ is a connected integral D--scheme of finite type and exponent $1$, 
 then $\FDCC(X)$ is a CC--variety and $\FDCC(X)\otimes_\Fun\Z\simeq X\otimes_\Fun\Z$.
\end{corollary}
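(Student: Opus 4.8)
The plan is to deduce the statement by transporting the already-established facts about toric varieties and affinely torified varieties through the equivalences assembled above; this is essentially a diagram chase, with no genuine computation left. First I would invoke Theorem~\ref{thm_deitmar}(\ref{deitmar1}): since $X$ is a connected integral D--scheme of finite type and exponent $1$, the base extension $Y:=X\otimes_\Fun\Z$ is a toric variety and, because $-\otimes_\Fun\Z$ is inverse to $\mathcal D$, there is a canonical isomorphism $X\simeq\mathcal D(Y)$ of D--schemes. Applying $\FDCC$ to this isomorphism and then appealing to Proposition~\ref{prop_D_to_CC}, which identifies $\FDCC\circ\mathcal D$ with $\mathcal L$ on toric varieties, I would obtain an isomorphism of finite graded CC--gadgets $\FDCC(X)\simeq\FDCC(\mathcal D(Y))\simeq\mathcal L(Y)$, where $\mathcal L(Y)=\mathcal L(Y,T_\Delta)$ is built from the canonical torification $T_\Delta$ of $Y$.

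Next I would use that the torification $T_\Delta$ of a toric variety is affine (the proposition in Section~\ref{section_toric_as_torified}), so that $(Y,T_\Delta)$ is an affinely torified variety. Theorem~\ref{thm_ll} then applies and yields both that $\mathcal L(Y)$ is a CC--variety and that $\mathcal L(Y)\otimes_\Fun\Z\simeq Y$. Since being a CC--variety is a property inherited along isomorphisms of CC--gadgets (an isomorphism carries an open affine cover by affine CC--varieties to a cover of the same kind), it follows that $\FDCC(X)$ is a CC--variety. Finally, because the formation of the base extension $-\otimes_\Fun\Z$ is functorial on CC--varieties, the isomorphism $\FDCC(X)\simeq\mathcal L(Y)$ induces $\FDCC(X)\otimes_\Fun\Z\simeq\mathcal L(Y)\otimes_\Fun\Z\simeq Y=X\otimes_\Fun\Z$, which is the claim.

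The closest thing to an obstacle is the bookkeeping around what ``isomorphic functors'' actually buys: Proposition~\ref{prop_D_to_CC} is an isomorphism of functors $\mathcal L\simeq\FDCC\circ\mathcal D$, and I must make sure this natural isomorphism is compatible with the identifications used implicitly in Theorem~\ref{thm_ll} (that the grading matches and that the complex varieties are identified via $Y\otimes_\Z\C\simeq X\otimes_\Fun\C$), and that ``is a CC--variety'' together with the construction of $-\otimes_\Fun\Z$ are genuinely invariant under isomorphism of CC--gadgets, not merely under equality. Once these invariance points are spelled out, the two displayed chains of isomorphisms complete the proof.
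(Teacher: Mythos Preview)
Your argument is correct and follows exactly the route the paper intends: the corollary is stated as an immediate consequence of Proposition~\ref{prop_D_to_CC} together with Theorems~\ref{thm_ll} and~\ref{thm_deitmar}, and your proof simply unwinds that implication. The extra care you take about invariance of ``being a CC--variety'' and of $-\otimes_\Fun\Z$ under isomorphism of CC--gadgets is appropriate bookkeeping but introduces no new idea beyond what the paper sketches.
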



\subsection{From D--schemes to S--objects}
 In this section, we construct a functor $\FDS$ from the category of D--schemes of finite type to the category of S--objects.
 Let $X$ be a D--scheme of finite type. We proceed in two steps, similarly to section \ref{toric_S-objects}.

 In the first step, we define for every point $x\in X^\top$ an S--gadget $X_x=(\uline X_x,\mathcal A_x,e_x)$ as follows.
 For every $R\in\mathcal R$, we put $\uline X_x(R)=\Hom(\mathcal O_{X,x},\mu(R)_0)$, 
 the set of monoid homomorphisms from the local monoid $\mathcal O_{X,x}$ 
 to the multiplicative monoid $\mu(R)_0=\{0\}\cup\mu(R)$,
 we put $\mathcal A_x=\C[\mathcal O_{X,x}]$, the semi-group ring of $\mathcal O_{X,x}$ over $\C$, 
 and we define 
 $$ e_x(R): \ \Hom(\O_{X,x},\mu(R)_0) \ \longrightarrow \ \Hom(\C[\O_{X,x}], R\otimes_\Z\C) $$
 as the natural map.

 In the second step, we define the object $\FDS(X)=(\uuline X, \mathcal A_X, e_X)$ as follows.
 For every affine S--variety $V$, we put 
 $$ \uuline X(V) \ = \ \bigcup_{x\in X^\top}\,\Hom(V, X_x) \;, $$
 where the union is taken in $\Hom(V_\Z,X_\Z)$. We put $\mathcal A_X=\O_{X_\C}(X_\C)$, where $X_\C$ is the complexification of $X_\Z$,
 and we define
 $$ e_X(V): \ \bigcup_{x\in X^\top}\,\Hom(V, X_x) \ \longrightarrow \ \Hom(V_\C, X_\C) \ \longrightarrow \ \Hom(\mathcal A_X,\mathcal A_V) $$
 as the composition of the natural maps.

 Given a morphism $f:X\to X'$ between D--schemes of finite type, there is a natural way to define a morphism $\FDS(f):\FDS(X)\to\FDS(X')$ going
 through the steps of the construction of $\FDS$, similarly to the definition in section \ref{toric_S-objects}.

\begin{proposition}
 \label{prop_D_to_S}
 The functors $\mathcal S$ and $\FDS\circ\mathcal D$ from the category of toric varieties to the category of S--objects are isomorphic.
\end{proposition}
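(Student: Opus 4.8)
The plan is to mimic the proof of Proposition \ref{prop_D_to_CC}, matching the local data of $\mathcal S(X)$ and $\FDS(\mathcal D(X))$ cone by cone (resp.\ point by point) by means of Theorem \ref{thm_deitmar}(2). First I would fix a toric variety $X$ with fan $\Delta$ and set $Y=\mathcal D(X)$, so that $Y_\Z=Y\otimes_{\F_1}\Z\simeq X$ by Theorem \ref{thm_deitmar}(1); by Theorem \ref{thm_deitmar}(2) there is a bijection $\Psi:\Delta\to Y^\top$ together with monoid isomorphisms $A_\tau\simeq\O_{Y,\Psi(\tau)}$, canonically induced by the inclusions $\spec A_\tau\hookrightarrow\mathcal D(X)$ and compatible with the inclusion relations on $\Delta$ and the specialization relations on $Y^\top$; moreover, under the identification $Y_\Z\simeq X$ the affine open $\spec\O_{Y,\Psi(\tau)}\otimes_{\F_1}\Z=\Spec\Z[\O_{Y,\Psi(\tau)}]$ of $Y_\Z$ corresponds to $U_\tau=\Spec\Z[A_\tau]$.

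Next I would compare the first-step S--gadgets. The S--gadget $X_\tau$ of section \ref{toric_S-objects} and the S--gadget $Y_{\Psi(\tau)}$ appearing in the construction of $\FDS$ are built by the very same recipe --- the functor $R\mapsto\Hom(-,\mu(R)_0)$, the complex algebra $\C[-]$, and the natural evaluation --- applied to the isomorphic monoids $A_\tau$ and $\O_{Y,\Psi(\tau)}$; hence $A_\tau\simeq\O_{Y,\Psi(\tau)}$ induces an isomorphism of S--gadgets $\vphi_\tau:X_\tau\to Y_{\Psi(\tau)}$, and the last assertion of the previous paragraph makes $\vphi_\tau$ compatible with the immersions $X_\tau\hookrightarrow\mathcal T(U_\tau)$ and $Y_{\Psi(\tau)}\hookrightarrow\mathcal T(\spec\O_{Y,\Psi(\tau)}\otimes_{\F_1}\Z)$. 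By Lemma \ref{lemma_univ_prop} this shows that for every affine S--variety $V$ the bijections $\Hom(V,X_\tau)\to\Hom(V,Y_{\Psi(\tau)})$ are compatible with the inclusions into the common set $\Hom(V_\Z,X)=\Hom(V_\Z,Y_\Z)$.

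Then I would assemble the second step: since $\Psi$ is a bijection and the subsets $\Hom(V,X_\tau)$ and $\Hom(V,Y_{\Psi(\tau)})$ match inside $\Hom(V_\Z,X)$, taking unions yields a bijection $\uuline X(V)\to\uuline Y(V)$ natural in $V$, which together with the identity $\mathcal A_X=\O_{X_\C}(X_\C)\simeq\O_{Y_\C}(Y_\C)$ (coming from $X_\C\simeq Y_\C$) and compatibility with the evaluations $e_X$, $e_Y$ produces an isomorphism of S--objects $\vphi_X:\mathcal S(X)\to\FDS(Y)$. Finally, for a toric morphism $f:X\to X'$ I would unwind the constructions of $\mathcal S(f)$ and $\FDS(\mathcal D(f))$ cone by cone and check that the square
\[
\xymatrix{
\mathcal S(X)\ar[rr]^{\mathcal S(f)}\ar[d]_{\vphi_X} && \mathcal S(X')\ar[d]^{\vphi_{X'}}\\
\FDS(\mathcal D(X))\ar[rr]^{\FDS(\mathcal D(f))} && \FDS(\mathcal D(X'))
}
\]
commutes, which gives the claimed isomorphism of functors.

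The main obstacle will be the gluing in the second step: checking that the cone-wise isomorphisms $\vphi_\tau$ agree on overlaps, so that the unions $\bigcup_{\tau\in\Delta}\Hom(V,X_\tau)$ and $\bigcup_{y\in Y^\top}\Hom(V,Y_y)$ really correspond under the ambient identification $\Hom(V_\Z,X)=\Hom(V_\Z,Y_\Z)$. This uses the full force of Theorem \ref{thm_deitmar} --- that $\Psi$ respects the poset of affine opens, that the isomorphisms $A_\tau\simeq\O_{Y,\Psi(\tau)}$ are the canonical ones, and that $-\otimes_{\F_1}\Z$ intertwines the toric and the Deitmar pictures --- exactly as in the proof of Proposition \ref{prop_D_to_CC}.
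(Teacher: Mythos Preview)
Your proposal is correct and follows essentially the same two-step strategy as the paper's proof: first identify the S--gadgets $X_\tau$ and $Y_{\Psi(\tau)}$ via the monoid isomorphisms $A_\tau\simeq\O_{Y,\Psi(\tau)}$ from Theorem \ref{thm_deitmar}(2), then use compatibility with the directed-system/specialization structure to match the unions defining $\uuline X(V)$ and $\uuline Y(V)$ inside $\Hom(V_\Z,X)$, together with the obvious identification of the complex algebras. The paper phrases the gluing step as an isomorphism of the directed systems $\{A_\tau\}_{\tau\in\Delta}$ and $\{\O_{Y,y}\}_{y\in Y^\top}$ rather than via Lemma \ref{lemma_univ_prop}, but this is a cosmetic difference.
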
 

\begin{proof}
 Let $X$ be a toric variety with fan $\Delta$ and $Y=\mathcal D(X)$. 
 We will construct an isomorphism $\vphi_X:\mathcal S(X)\to\FDS(Y)$ by going through the steps of construction of the objects.
 
 In the first step, let $\tau\in\Delta$ and $y=\Psi(\tau)$, where $\Psi: \Delta\to Y^\top$ is the bijection from Theorem \ref{thm_deitmar}.
 Let $X_\tau$ and $Y_y$ be the associated S--gadgets. 
 By Theorem \ref{thm_deitmar}, part \ref{deitmar2}, we have that $A_\tau\simeq\O_{Y,y}$
 and consequently $\uline X_\tau(R)=\Hom(A_\tau,\mu(R)_0)\simeq\Hom(\O_{Y,y},\mu(R)_0)=\uline Y_y(R)$ for all $R\in\mathcal R$.
 Further, $\mathcal A_\tau=\C[A_\tau]\simeq\C[\O_{Y,y}]=\mathcal A_y$. 
 It is immediate that these isomorphisms commute with the evaluation maps $e_\tau$ and $e_y$, 
 and thus we obtain an isomorphism of S--gadgets $\vphi_\tau: X_\tau\to Y_y$.

 In the second step, we note that for $\tau'\subset\tau$, the image of the inclusion $A_\tau\hookrightarrow A_{\tau'}$ under the functor $\mathcal D$
 is the generalization map $\mathcal O_{Y,\Psi(\tau)}\hookrightarrow\mathcal O_{Y,\Psi(\tau')}$.
 Thus the directed systems $\{A_\tau\}_{\tau\in\Delta}$ and $\{\mathcal O_{Y,y}\}_{y\in Y^\top}$ are isomorphic
 and we have that for all affine S--varieties $V$,
 $$ \uuline X(V) \ = \bigcup_{\tau\in\Delta}\,\Hom(V,X_\tau) \ \simeq \ \bigcup_{y\in Y^\top}\,\Hom(V,Y_y) \ = \ \uuline Y(V) \;. $$
 Further, $\mathcal A_X=\O_{X_\C}(X_\C)\simeq\O_{Y_\C}(Y_\C)=\mathcal A_Y$ by Theorem \ref{thm_deitmar}, part \ref{deitmar1}.
 It is immediate that these isomorphisms commute with the evaluation maps $e_X$ and $e_Y$,
 and we thus obtain the desired isomorphism of S--objects $\vphi_X: \mathcal(X)\to \FDS(Y)$.

 By the analogy of the constructions of $\mathcal S$ and $\FDS$, it is clear that given a toric morphism $f:X\to X'$,
 the diagram
  \[ 
  \xymatrix{
   \mathcal S(X) \ar[rr]^{\mathcal S(f)} \ar[d]_{\vphi_X} && \mathcal S(X') \ar[d]_{\vphi_{X'}} \\
   \FDS(Y) \ar[rr]^{\FDS(g)} && \FDS(Y')
  }
 \]
 commutes, where $Y=\mathcal D(X)$, $Y'=\mathcal D(Y')$ and $g=\mathcal D(f)$. Thus we established an isomorphism of functors.
\end{proof}

 This proposition together with Theorems \ref{thm_soule} and \ref{thm_deitmar} implies: 

\begin{corollary}
 \label{cor_D_to_S}
 If $X$ is a connected integral D--scheme of finite type and exponent $1$, 
 then $\FDS(X)$ is an S--variety and $\FDS(X)\otimes_\Fun\Z\simeq X\otimes_\Fun\Z$.
\end{corollary}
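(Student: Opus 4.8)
The plan is to deduce Corollary \ref{cor_D_to_S} by combining Proposition \ref{prop_D_to_S} with Soul\'e's Theorem \ref{thm_soule} and Deitmar's Theorem \ref{thm_deitmar}, exactly in the style of the proof of Corollary \ref{cor_D_to_CC}. First I would invoke part \ref{deitmar1} of Theorem \ref{thm_deitmar}: since $X$ is a connected integral D--scheme of finite type and of exponent $1$, there is a (smooth---one must check this, see below) toric variety $X_0$ with $\mathcal D(X_0)\simeq X$, and $-\otimes_{\F_1}\Z$ is the inverse of $\mathcal D$, so $X\otimes_\Fun\Z\simeq X_0\otimes_\Z$, i.e. $X_0$ as a scheme over $\Z$. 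Then I would apply Proposition \ref{prop_D_to_S} to obtain an isomorphism of S--objects $\FDS(X)\simeq\FDS(\mathcal D(X_0))\simeq\mathcal S(X_0)$.

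Next I would feed this into Theorem \ref{thm_soule}: it asserts that for a smooth toric variety $X_0$, the S--object $\mathcal S(X_0)$ is an S--variety with $\mathcal S(X_0)\otimes_{\F_1}\Z\simeq X_0$. Transporting this isomorphism of S--objects through $\vphi_X$, I conclude that $\FDS(X)$ is an S--variety and that its base extension satisfies
\[
	\FDS(X)\otimes_\Fun\Z \ \simeq \ \mathcal S(X_0)\otimes_\Fun\Z \ \simeq \ X_0 \ \simeq \ X\otimes_\Fun\Z,
\]
where the last isomorphism is the one noted above coming from Theorem \ref{thm_deitmar}\,\eqref{deitmar1}. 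This chain gives both assertions of the corollary.

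The one subtle point---the main obstacle---is that Theorem \ref{thm_soule}, as stated, requires the toric variety to be \emph{smooth}, whereas Theorem \ref{thm_deitmar}\,\eqref{deitmar1} produces an arbitrary toric variety $X_0$ from the D--scheme $X$. So I would need to argue that the D--schemes arising here correspond precisely to smooth toric varieties, or else that the smoothness hypothesis in Theorem \ref{thm_soule} can be dropped for the argument to go through. In fact the cleanest route is to observe that Theorem \ref{thm_javier} already upgrades Soul\'e's result to \emph{all} affinely torified varieties---in particular all (not necessarily smooth) toric varieties, via their canonical affine torification $T_\Delta$---and that by Proposition \ref{prop_S_and_S_tilde_are_isomorphic} we have $\mathcal S(X_0)\simeq\mathcal S^\sim(X_0,T_{\Delta})$. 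Thus I would invoke Theorem \ref{thm_javier} in place of Theorem \ref{thm_soule}, which removes the smoothness restriction and completes the deduction for every connected integral D--scheme of finite type and exponent $1$. The remaining verifications---that the isomorphism $\vphi_X$ of Proposition \ref{prop_D_to_S} is compatible with the immersions into $\mathcal Ob(-)$ defining the S--variety structure, so that ``being an S--variety'' genuinely transports along $\vphi_X$---are routine and follow from the naturality already recorded in the proof of that proposition.
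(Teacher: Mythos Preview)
Your argument is correct and follows the same route the paper intends: combine Theorem~\ref{thm_deitmar}\,\eqref{deitmar1} to pass from $X$ to a toric variety $X_0$, use Proposition~\ref{prop_D_to_S} to identify $\FDS(X)\simeq\mathcal S(X_0)$, and then invoke the fact that $\mathcal S(X_0)$ is an S--variety with base extension $X_0$. The paper's one-line justification cites Theorems~\ref{thm_soule} and~\ref{thm_deitmar} together with Proposition~\ref{prop_D_to_S}.

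Your observation about smoothness is well taken and in fact sharpens the paper's citation. Theorem~\ref{thm_soule} as stated applies only to \emph{smooth} toric varieties, whereas Theorem~\ref{thm_deitmar}\,\eqref{deitmar1} produces an arbitrary toric variety from $X$. Your fix---replacing the appeal to Theorem~\ref{thm_soule} by Theorem~\ref{thm_javier} via Proposition~\ref{prop_S_and_S_tilde_are_isomorphic}---is exactly right and removes this restriction. This is arguably what the paper should have cited (note that the parallel Corollary~\ref{cor_D_to_CC} does cite the general Theorem~\ref{thm_ll} rather than a smooth-only result). So your proof is essentially the paper's intended argument, with the smoothness gap explicitly identified and closed.
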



\subsection{From CC--varieties to S--objects}
 \label{subsection:CC_to_S}
 In this section, we construct a functor $\FCCS$ from the category of CC--varieties to the category of S--objects. For this purpose, we have to restrict the class of morphisms between CC--varieties to those that satisfy property \textbf{S} described below.

 Let $X=(\uline X,X'_\C,\ev_X)$ be a CC--variety and let $\{X_j\}_{j\in J}$ be the family of all open affine CC--subvarieties
 $X_j=(\uline X_j, X_{j,\C},\ev_j)$ of $X$.
 Note that a priori, $X'_\C$ does not need to be equal to $X_\C=X\otimes_\Fun\C$.
 We define the S--object $\FCCS(X)$ in two steps.
 
 In the first step, we define S--gadgets $X_j^\sim=\FCCS^\sim(X_j)=(\uline X_j^\sim,\mathcal A_j^\sim,e_j^\sim)$ for every $j\in J$ as follows.
 For every $R\in\mathcal R$, put $\uline X_j^\sim(R)=\uline X_j(\mu(R))$.
 Put $\mathcal A_j^\sim=\O_{X_{j,\C}}(X_{j,\C})$ and put
 $$ e_j^\sim(R): \ \uline X_j(\mu(R)) \ \stackrel{\ev_j(\mu(R))}\longrightarrow \ \Hom(\mathcal A_j^\sim, \C[\mu(R)]) 
                 \ \longrightarrow \ \Hom(\mathcal A_j^\sim, R\otimes_\Z\C) \;. $$
 
 In the second step, we define the S--object $\FCCS(X)=(\uuline X,\mathcal A_X,e_X)$ as follows.
 For every $j\in J$ and every $R\in\mathcal R$, there is a morphism $\tau_j(R)$ given as the composition of canonical maps
 \[ 
  \ \uline X_j^\sim(R) \ = \ \uline X_j(\mu(R)) \subset \Hom(\Spec\Z[\mu(R)], X_{j,\Z}) \ \longrightarrow \ \Hom(\Spec R, X_{\Z}) \;.
 \]
 We do not know a priori whether $\tau_j(R)$ is an inclusion.
 But the morphism $(\vphi_j, \id): X_j^\sim\to\mathcal T(X_{j,\Z})$ yields---for the same reason as in Lemma \ref{lemma_univ_prop}---that 
 we have for every affine S--variety $V$ an (a priori not injective) map
 $$ \psi_{X_j}(V): \ \Hom(V,X_j^\sim) \ \longrightarrow \ \Hom(V_\Z, X_{j,\Z}) \ \subset \ \Hom(V_\Z,X_\Z) \;. $$
 Define $\uuline X(V)=\bigcup_{j\in J} \im\psi_{X_j}(V)\subset\Hom(V_\Z,X_\Z)$ and $\mathcal A_X=\O_{X_\C}(X_\C)$.
 Define
 $$ e_X(V): \ \uuline X(V) \ \subset \ \Hom(V_\Z,X_\Z) \ \longrightarrow \ \Hom(\mathcal A_X,\O_{V_\C}(V_\C))
         \ \longrightarrow \ \Hom(\mathcal A_X,\mathcal A_V) $$
 as the composition of taking complex global sections of a morphism $V_\Z\to X_\Z$ and the push forward 
 along the map $\iota_\C: \O_{V_\C}(V_\C)\hookrightarrow\mathcal A_V$ given by the universal property of $V$.

 Next we will define $\FCCS$ on morphisms. If $\varphi_j:X_j\to Y_j$ is a morphism between affine CC--varieties,
 then following through the definitions of the first step describes in a natural manner a morphism 
 $\varphi_j^\sim=\FCCS^{\sim}(\varphi_j): X_j^\sim\to Y_j^\sim$ of S--gadgets.
 In order to define a morphism $\FCCS(\varphi):\FCCS(X)\to\FCCS(Y)$ of S--objects associated to a gadget morphism $\varphi:X\to Y$ 
 between CC-varieties $X$ and $Y$, we need $\varphi$ to obey the following property. 

\begin{description}
  \item[S] \label{property:S} Let $\{X_j\}_{j\in J}$ be the family of all open affine CC--subvarieties whose images $Y_j$ under $\varphi$ is affine.
   Denote the corresponding restrictions of $\varphi$ by $\varphi_j:X_j\to Y_j$ 
        and let $\varphi_j^\sim:X_j^\sim\to Y_j^\sim$ be the associated morphism of S--gadgets.
        Let $V$ be an affine S--variety and let $\psi_{X_j}(V)$ be the morphism as defined above.
    Then there is a unique morphism $\overline\varphi_j$ such that the diagram
    \[
      \xymatrix{\Hom(V,X_j^\sim)\ar[rr]^{(\varphi_j^\sim)_\ast}\ar[d]^{\psi_{X_j}(V)}  && \Hom(V,Y_j^\sim) \ar[d]^{\psi_{Y_j}(V)} \\
              \im \psi_{X_j} \ar[rr]^{\overline\varphi_j} && \im \psi_{Y_j}} 
    \]
commutes.
\end{description}
 
 Given a morphism $\varphi:X\to Y$ of CC--varieties, we define $\FCCS(\varphi)=(\uuline\varphi,\varphi^\ast)$
 as follows. Let $V$ be an affine S--variety. With $\overline\varphi_j$ as in the definition, we define
 $$ \uuline\varphi(V)\ = \ \bigcup_{j\in J} \overline\varphi_j: \uuline X(V)\ = \ \bigcup_{j\in J}\; \im\psi_{X_j}(V) \ \longrightarrow \ 
    \bigcup_{j\in J}\; \im\psi_{X_j}(V) \ = \ \uuline Y(V) \;. $$
 Put $\varphi^\ast=\varphi_\C^\#:\mathcal O_{Y_\C}(Y_\C)\to\mathcal O_{X_\C}(X_\C)$.
 
\begin{remark}
 Property \textbf{S} is satisfied if for every $j\in J$, the map $\psi_{X_j}(V)$ is injective, 
 or if $X_j$ is an affine S--variety by applying the universal property of an S--variety.
 From the following proposition and from section \ref{affinely_torified_S-varieties}, 
 both the injectivity and the universal property is fulfilled if $\varphi$ is of the form $\varphi=\mathcal L(\varphi_\Z)$
 for a torified morphism $\varphi_\Z: (X_\Z,T)\to(Y_\Z,S)$ between affinely torified varieties.
 This means that the functor $\mathcal L$ is well-defined as functor from the category of affinely torified varieties
 to the \emph{category of CC--varieties} considered with the class of morphism from Definition \ref{def_morphism_of_CC-varieties}.
 Note further that the essential image of $\FDCC$ is contained in the category of CC--varieties since $\FDCC$ is essentially isomorphic
 to the composition of the base extension to toric varieties, considered as affinely torified varieties, and $\mathcal L$.
\end{remark}
 
 \begin{proposition} 
 \label{prop_CC_to_S}
 The functors $\mathcal S$ and $\FCCS\circ\mathcal L$ from the category of affinely torified varieties to the category of S--objects are isomorphic.
\end{proposition}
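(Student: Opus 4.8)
The plan is to fix an affinely torified variety $(X,T)$ with maximal torified atlas $\{U_j\}_{j\in J}$ (with $I_j\subseteq I$ such that $U_j=\decomp_{i\in I_j}T_i$) and to produce, naturally in $(X,T)$, an isomorphism of S--objects $\mathcal S(X,T)\xrightarrow{\ \sim\ }\FCCS(\mathcal L(X,T))$ by running the two--step constructions of $\mathcal S=\mathcal S^\sim$ and of $\FCCS\circ\mathcal L$ side by side and checking that they agree under canonical identifications. The inputs I would rely on are Theorem \ref{thm_ll}, which gives that $\mathcal L(X,T)$ is a CC--variety with $\mathcal L(X,T)\otf\Z\cong X$ and (from its proof) that the open affine CC--subvarieties of $\mathcal L(X,T)$ are precisely the $X_j:=\mathcal L(U_j,T_j)$ --- each an affine CC--variety with $X_{j,\Z}\cong U_j$ compatibly with $U_j\hookrightarrow X$ --- and Theorem \ref{thm_javier} (more precisely, the second step of its proof applied to the affine torified variety $(U_j,T_j)$), which gives that the first--step S--gadget attached to $(U_j,T_j)$ is an \emph{affine S--variety} whose base extension is $U_j$. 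Since the complex variety of $\mathcal L(X,T)$ is $X\otz\C=X\otimes_\Fun\C$, the complex varieties in the two constructions agree, and on both sides $\mathcal A_X=\O_{X_\C}(X_\C)$.

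First I would handle the ``first step'' of the two constructions: for each $j\in J$, the S--gadget denoted $X_j^\sim$ in the definition of $\FCCS$ (i.e.\ $\FCCS^\sim(X_j)$) and the S--gadget denoted $X_j^\sim$ in section \ref{affinely_torified_S-varieties}. By the definition of $\mathcal L$ we have $\uline X_j(D)=\coprod_{i\in I_j}\Hom(A_i,D)$, so the underlying functor of $\FCCS^\sim(X_j)$ sends $R\in\mathcal R$ to $\uline X_j(\mu(R))=\coprod_{i\in I_j}\Hom(A_i,\mu(R))$, which is exactly the functor used in section \ref{affinely_torified_S-varieties}; both complex algebras are $\O_{U_{j,\C}}(U_{j,\C})$; and unwinding $\ev_j$ shows that the evaluation sends $g\in\Hom(A_i,\mu(R))$ to the composite $\O_{U_{j,\C}}(U_{j,\C})\to\O_{U_{j,\C}}(T_{i,\C})=\C[A_i]\xrightarrow{\C[g]}\C[\mu(R)]\hookrightarrow R\otz\C$, which is precisely the evaluation defining $X_j^\sim$ in section \ref{affinely_torified_S-varieties}. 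So this step should give that the two S--gadgets $X_j^\sim$ are literally equal.

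Next I would handle the ``second step'', the gluing. The crucial point is that for every affine S--variety $V$ the map $\psi_{X_j}(V):\Hom(V,X_j^\sim)\to\Hom(V_\Z,X_{j,\Z})\subseteq\Hom(V_\Z,X_\Z)$ is \emph{injective}: since $X_j^\sim$ is an affine S--variety (Theorem \ref{thm_javier}) with structural immersion $(\vphi_j,\id):X_j^\sim\to\mathcal T(X_{j,\Z})$, the map $\psi_{X_j}(V)$ factors as the injection $\Hom(V,X_j^\sim)\hookrightarrow\Hom(V,\mathcal T(X_{j,\Z}))=\Hom(V_\Z,X_{j,\Z})$ followed by the injection induced by the open immersion $X_{j,\Z}\hookrightarrow X_\Z$. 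Hence $\im\psi_{X_j}(V)$ is canonically identified with $\Hom(V,X_j^\sim)$, and under $X_\Z\cong X$ (compatibly with $X_{j,\Z}\cong U_j$) it matches the subset of $\Hom(V_\Z,X)$ used in section \ref{affinely_torified_S-varieties} to define $\mathcal S(X,T)$. Taking the union over $j\in J$, and using that the evaluation $e_X(V)$ is in both cases the natural map $\uuline X(V)\subseteq\Hom(V_\Z,X)\to\Hom(\mathcal A_X,\mathcal A_V)$, yields the isomorphism $\mathcal S(X,T)\cong\FCCS(\mathcal L(X,T))$. For naturality I would note that, by the remark preceding the proposition together with the injectivity just established, $\mathcal L(f)$ satisfies property \textbf{S} for every affinely torified morphism $f:(X,T)\to(X',T')$, so $\FCCS(\mathcal L(f))$ is defined; restricted to the affine piece $X_j$ it agrees (via the first step) with the piece $\mathcal S^\sim(f_{|U_j})$ entering the construction of $\mathcal S(f)$, and on complex algebras both equal $f_\C^\#$, which gives the compatibility square.

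I expect the main obstacle to be exactly the injectivity of $\psi_{X_j}(V)$ --- equivalently, showing that replacing $\Hom(V,X_j^\sim)$ by its image in $\Hom(V_\Z,X_\Z)$ in the construction of $\FCCS$ loses no information. This is the one genuinely non--formal point, and it is where Theorem \ref{thm_javier} is used essentially: what matters is not merely that $X_j^\sim$ is an S--gadget but that it is an \emph{affine S--variety}. Everything else is bookkeeping of canonical identifications, together with the fact --- already recorded in the proof of Theorem \ref{thm_ll} --- that the maximal torified atlas of $X$ exhausts the open affine CC--subvarieties of $\mathcal L(X,T)$.
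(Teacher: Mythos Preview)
Your proposal is correct and follows essentially the same two--step route as the paper: identify $\FCCS^\sim(\mathcal L(U_j,T_j))$ with the S--gadget $X_j^\sim$ of section \ref{affinely_torified_S-varieties}, then take the union inside $\Hom(V_\Z,X)$. The paper's write--up simply records the chain $\uuline X(V)=\bigcup_j\Hom(V,X_j)=\bigcup_j\Hom(V,Y_j)=\uuline Y(V)$ and leaves the injectivity of $\psi_{X_j}(V)$ implicit in the notation, whereas you make it explicit; your extra care there is well placed.

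One small sharpening: you do not need the full force of Theorem \ref{thm_javier} for the injectivity of $\psi_{X_j}(V)$. Once the first step identifies $\FCCS^\sim(\mathcal L(U_j,T_j))$ with the gadget $X_j^\sim$ of section \ref{affinely_torified_S-varieties}, the canonical morphism $\iota_j:X_j^\sim\hookrightarrow\mathcal T(U_j)$ is already shown there to be an \emph{immersion} (using Lemma \ref{lemma:decomp}), and Lemma \ref{lemma_univ_prop}(2) then gives $\Hom(V,X_j^\sim)\hookrightarrow\Hom(V_\Z,U_j)\subset\Hom(V_\Z,X)$ directly. So the ``genuinely non--formal point'' is handled by the immersion verified in section \ref{affinely_torified_S-varieties} rather than by the S--variety property; Theorem \ref{thm_javier} is not needed for this proposition.
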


\begin{proof}
 Let $(X,T)$ be an affinely torified variety with maximal torified atlas $\{U_j\}_{j\in J}$.
 Let $T_j$ be the restriction of $T$ to $U_j$, which is a torification of $U_j$.
 Put $Y=(\uline Y,Y_\C,\ev_Y)=\mathcal L(X,T)$.
 Then $\{Y_j\}_{j\in J}$ with $Y_j=(\uline Y_j,Y_{j,\C},\ev_j)=\mathcal L(U_j,T_j)$ is the family
 of all open affine CC--subvarieties of $Y$ since they are precisely those open CC--subgadgets whose functors represent the right counting function.
 We show in two steps that $\mathcal S(X,T)\simeq\FCCS\circ\mathcal L(X,T)$.

 In the first step, we show that $X_j=(\uline X_j,\mathcal A_j,e_j)$ as defined in section \ref{affinely_torified_S-varieties}
 is isomorphic to $\FCCS^\sim(Y_j)=(\uline Y_j^\sim,\mathcal A_j^\sim,e_j^\sim)$ for every $j\in J$.
 For all $R\in\mathcal R$, we have equalities
 $$\uline X_j(R) \ = \ \coprod_{j\in T_j} \; \Hom(A_i^\times,\mu(R)) \ = \ \uline Y_j(\mu(R)) \ = \ \uline Y_j^\sim(R) $$
 and $\mathcal A_j=\O_{U_{j,\C}}(U_{j,\C})=\mathcal A_j^\sim$.
 This defines the desired isomorphism.

 In the second step, we show that $\mathcal S(X,T)=(\uuline X,\mathcal A_X,e_X)$ is isomorphic to the S-object $\FCCS(Y)=(\uuline Y,\mathcal A_Y,e_Y)$.
 For all affine S--varieties $V$, we have equalities
 $$ \uuline X(V) \ = \ \bigcup_{j\in J} \; \Hom(V,X_j) \ = \ \bigcup_{j\in J} \; \Hom(V,Y_j) \ = \ \uuline Y(V) $$
 and $\mathcal A_X=\O_{X_\C}(X_\C)=\mathcal A_Y$. 
 This defines the desired isomorphism, which we denote by $\vphi_{X,T}:\mathcal S(X,T)\to\FCCS\circ\mathcal L(X,T)$.
 
 By similarity of definition it follows that $\vphi_{X,T}$ is functorial in $(X,T)$, i.e.\ that for every torified morphism $f:(X,T)\to(X',T')$,
 the diagram
 \[ \xymatrix{\mathcal S(X,T) \ar[rrr]^{\mathcal S(f)} \ar[d]_{\vphi_{X,T}} &&& \mathcal S(X',T') \ar[d]_{\vphi_{X',T'}} \\
       \FCCS\circ\mathcal L(X,T) \ar[rrr]^{\FCCS\circ\mathcal L(f)} &&& \FCCS\circ\mathcal L(X',T')}  \]
 commutes. Thus we established an isomorphism of functors. 
\end{proof}

\begin{remark}
 \label{remark_image_of_FCCS}
 As consequence of Proposition \ref{prop_CC_to_S} and Theorem \ref{thm_javier}, we see that for every CC--variety $X$ in the essential image of $\mathcal L$,
 the S--object $\FCCS(X)$ is an S--variety such that $\FCCS(X)_\Z\simeq X_\Z$.
 It is, however, not clear if this holds true if $X$ is an arbitrary CC--variety.

 Namely, there are two problems. 
 For simplicity, we assume that $X$ is an affine CC--variety with canonical immersion $\iota: X\to\mathcal G(X_\Z)$.
 
 The first problem is the following. 
 We have $X_\Z=\Spec B$ for some ring $B$. Put $X^\sim=\FCCS^\sim(X)$. 
 Then there is a canonical morphism $\iota^\sim: X^\sim\to\mathcal T(X_\Z)$, but it is not clear if the map
 $$ \xymatrix{\uline\iota^\sim(R):\ \uline X^\sim(R) \ = \ \uline X(\mu(R))\ar@{^(->}[rr]^>>>>>>>>>{\uline\iota(\mu(R))}
               &&\Hom(B,\Z[(\mu(R)])\ar[r]&\Hom(B,R)} $$
 is injective for all $R\in\mathcal R$ 
 (here $\uline X$, $\uline X^\sim$, $\uline\iota$ and $\uline\iota^\sim$ denote the usual functors and natural transformations).
 
 The second problem is verifying the universal property of an S--variety. 
 This is, given a scheme $V$ of finite type over $\Z$ and a morphism of S--gadgets $\vphi:X^\sim\to\mathcal T(V)$,
 we seek a morphism of schemes $\vphi_\Z: X_\Z\to V$ such that $\vphi=\mathcal T(\vphi_\Z)\circ\iota^\sim$.
 This would be implied by the universal property for $X$ if we could extend the functor $\FCCS$ 
 to a functor $\FCCS'$ from CC--gadgets to S--objects such that 
 $$ \FCCS'\Biggl(\def\objectstyle{\scriptstyle}\def\labelstyle{\scriptstyle}\vcenter{
                 \xymatrix@-1pc{X\ar[r]^<<<\iota\ar[rd]_\psi&\mathcal G(X_\Z)\ar[d]^{\mathcal G(\psi_\Z)} \\ &\mathcal G(V)}}\Biggr) 
    \quad = \quad \Biggl(\def\objectstyle{\scriptstyle}\def\labelstyle{\scriptstyle}\vcenter{
               \xymatrix@-1pc{X^\sim\ar[r]^{\iota^\sim}\ar[rd]_\vphi&\mathcal G(X_\Z)\ar[d]^{\mathcal T(\psi_\Z)} \\ &\mathcal T(V)}}\Biggr)  $$
 for some morphism $\psi: X\to\mathcal G(V_\Z)$. The uniqueness of $\psi_\Z$ would follow from the existence of a left inverse functor to $\FCCS'$.

 However, the definition of $\FCCS$ relies strongly on the defining property of a CC--variety and we do not see whether there is a way to extend $\FCCS$
 to all CC--gadgets with the desired property. We will discuss two attempts in this direction in the following two paragraphs 
 \ref{subsubsection:CC_to_S} and \ref{subsubsection:S_to_CC}.
\end{remark}


\subsubsection{From CC--gadgets to S--objects}
\label{subsubsection:CC_to_S}
 There is a natural definition for a functor $\FCCS'$ from CC--gadgets to S--objects, 
 which, however, does not meet the requirements of Remark \ref{remark_image_of_FCCS}.
 
 Let $X=(\uline X, X_\C,\ev_X)$ be a CC--gadget. We define the S--object $\FCCS'(X)=(\uuline X,\mathcal A_X,e_X)$ as follows.
 If $V$ is an affine S--variety, where $V_\Z\simeq \Spec B$ and $(\uline \iota,\iota_\C^\ast):V\to \mathcal T(V_\Z)$ is the canonical immersion, 
 then put $\uuline X(V) = \uline X(\mu(B))$. 
 Put $\mathcal A_X=\O_{X_\C}(X_\C)$ and define for $\psi\in\uline X(\mu(B))$,
 \[
  \xymatrix{
   e_X(V)(\psi):\ \ \mathcal A_X \ \ \ar[rr]^{\ \ \ev_x(V)(\psi)^\#} & & \ \ \C[\mu(B)]\ \ \ar[r] & \ \ B\otimes_\Z\C\ \ \ar[r]^{\ \ \iota_\C^\ast}&\ \ \mathcal A_V\;.
  }
 \]
 If $\vphi=(\uline\vphi,\vphi_\C):X\to X'$ is a morphism of CC--gadgets, 
 define the morphism of S--objects $\FCCS'(\vphi)=(\uuline \vphi,\vphi_\C^\#)$ as follows.
 For $V$ as above, put $\uuline\vphi(V)=\uline\vphi(\mu(B))$ and
 let $\vphi_\C^\#$ be the morphism between global sections.
 One easily verifies that $(\uuline \vphi,\vphi_\C^\#)$ is indeed a morphism using that $(\uline\vphi,\vphi_\C)$ is one. 
 
\begin{remark}
 \label{remark_FCCS'_differs_from_FCCS}
 One can show that for a torified variety $(X,T)$ that is affine and has maximal torified atlas $\{U_i\}_{i\in I}$
 with $U_0=X$, the S--gadgets $X_0$ (as defined in section \ref{affinely_torified_S-varieties}) 
 and $\FCCS^\sim\circ\mathcal L(X,T)$ are isomorphic.
 Further, one can show that there is a natural inclusion of functors $\FCCS'\Rightarrow\FCCS$,
 when restricted to the category of CC--varieties.
 
 The most basic example of $X=\Gm$, however, shows that $\FCCS'$ is not isomorphic to $\FCCS$
 if restricted to the category of CC--varieties.
 Consider $\Gm$ as a toric variety with fan $\Delta=\{0\}$.
 In the usual notation (cf.\ sections \ref{section_toric_as_torified} and \ref{toric_S-objects}), 
 $A_0$ is an infinite cyclic group and $X_0=(\uline X_0,\mathcal A_X,e_X)$ is an affine S--variety with $(X_0)_\Z\simeq\Gm$.
 Let $\uuline  Y$ and $\uuline X$ be the functors of $\FCCS'\circ\mathcal L(X)$ and $\FCCS\circ\mathcal L(X)\simeq\mathcal S(X)$ 
 (cf.\ Proposition \ref{prop_CC_to_S}), respectively.
 Then 
 $$ \uuline Y(X_0) \ = \ \Hom(A_0,\mu(\Z[A_0])) \ = \ \Hom(A_0,\{\pm1\}) \ = \ \{\pm1\} \;. $$
 On the other hand, 
 $$ \uuline X(X_0) \ = \ \Hom(X_0,X_0) \ \hookrightarrow \ \Hom(\Gm,\Gm) \ = \ \Hom(\Z[A_0],\Z[A_0]) \ = \ \{\pm a^m\}_{m\in\Z}\;, $$ 
 where the inclusion is given by extension of scalars to $\Z$ (cf.\ Lemma \ref{lemma_univ_prop}).
 One sees that $\uuline Y(X_0)\subset \uuline X(X_0)$. We will show that this inclusion is proper. 

 Let $m$ be an integer and let $\vphi_m: A_0\to A_0$ map $a$ to $a^m$. 
 We define a morphism $\psi_m=(\uline \psi_m,\psi_{m,\C}):X_0\to X_0$ as follows.
 For $R\in\mathcal R$, we have $\uline X_0(R)=\Hom(A_0,\mu(R))$. Put
 $$\begin{array}{cccc}
     \uline\psi_m(R): & \Hom(A_0,\mu(R)) & \longrightarrow & \Hom(A_0,\mu(R)) \\
                    & \chi         & \longmapsto     & \chi\circ\vphi_m
    \end{array} $$
 and let $\psi_{m,\C}: \C[A_0]\to\C[A_0]$ be the $\C$-linear homomorphism that restricts to $\vphi_m$.
 It is clear that $\psi_m$ is indeed a morphism of S--gadgets for every $m\in\Z$
 and that $(\psi_m)_\Z^\#: \Z[A_0]\to\Z[A_0]$ is the restriction of $\psi_{m,\C}$ to $\Z[A_0]$.
 Concerning our question, we see now that $(\psi_m)_\Z^\#(A_0)\not\subset\mu(\Z[A_0])=\{\pm1\}$ unless $m=0$.
 
 Thus we have shown that $\FCCS'$ does not extend $\FCCS$. 
 From \cite[Prop.\ 4]{Soule2004} it follows that $\FCCS'(\Gm)$ cannot be an S--variety.
 Regarding the second problem of Remark \ref{remark_image_of_FCCS}, 
 note that it holds neither true that for a scheme $X$ of finite type over $\Z$, the S--objects 
 $\FCCS'(\mathcal G(X))$ and $\mathcal{O}b(X)$ are isomorphic. Namely, their functors $\uuline X'$ and $\uuline X$, respectively,
 differ. If $V$ is an affine S--variety with $V_\Z\simeq\Spec B$, then in general
 $$ \uuline X'(V) \ = \ \Hom(\Spec\Z[\mu(B)],X) \quad \neq \quad \Hom(\Spec B,X) \ = \ \uuline X(V) \;.$$
\end{remark}


\subsubsection{From S--objects to CC--gadgets}
\label{subsubsection:S_to_CC}
 There is also a natural way to define a functor $\FSCC$ from the category of S--objects to the category of CC--gadgets.

 Let $X=(\uuline X,\mathcal A_X,e_X)$ be an S--object. 
 Then we define the CC--gadget $\FSCC(X)=(\uline X, X_\C,\ev_X)$ as follows.
 For a finite abelian group $D$, put $V_D=\mathcal T(\Spec\Z[D])$, which is an affine S--variety by \cite[Prop.\ 2]{Soule2004}
 and since $\Z[D]\in\mathcal R$. Put $\uline X(D)=\uuline X(V_D)$.
 Let $\mathcal N_X$ be the nilradical of $\mathcal A_X$. Put $X_\C=\Spec(\mathcal A_X/\mathcal N_X)$, which is a complex variety.
 The evaluation map is defined as
 $$\begin{array}{cccl}
  \ev_X: & \uuline X(V_D) & \longrightarrow & \Hom(\mathcal A_X, \C[D]) \ = \ \Hom(\Spec\C[D],X_\C)\;. \\
         &    \psi        & \longmapsto     & \hspace{0,5cm} e_X(D)(\psi)
 \end{array}$$

\begin{remark}\label{remark:S_to_CC_problems}
 There are several remarks in order concerning the ``naturality'' of definition.
  Since we stay with the original definition of a CC--gadget in \cite{Connes2008},
  we only allow complex varieties, i.e.\ reduced schemes of finite type over $\C$, in the definition of a CC--gadget.
  Therefore, we have to divide out the nilradical. 
  One can, however, extend Connes-Consani's definition by allowing arbitrary schemes of finite type over $\C$
  and simply define $X_\C$ as the spectrum of $\mathcal A_X$.  

  If $X$ is an S--variety representing a scheme that is not affine, we obtain a complex variety $X_\C$ which is affine.
  One could, however, exchange the complex algebra by a scheme of finite type over $\C$ in the definitions of an S--gadget and an S--object,
  and try to recover the results of Soul\'e's paper \cite{Soule2004}. 
  Then one could simply define to take the same complex scheme for $\FSCC(X)$.
\end{remark}

\begin{remark}
 Unfortunately, the different nature of Soul\'e's and Connes-Consani's geometries over $\Fun$ leads to a misbehavior of $\FSCC$ 
 even if the suggested changes are made, as can be seen in the example of $X=\Gm$.

 In the same notation as in Remark \ref{remark_FCCS'_differs_from_FCCS},
 let $A_0$ be the infinite cyclic group and $X_0$ the affine S--variety associated to $X$.
 Let $\uuline X$ be the functor of $\mathcal S(X)$ and let $\uline X$ be the functor of $\FSCC\circ\mathcal S(X)$.
 For a finite cyclic group $D$ and $V_D$ as above, we have $\uline X(D)=\uuline X(V_D)=\Hom(V_D,X_0)$.
 Base extension from $\F_1$ to $\Z$ defines the inclusion $\Hom(V_D,X_0)\hookrightarrow\Hom(\Z[A_0],\Z[D])$ (cf.\ Lemma \ref{lemma_univ_prop}).
 Using that $\mu(\Z[D])=\Z[D]^\times$ for finite abelian groups, 
 one can show that conversely every morphism $\Z[A_0]\to\Z[D]$ defines a morphism $V_D\to X_0$.
 Thus we see that 
 $$ \uline X(D) \ = \ \Hom(A_0,\mu(\Z[D])) \ = \ \mu(\Z[D]) \ = \ D\amalg -D \;. $$
 This differs from the CC--variety $\mathcal L(\Gm)=(\uline\Gm,\mathbb G_{m,\C},\ev_{\Gm})$ since $\uline\Gm(D)=D$,
 and we see that $\mathcal L$ and $\FSCC\circ\mathcal S$ are not isomorphic.
 Furthermore, the counting function of $\FSCC(X)$ differs from the counting function of $\mathcal L(\Gm)$,
 so $\FSCC(X)$ is not even a candidate for a CC--variety representing $\Gm$ that produces the right counting function.
 
 In particular, one verifies now easily that neither $\FCCS'\circ\FSCC$ nor $\FSCC\circ\FCCS'$ nor $\FSCC\circ\FCCS$ 
 is isomorphic to the identity functor
 of the category of S--objects or the category of CC--gadgets, respectively--even if the changes are considered as suggested in the previous remark.
\end{remark}


\subsection{Putting pieces together}

 Finally, we subsume the results of this section in a diagram. In this section we consider only morphisms that satisfy property \textbf{S} from section \ref{subsection:CC_to_S} in the category of CC--varieties.

\nopagebreak{
\begin{theorem}
 \label{thm_large_diagram}
 The following diagram commutes up to natural isomorphism of functors
 (arrows with label ``$i$'' are the canonical inclusion as subcategories
  and the arrow with label ``$f$'' is the forgetful functor).
\[
 \xy (115,10)*+[F]{\text{\rm S--objects}}="B";
  (65,10)*+[F]{\begin{minipage}{7em}\rm \center Schemes over $\Z$ \end{minipage}}="C";
  (65,54)*+[F]{\begin{minipage}{17em}\rm \center Connected separated integral D--schemes\\ of finite type and exponent 1 \end{minipage}}="D";
  (65,36)*+[F]{\begin{minipage}{6em}\rm \center Toric varieties\end{minipage}}="E";
  (115,23)*+[F]{\text{\rm S--varieties}}="G";
  (65,23)*+[F]{\begin{minipage}{10.5em}\rm \center Affinely torified varieties \end{minipage}}="I";  
  (15,10)*+[F]{\begin{minipage}{5.5em}\rm \center CC--varieties\end{minipage}}="J";  
  {\ar@/_4pc/_>>>>>>>>>>>>>>>>>>>>>>>>{\mathcal{F}_{D\to CC}} "D"; "J"};
  {\ar@/^3pc/^<<<<<<<<<<{\mathcal{F}_{D\to S}} "D"; "G"};
  {\ar@<1.1ex>_>>>>>{\sim}^>>>>>{\mathcal{D}} "E"; "D"};
  {\ar@<1.1ex>^<<<<<{-\otf \Z} "D"; "E"};
  {\ar_{i} "E"; "I"};
  {\ar_{f} "I"; "C"};
  {\ar^{\mathcal{S}} "I"; "G"};
  {\ar_{\mathcal{L}} "I"; "J"};
  {\ar^{i} "G"; "B"};
  {\ar^{-\otf\Z} "G"; "C"};
  {\ar_{-\otf\Z} "J"; "C"};
  {\ar@/_3pc/^{\mathcal{F}_{CC\to S}} "J"; "B"};
 \endxy
\]
\end{theorem}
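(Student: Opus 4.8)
The plan is to verify that every elementary cell of the diagram commutes up to natural isomorphism; since the whole diagram is pasted together from these cells, and since all the natural isomorphisms invoked below are the canonical ones arising ``by naturality of the definitions'', they are mutually compatible and assemble to the asserted essential commutativity. The first reduction is this: by Theorem~\ref{thm_deitmar}, part~\ref{deitmar1}, the functor $\mathcal D$ is an equivalence of categories with quasi-inverse $-\otimes_\Fun\Z$; consequently, any cell whose source is the node of connected integral D--schemes of finite type and exponent~$1$ may be checked after precomposition with $\mathcal D$, that is, on the subcategory of toric varieties, embedded into affinely torified varieties through $i$. This reduces the statement to cells over toric varieties and over affinely torified varieties.

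These remaining cells are exactly the content of the propositions of Section~\ref{section:geometries}. Proposition~\ref{prop_D_to_S}, combined with Proposition~\ref{prop_S_and_S_tilde_are_isomorphic} (which identifies the restriction of $\mathcal S$ along $i$ with Soul\'e's original functor), yields $\mathcal S\circ i\simeq\FDS\circ\mathcal D$; Proposition~\ref{prop_D_to_CC} yields $\mathcal L\circ i\simeq\FDCC\circ\mathcal D$; and Proposition~\ref{prop_CC_to_S} yields $\mathcal S\simeq\FCCS\circ\mathcal L$ on affinely torified varieties, which gives commutativity of the triangle from affinely torified varieties through S--varieties and through CC--varieties back to S--objects, once we observe --- using Theorem~\ref{thm_javier} together with Remark~\ref{remark_image_of_FCCS} --- that both $\mathcal S(X,T)$ and $\FCCS(\mathcal L(X,T))$ are genuine S--varieties, so that the inclusion $i$ of S--varieties into S--objects applies. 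Chaining these three isomorphisms over toric varieties and transporting back along the equivalence $\mathcal D$ produces $\FCCS\circ\FDCC\simeq\FDS$ on D--schemes, hence the cell through the node of D--schemes (and, after composing with the inclusion into S--objects, its version landing in S--objects). The two cells against the node of schemes over~$\Z$ are precisely the ``more precisely'' assertions: Theorem~\ref{thm_ll} gives $(-\otimes_\Fun\Z)\circ\mathcal L\simeq f$ and Theorem~\ref{thm_javier} gives $(-\otimes_\Fun\Z)\circ\mathcal S\simeq f$ on affinely torified varieties, Theorem~\ref{thm_soule} treats the purely toric case, and Corollaries~\ref{cor_D_to_CC} and~\ref{cor_D_to_S} give $(-\otimes_\Fun\Z)\circ\FDCC\simeq(-\otimes_\Fun\Z)$ and $(-\otimes_\Fun\Z)\circ\FDS\simeq(-\otimes_\Fun\Z)$ on D--schemes. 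Every other cell of the diagram is a paste of those already treated (and the cell relating $-\otimes_\Fun\Z\colon D\to E$ with $\mathcal D$ is the equivalence statement itself), so commutativity propagates formally.

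The step I would treat most carefully, rather than as bookkeeping, is that the diagram is stated in the categories of \emph{varieties} over $\Fun$ (S--varieties and CC--varieties) and, on the Connes--Consani side, only for morphisms satisfying property~\textbf{S}; so one must confirm that each functor actually takes values in the stated subcategory and is defined on the restricted class of morphisms. This is where Theorems~\ref{thm_ll} and~\ref{thm_javier}, Corollaries~\ref{cor_D_to_CC} and~\ref{cor_D_to_S}, and the discussion in Remark~\ref{remark_image_of_FCCS} enter: $\mathcal L$ sends affinely torified varieties with affinely torified morphisms to CC--varieties with morphisms of property~\textbf{S}; $\FDCC$ lands in CC--varieties because it is essentially the composite of the base extension to toric varieties, viewed as affinely torified varieties, with $\mathcal L$; $\FCCS$ is well defined on morphisms of property~\textbf{S} and carries CC--varieties in the essential image of $\mathcal L$ to S--varieties; and $\mathcal S$ and $\FDS$ land in S--varieties by Theorem~\ref{thm_javier} and Corollary~\ref{cor_D_to_S} (together with Theorem~\ref{thm_soule}). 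Once this membership is granted, no new computation is needed: each natural isomorphism above has already been constructed in the quoted result, and their compatibility is immediate since each is the canonical one.
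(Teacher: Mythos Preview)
Your proposal is correct and follows essentially the same route as the paper: decompose the diagram into elementary cells and invoke, for each cell, the corresponding proposition or theorem from Sections~\ref{section_CC-gadgets}--\ref{section:geometries} (the paper labels five subdiagrams A--E and cites Theorem~\ref{thm_deitmar}, Propositions~\ref{prop_D_to_CC}, \ref{prop_D_to_S}, \ref{prop_CC_to_S}, Corollaries~\ref{cor_D_to_CC}, \ref{cor_D_to_S}, and Theorems~\ref{thm_ll}, \ref{thm_soule} in exactly the same pattern). Your write-up is in fact more careful than the paper's in two respects: you make explicit the reduction from D--schemes to toric varieties via the equivalence $\mathcal D$, and you isolate the genuine verification point---that each functor lands in the asserted subcategory (S--varieties rather than mere S--objects, CC--varieties with property~\textbf{S} rather than mere CC--gadgets)---which the paper leaves implicit in the earlier results.
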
}

\begin{proof}
 We label the subdiagrams as follows.
\[\xy (70,6)*+[F]{\quad}="B";
  (35,6)*+[F]{\quad}="C";
  (35,24)*+[F]{\quad}="D";
  (35,18)*+[F]{\quad}="E";
  (70,12)*+[F]{\quad}="G";
  (35,12)*+[F]{\quad}="I";  
  (0,6)*+[F]{\quad}="J";  
  {\ar@/_1.2pc/ "D"; "J"};
  {\ar@/^1pc/ "D"; "G"};
  {\ar@<1.1ex>^{\mathcal{D}} "E"; "D"};
		{\ar@<1.1ex>^{-\otf \Z} "D"; "E"};
		{\ar "E"; "I"};
  {\ar "I"; "C"};
  {\ar "I"; "G"};
  {\ar "I"; "J"};
  {\ar "G"; "B"};
  {\ar "G"; "C"};
  {\ar "J"; "C"};
  {\ar@/_1pc/ "J"; "B"};
  (20,16)*{\text{\textbf{A}}};
  (50,16)*{\text{\textbf{B}}};
  (29,9)*{\text{\textbf{C}}};
  (41,9)*{\text{\textbf{D}}};
  (53,6)*{\text{\textbf{E}}};                      \endxy\]
 The functors $\mathcal D$ and $-\otimes_\Fun\Z$ are mutually inverse by Theorem \ref{thm_deitmar}.
 Subdiagram A commutes (up to isomorphism, as all the commutations mentioned below) by Proposition \ref{prop_D_to_CC} and Corollary \ref{cor_D_to_CC}.
 Subdiagram B commutes by Proposition \ref{prop_D_to_S} and Corollary \ref{cor_D_to_S}.
 Subdiagram C commutes by Theorem \ref{thm_ll}.
 Subdiagram D commutes by Theorem \ref{thm_soule}.
 Subdiagram E commutes with the rest of the diagram by Theorem \ref{prop_CC_to_S}.
\end{proof}


\section{Concluding Remarks}

\subsection{On Chevalley schemes over $\F_1$}
 Among other reasons, Tits' suggestion
 of realizing Chevalley schemes as group objects over $\F_1$ (\cite[section 13]{Tits1957})
 was a main motivation in looking for concepts of geometries
 that have a base extension functor to $\Z$ and that somehow capture the aspects of usual geometry that can be ``expressed by roots of unity''.
 We discuss in various examples in how far Tits' suggestion becomes realized 
 by the different concepts of Connes-Consani, Soul\'e and Deitmar, respectively.
 
 To realize a Chevalley scheme $G$ as a group object in one of the discussed notions of geometries over $\F_1$ means that
 there is a CC--variety, an S--variety or a D--scheme $X$, respectively, representing $G$
 and a multiplication map $m: X\times X\to X$ such that 
 $X_\Z$ together with $m_\Z$ is an algebraic group isomorphic to $G$.
 In this case we say that $X$ together with $m$ is a group object over $\F_1$.
 
\begin{proposition}
 For every $n\geq0$, the Chevalley schemes $\mathbb G_m^n$ can be realized as group objects over $\F_1$ in all three notions of geometry over $\F_1$.
\end{proposition}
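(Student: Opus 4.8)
The plan is to treat the three notions in parallel. The torus $\Gm^n$ has three natural avatars feeding into the three formalisms developed above, and in each case the group multiplication of $\Gm^n$ is already a morphism in the corresponding ``$\Fun$-ready'' category; applying the realization functors $\mathcal L$, $\mathcal S$ and $\mathcal D$ then produces the desired group objects over $\Fun$.

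First I recall the three models. The torus $\Gm^n$ carries the obvious torification $T=\{\Gm^n\overset{\id}{\to}\Gm^n\}$; it is a smooth toric variety with fan $\Delta=\{0\}\subset\R^n$, so that $A_0=\Z^n$; and it is the base extension to $\Z$ of the D--scheme $\spec(\Z^n)$, the monoid $\Z^n$ being connected, integral, of finite type and of exponent $1$ (it is torsion-free). Consequently, Theorem~\ref{thm_ll} gives that $\mathcal L(\Gm^n,T)$ is a CC--variety with $\mathcal L(\Gm^n,T)\otf\Z\simeq\Gm^n$; Theorem~\ref{thm_soule} (equivalently, Theorem~\ref{thm_javier} together with Proposition~\ref{prop_S_and_S_tilde_are_isomorphic}) gives that $\mathcal S(\Gm^n)$ is an S--variety with $\mathcal S(\Gm^n)\otf\Z\simeq\Gm^n$; and Theorem~\ref{thm_deitmar} gives $\spec(\Z^n)\otf\Z\simeq\Gm^n$.

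Next I identify the multiplication. Writing $\Gm^n\times\Gm^n=\Gm^{2n}$ (with product torification $T\times T$, cf.\ Lemma~\ref{lemma:productoftorified}), the group multiplication $m\colon\Gm^{2n}\to\Gm^n$ is a homomorphism of split tori, hence simultaneously: an affinely torified morphism $(\Gm^{2n},T\times T)\to(\Gm^n,T)$, because homomorphisms of split tori respect the identity torifications and both schemes are affine (so one takes the trivial affine cover); a toric morphism for the fans $\{0\}$, since homomorphisms of tori are precisely the toric morphisms between tori; and the base extension of the D--morphism $\spec(\Z^n\times\Z^n)\to\spec(\Z^n)$ induced by the diagonal monoid homomorphism $\delta\colon\Z^n\to\Z^n\times\Z^n$, $a\mapsto(a,a)$, because $m^\#\colon\Z[\Z^n]\to\Z[\Z^n\times\Z^n]=\Z[\Z^n]\otimes\Z[\Z^n]$ is the comultiplication $[a]\mapsto[a]\otimes[a]$ of the group ring, i.e.\ $\Z[\delta]$. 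Applying $\mathcal L$, $\mathcal S$ and $\mathcal D$ respectively, one obtains morphisms $m_X\colon X\times X\to X$ in the three categories of varieties over $\Fun$, where $X$ denotes the corresponding model of $\Gm^n$. Here one uses that each functor sends $\Gm^n\times\Gm^n$ to the categorical product $X\times X$: for $\mathcal D$ this is the identity $\spec(\Z^n)\times_\Fun\spec(\Z^n)=\spec(\Z^n\times\Z^n)$ together with Theorem~\ref{thm_deitmar}, and for $\mathcal L$ and $\mathcal S$ it is immediate from the explicit descriptions of the functors (the functor of $\mathcal L(\Gm^n)$ is $D\mapsto D^n$ and its complex variety is $\mathbb G_{m,\C}^n$, and likewise for $\mathcal S(\Gm^n)$), so that $\mathcal L(\Gm^{2n})$ and $\mathcal S(\Gm^{2n})$ are the respective products of two copies of $\mathcal L(\Gm^n)$ and $\mathcal S(\Gm^n)$.

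Finally, the functoriality statements in Theorems~\ref{thm_ll}, \ref{thm_javier}/\ref{thm_soule} and \ref{thm_deitmar} --- that the composition of the respective $\Fun$--functor with $-\otf\Z$ is isomorphic to the forgetful functor to schemes --- yield $m_X\otf\Z=m$ under the identifications $X\otf\Z\simeq\Gm^n$ and $(X\times X)\otf\Z\simeq\Gm^n\times\Gm^n$. Hence $X_\Z$ together with $(m_X)_\Z$ is the algebraic group $\Gm^n$, which is exactly the requirement for $(X,m_X)$ to be a group object over $\Fun$; note that no unit or inverse morphism needs to be transported, since the definition adopted here only asks for a multiplication map whose base extension recovers $G$. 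The main point requiring care is the verification that the three functors preserve the product $\Gm^n\times\Gm^n$, so that $m_X$ really has $X\times X$ as source; this is routine from the explicit formulas, but it is the step where the three formalisms must be matched separately.
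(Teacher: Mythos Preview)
Your proof is correct and follows essentially the same approach as the paper: the key observation is that the multiplication $\Gm^n\times\Gm^n\to\Gm^n$ is a toric morphism (hence also affinely torified and a D--morphism), and then one applies Theorems~\ref{thm_ll}, \ref{thm_javier} and~\ref{thm_deitmar} to the three realization functors. You are more careful than the paper in verifying that each functor sends $\Gm^n\times\Gm^n$ to the categorical product of two copies of the model of $\Gm^n$, a point the paper leaves implicit.
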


\begin{proof}
 The crucial observation is that the multiplication $\mathbb G_m^n\times\mathbb G_m^n\to\mathbb G_m^n$ is a toric morphism.
 With this, Theorem \ref{thm_ll} implies that $\mathcal L(\mathbb G_m^n)$ together with $\mathcal L(m)$ is a group object over $\F_1$.
 Theorem \ref{thm_javier} implies that $\mathcal S(\mathbb G_m^n)$ together with $\mathcal S(m)$ is a group object over $\F_1$.
 Theorem \ref{thm_deitmar} implies that $\mathcal D(\mathbb G_m^n)$ together with $\mathcal D(m)$ is a group object over $\F_1$.
\end{proof}

\begin{proposition}
 \label{prop_no_additive_group_over_F_1}
 For every $n>0$, the algebraic group $\mathbb G_a^n$ cannot be realized as group object in any of the three notions of geometries over $\F_1$.
\end{proposition}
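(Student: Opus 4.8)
The plan is to handle the three notions one at a time; in each case the heart of the matter is that morphisms in the category in question become, after base extension to $\Z$, far too ``multiplicative'' to be the addition law of $\mathbb G_a^n$, while the few degenerate possibilities not excluded that way are killed by the finiteness or counting constraints built into the definitions. So fix a hypothetical group object $(X,m)$ over $\F_1$ (in one of the three notions) with $X_\Z\cong\mathbb G_a^n=\A^n$, $n\ge1$, and $m_\Z$ the addition. Writing $\A^n=\Spec\Z[t_1,\dots,t_n]$ and $\A^n\times_\Z\A^n=\Spec\Z[x_1,\dots,x_n,y_1,\dots,y_n]$, we have $(m_\Z)^\#(t_i)=x_i+y_i$, which is not a monomial.

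\emph{Deitmar's geometry.} First I would check that $X$ is a connected integral D--scheme of finite type and of exponent $1$. Connectedness passes to $X$ because the canonical map $(X_\Z)^\top\to X^\top$ is a continuous surjection; integrality and finite type hold because, for an affine open $\spec A\subseteq X$, the ring $\Z[A]=\O_{X_\Z}(U)$ is the coordinate ring of an affine open $U\subseteq\A^n$, hence a finitely generated domain; and exponent $1$ holds because a torsion element of such a monoid $A$ would be a root of unity in the domain $\Z[A]$, hence $\pm1$, while $-1$ is never a monoid basis element of $\Z[A]$ and $1$ itself is the only basis element equal to $1$. By Theorem~\ref{thm_deitmar}(\ref{deitmar1}) this forces $X\cong\mathcal D(\A^n)=\spec\N^n$ and $X\times X\cong\spec\N^{2n}$. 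Now a morphism of affine D--schemes $\spec\N^{2n}\to\spec\N^n$ is induced by a monoid homomorphism $\N^n\to\N^{2n}$, so its base extension sends each $t_i$ to a single monomial in the $x_j,y_k$ -- contradicting $(m_\Z)^\#(t_i)=x_i+y_i$.

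\emph{Connes--Consani's geometry.} A CC--variety is finite and, having an open affine cover by affine CC--varieties, its functor represents the counting function of $X_\Z$; moreover the immersions $U_i\hookrightarrow\mathcal G(U_{i,\Z})$ of the affine pieces glue to make each $\ev_X(D)\colon\uline X(D)\to X_\C(\C[D])=\C[D]^n$ injective. Since $m$ is a morphism of CC--varieties, $m_\C$ is compatible with $m_\Z$ and hence is the addition $\A^n_\C\times\A^n_\C\to\A^n_\C$; using the standard description of the product CC--gadget (so $\uline{X\times X}(D)=\uline X(D)\times\uline X(D)$ and $\ev_{X\times X}=\ev_X\times\ev_X$), the compatibility square of the gadget morphism $m$ gives $\ev_X(D)\bigl(\uline m(D)(u,v)\bigr)=\ev_X(D)(u)+\ev_X(D)(v)$ for all $u,v\in\uline X(D)$. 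Thus the image of $\ev_X(D)$ is a finite subsemigroup of the torsion-free abelian group $(\C[D]^n,+)$, so it is $\{0\}$, so $\#\uline X(D)=1$ by injectivity. Taking $D=\{e\}$ (so $q=2$) this contradicts $\#\uline X(\{e\})=\#\A^n(\F_2)=2^n\ge2$.

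\emph{Soul\'e's geometry.} Since $\A^n$ is affine, $X$ is (up to isomorphism of S--objects) an affine S--variety $X_0$ with $(X_0)_\Z\cong\A^n$, and $m$ is a morphism $m_0\colon X_0\times X_0\to X_0$ of affine S--varieties with $(m_0)_\Z$ the addition. The immersion $i_{X_0}\colon X_0\hookrightarrow\mathcal T(\A^n)$ realizes $\uline{X_0}(R)$ as a subset of $\A^n(R)=R^n$, and -- because $m_0^\#$ agrees with $(m_0)_\Z^\#$ on the image of $\C[t_1,\dots,t_n]$ under $i_{X_0}^\#$ -- the same computation as above, now performed in $R\otimes_\Z\C$, shows that $\uline{X_0}(R)\subseteq R^n$ is closed under coordinatewise addition. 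As $R\in\mathcal R$ is flat over $\Z$, the group $R^n$ is torsion-free, so the finite subsemigroup $\uline{X_0}(R)$ is empty or $\{0\}$; hence $\#\uline{X_0}(R)\le1$ for all $R\in\mathcal R$. The remaining task -- and this is the step I expect to be the real obstacle -- is to rule out that an affine S--variety with base extension $\A^n$, $n\ge1$, can have such a small functor. The plan is to exploit the universal property: once $\uline{X_0}$ is at most a point, the complex-algebra component of a morphism $X_0\to\mathcal T(\A^1)$ is almost unconstrained, so one can produce such a morphism whose complex-algebra component fails to be defined over $\Z$ (e.g.\ $s\mapsto\pi\,i_{X_0}^\#(t_1)+c$ for a suitable integer $c$, once one verifies that a compatible functor component exists); by the universal property this morphism would have to equal $\mathcal T(\vphi_\Z)\circ i_{X_0}$ for a scheme morphism $\vphi_\Z\colon\A^n\to\A^1$, which is absurd. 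Carefully establishing the existence of such ``transcendental'' S--gadget morphisms, and treating the degenerate case where $\uline{X_0}(R)$ is empty for some $R$, is where the work concentrates; the Deitmar and Connes--Consani cases, by contrast, are short once Theorem~\ref{thm_deitmar} and the counting property of CC--varieties are available.
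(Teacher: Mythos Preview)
Your argument is correct in all three cases, and for the Deitmar and Soul\'e parts it is essentially the paper's argument (the paper dispatches Soul\'e with ``a similar argument'' and Deitmar in one line via Theorem~\ref{thm_deitmar}; your extra care in verifying that $X$ lies in the right subcategory of D--schemes is welcome but not strictly needed, since the statement already presupposes this).

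The genuine difference is in the Connes--Consani case. The paper does \emph{not} use the counting condition $\#\uline X(D)=\#X_\Z(\F_q)$. Instead it argues in two steps: (i) if the image of $\ev_X(D)$ were a single point for every $D$, one could compose with a complex automorphism of $\A^n_\C$ fixing that point but not defined over $\Z$, contradicting the universal property of the CC--variety; (ii) otherwise some image has at least two elements, and the compatibility square makes that image a nontrivial finite subgroup of the torsion-free group $\C[D]^n$, which is impossible. Your route replaces step~(i) entirely: you use injectivity of $\ev_X$ to conclude $\#\uline X(D)=1$ from the fact that the image is $\{0\}$, and then invoke the counting condition at $D=\{e\}$ to get $1\neq 2^n$. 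This is shorter and avoids the ``transcendental automorphism'' construction, but it only works because the paper's restricted notion of CC--variety builds the counting condition into the definition; the paper's argument, by contrast, applies to the broader Connes--Consani notion of variety over $\F_1$ without that extra hypothesis. Note also that your injectivity-of-$\ev_X$ claim, while correct, deserves the justification you sketch: it comes from the immersion $i\colon X\hookrightarrow\mathcal G(X_\Z)$ together with the injectivity of $\Z[D]^n\hookrightarrow\C[D]^n$.

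Interestingly, in the Soul\'e case you are forced back onto the paper's style of argument, since there is no counting axiom available; the ``transcendental morphism'' step you flag as the real obstacle is exactly the analogue of the paper's step~(i), and your sketch of it is on target.
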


\begin{proof}
 First, we consider Connes-Consani's concept. 
 Assume there was a group object $X=(\uline X,X_\C,\ev_X)$ with multiplication $m$ representing $\mathbb G_a^n$.
 We first want to exclude the possibility that the image of $\ev_X(D):\uline X(D)\to\Hom(\Spec\C[D],\mathbb G_a^n)$ 
 consists of only one element for all finite abelian groups $D$.
 If this was the case, then the image of $\ev_X(D)$ 
 would consist of the same point $x\in\mathbb G_a^n(\C)$ for all finite abelian groups $D$ by the functoriality of $\uline X$.
 But then the composition $\iota\circ\vphi$ of an automorphism $\vphi:X\to X$ given by a morphism $X_\C\to X_\C$ that leaves $x$ fixed
 but is not defined over $\Z$ followed by the canonical immersion $\iota:X\to\mathcal G(\mathbb G_a^n)$ would be a morphism of CC--gadgets
 that does not base extend to $\Z$. 

 Thus assume that $D$ is a group such that the image of $\ev_X(D)$ has more than one element.
 Then the commutative diagram
 $$ \xymatrix{\uline X(D)\times \uline X(D) \ar[rrr]^{m(D)} \ar[d]_{\ev_X(D)\times\ev_X(D)} &&&\uline X(D)\ar[d]^{\ev_X(D)}\\ 
               \mathbb G_a^n(\C[D])\times \mathbb G_a^n(\C[D])\ar[rrr]^{m_\C(\C[D])}&&& \mathbb G_a^n(\C[D])} $$
 would establish the image of $\ev_X(D)$ as a non-trivial finite subgroup of the torsion free group $\A^n(\C[D])\simeq\C^{nd}$ where $d=\# D$,
 which does not exist.
 Thus we showed that $X$ and $m$ as assumed cannot exist.
 
 A similar argument shows that $\mathbb G_a^n$ cannot be realized in Soul\'e's geometry over $\F_1$.

 Since, up to isomorphism, the only D--scheme representing $\A^n$ is $Y=\mathcal D(\mathbb G_a^n)$,
 the existence of a multiplication of $Y$ would imply by Theorem \ref{thm_deitmar}, that the multiplication
 of $\mathbb G_a^n$ is a toric morphism, which is not the case.
\end{proof}

\subsubsection{Chevalley groups as CC--varieties}
 \label{remark_CC_result_on_group}
 In their paper \cite{Connes2008}, Connes and Consani show that a split Chevalley scheme $G$ over $\Z$ is ``a variety over $\F_{1^2}$'' 
 (\cite[Thm.\ 4.10]{Connes2008}) and they remark that the normalizer $N$ of a maximal split torus $T$ in $G$ is a group object over $\F_{1^2}$,
 but that the multiplication of $G$ is ``more mysterious'' (ibid.\ 25).
 The following example shows that neither the multiplication of $G$ nor the multiplication of $N$ has to be defined over $\F_1$.

 Let $G=\Sl(2)$. 
 Let $T$ be the diagonal torus, $N$ its normalizer in $G$ and $B$ the subgroup of upper triangular matrices.
 We saw in Example \ref{example:Sl(2)} that we have torifications
 $$ N \ = \ 2\Gm \quad \quad \subset \quad \quad G \ = \ 2\Gm \ \du \ 3\Gm^2 \ \du \ \Gm^3 \;. $$
 Write $S$ for the torification of $G$ and by $S'$ the restriction of $S$ to $N$.
 Let $X=(\uline X,X_\C,\ev_X)$ be $\mathcal L(G,S)$ and let $Y=(\uline Y,Y_\C,\ev_Y)$ be $\mathcal L(N,{S'})$.
 Then 
 $$ \uline Y(D) \ = \ 2D \quad \quad \subset \quad \quad \uline X(D) \ = \ 2D \ \du \ 3D^2 \ \du \ D^3 $$   
 for a finite abelian group $D$. 
 Note that a multiplication of $X$ restricts to a multiplication of $Y$,
 and thus we only have to show the non-existence of a multiplication for $Y$.
 Assume there is  a multiplication $m: Y\times Y\to Y$, then for the trivial group $D=\{0\}$, 
 we can identify $\uline Y(\{0\})$ with $W$, and $\ev(\{0\}): W\to Y_\C(\C)=N(\C)$ defines a section to 
 $$ \xymatrix{1\ar[r]&T(\C)\ar[r]&N(\C)\ar[r]&W\ar[r]&1} \;. $$ 
 Moreover, the commutative diagram
 $$ \xymatrix{W\times W \ar[rr]^{m(\{0\})} \ar[d]_{\ev_Y(\{0\})\times\ev_Y(\{0\})} &&W\ar[d]^{\ev_Y(\{0\})}\\ N(\C)\times N(\C)\ar[rr]^{m_\C}&&N(\C)} $$
 that we obtain from the definition of a morphism between CC--gadgets 
 implies that the section $W\to N(\C)$ must be a group homomorphism. But this is not possible in the case of $\Sl(2)$.

\subsubsection{Chevalley groups as S--varieties}
 \label{remark_S_result_on_group}
 The situation in Soul\'e's geometry behaves similarly except for one remarkable difference.
 Since all rings $R\in\mathcal R$ are by definition flat over $\Z$, their additive groups are torsionfree
 and the group morphism $\mu(\Z)\to\mu(R)$ is thus injective. This means that $\mu(R)$ has a distinguished element of order $2$,
 namely, the image of $-1\in\mu(\Z)$.
 This allows us to transfer the idea of Connes-Consani, 
 which is to consider Chevalley schemes over $\F_{1^2}$ (see previous remark and \cite[section 4]{Connes2008}),
 to show that the normalizer $N$ of a maximal split torus $T$
 in a split Chevalley scheme $G$ is a group object in Soul\'e's notion of a geometry over $\F_1$.

 But there is no larger subgroup of $G$ than $N$ that can be realized as a group object in Soul\'e's geometry
 since this would involve additive structure.
 The argument of Proposition \ref{prop_no_additive_group_over_F_1} shows that this is not possible
 as it is not in the situation of Connes-Consani's paper (loc. cit.).

\begin{remark}
 A possible way out of the dilemma could be to broaden the notion of a morphism in Connes-Consani's or Soul\'e's geometry over $\F_1$.
 This could possibly be done by a motivic theory over $\F_1$ as already motivated in \cite{Manin1995}.
\end{remark}
 
\subsubsection{Chevalley groups as D--schemes}
 \label{remark_D_result_on_group}
 A Chevalley scheme can be realized in Deitmar's notion of a geometry over $\F_1$ 
 if and only if the Chevalley scheme is a toric variety and the multiplication is a toric morphism.
 This class of Chevalley schemes is precisely the class of split tori.

\subsection{Odds and ends}
 As we have noted in Remarks \ref{remark_torified_gadgets} and \ref{remark_torified_objects},
 different (affine) torification can lead to non-isomorphic CC--gadgets or S--objects, respectively.
 One may put the question: 
 shall it be an essential feature of a geometry over $\F_1$ to obtain different forms of a torified variety by choosing different torifications?
 There are two possible approaches to avoid the ambiguity of a torification:
 weakening the notion of morphism to gain isomorphic CC--varieties by different choices of torifications or
 using the following notion. 
 We call a decomposition $X=\decomp_{i\in I} Y_i$ \dtext{regular} if for every $i\in I$ there exists $J_i\subseteq I$ 
 such that $\overline{Y_i}=\decomp_{j\in J_i} Y_j$. 
 In other words, the Zariski closure of each of the schemes in the decomposition decomposes through the same decomposition.  
 Whenever a torified variety $X$ has a regular torification and any two regular torifications lead to isomorphic CC--varieties, 
 then one can declare the corresponding isomorphism class of CC--varieties as the \emph{canonical model of $X$ over $\F_1$}.
 Note that split tori, affine space, projective space and flag varieties have a unique isomorphism class of regular torifications.
 We do not know whether this is the case for all torified varieties.
 
 A second matter is the problem of the realization of the Grassmannian $\Gr(2,4)$ over $\F_1$ as posed by Soul\'e (\cite[Question 3]{Soule2004}),
 which stays open. It is not at all clear to us what this should be in Soul\'e's geometry over $\F_1$.
 Concerning Connes-Consani's notion, we present in this paper the candidate $\mathcal L(\Gr(2,4),T)$, 
 where $T$ is a torification given by a Schubert cell decomposition.
 Since, however, $T$ is not an affine torification, this CC--gadget fails to be a CC--variety.
 A possible solution could be searched in relaxing the notion of a CC--variety in an appropriate way.
 
 Note that the idea of establishing affinely torified varieties $(X,T)$ as varieties over $\F_1$ is quite flexible.
 We showed that it works in both Soul\'e's definition and Connes-Consani's definition.
 It further works with the modifications recently suggested by Connes and Consani in the end of their paper \cite{Connes2008}:
 There is a natural extension of the functors from finite abelian groups to monoids with distinguished elements $0$ and $1$
 since the CC--gadgets of torified varieties is defined in terms of homomorphism sets $\Hom(A_i,-)$, where the $A_i$ are free abelian groups.
 First note that it is not essential for our construction that we restrict $\uline X$ to finite abelian groups, but we can allow arbitrary abelian groups.
 Secondly, every homomorphism from a group into a monoid factorizes through the group of invertible elements of the monoid.
 Further, one might exchange the complex variety by a functor on rings that yields a reduced scheme of finite type over any ring.
 Namely, the result \cite[Thm.\ 5.1]{Connes2008} holds true for affinely torified varieties due to Lemma \ref{lemma:decomp}:
 there is a natural definition of evaluations $\ev_{X,A}: \uline X \Rightarrow X_A(A[-])$ for every ring $A$ and $X_A=X\otimes_\Z A$.
 If $A$ is a field and $M$ its multiplicative monoid, then 
 $$ \uline X(M) \ \stackrel{\ev_{X,A}(M)}\longrightarrow \ X_A(A[M]) \ \longrightarrow X_A(A) $$ 
 is a bijection, where the latter morphism is induced by the $A$-linear map $A[M]\to A$ identifying $M$ with $A$.



\begin{thebibliography}{12}

\bibitem{Borel1991}
A.~Borel.
\newblock {\em Linear algebraic groups} (Second edition).
\newblock  Graduate Texts in Mathematics, 126. Springer-Verlag, 1991.

\bibitem{Connes2008a}
A.~Connes, C.~Consani, M.~Marcolli.
\newblock {\em Fun with $\Fun$}.
\newblock To appear on J. Number Thy., 2008.


\bibitem{Connes2008}
A.~Connes, C.~Consani.
\newblock {\em On the notion of geometry over $\Fun$}.
\newblock arXiv: 0809.2926v2 [math.AG], 2009.

\bibitem{Connes2009}
A.~Connes, C.~Consani. 
\newblock {\em Schemes over $\F_1$ and zeta functions}.
\newblock arXiv:0903.2024v2 [math.AG], 2009.

\bibitem{Deitmar2005}
A.~Deitmar.
\newblock {\em Schemes over {$\mathbb F\sb 1$}}.
\newblock Number fields and function fields---two parallel worlds, Progr. Math., vol. 239, 2005.

\bibitem{Deitmar2006}
A.~Deitmar.
\newblock {\em Remarks on zeta functions and {$K$}-theory over {${\bf F}\sb 1$}}.
\newblock Proc. Japan Acad. Ser. A Math. Sci., vol. 82, 2006.

\bibitem{Deitmar2007}
A.~Deitmar.
\newblock {\em {${\bf F}\sb 1$}-schemes and toric varieties.}.
\newblock Contributions to Algebra and Geometry Vol. 49, No. 2 (2008), pp. 517-525.

\bibitem{Demazure1970}
M.~Demazure.
\newblock {\em Sous-groupes alg\'{e}briques de rang maximum du groupe de Cremona}.
\newblock Ann. Sci. \'{E}cole Norm. Sup. (4) 3 (1970), 507--588. 

\bibitem{Durov2007}
N.~Durov.
\newblock {\em A New Approach to Arakelov Geometry}.
\newblock arXiv: 0704.2030v1 [math.AG], 2007.

\bibitem{SGA3}
M. Demazure, A. Grothendieck.
\newblock {\em S\'eminaire de  G\'eom\'etrie Alg\'ebrique, Sch\'emas en Groupes}. 
\newblock Lecture Notes in Mathematics, 153. Springer-Verlag, 1962/64.

\bibitem{Fulton1993}
W.~Fulton.
\newblock {\em Introduction to toric varieties}. 
\newblock Princeton University Press, 1993.

\bibitem{Fulton1997}
W.~Fulton.
\newblock {\em Young Tableaux, with Applications to Representation Theory and Geometry}. 
\newblock Cambridge University Press, 1997.

\bibitem{Fulton1998}
W.~Fulton.
\newblock {\em Intersection theory}. 
\newblock Ergebnisse der Mathematik und ihrer Grenzgebiete. 3. Folge. A Series of Modern Surveys in Mathematics. Springer-Verlag, 1998.


\bibitem{Griffiths1978}
P.~Griffiths, J.~Harris.
\newblock{\em Principles of algebraic geometry}.
\newblock Pure and Applied Mathematics. Wiley-Interscience, 1978.

\bibitem{EGAI}
A.~Grothendieck, J. A.~Dieudonn\'e.
\newblock{\em El\'ements de G\'eom\'etrie Alg\'ebrique}.
\newblock Die Grundlehren der mathematischen Wissenschaften in Einzeldarstellungen, Band 166. Springer-Verlag, 1971.


\bibitem{KapranovUN}
M.~Kapranov, A.~Smirnov,
\newblock {\em Cohomology determinants and reciprocity laws: number field case}.
\newblock  Unpublished preprint.

\bibitem{Kurokawa2005}
N.~Kurokawa
\newblock{Zeta functions over ${\mathbb F}\sb 1$}.
\newblock Proc. Japan Acad. Ser. A Math. Sci. 81 (2005), no. 10, 180--184. 

\bibitem{Maillot2000}
V.~Maillot.
\newblock {\em G\'eom\'etrie d'Arakelov des vari\'et\'es toriques et fibr\'es en droites int\'egrables}.
\newblock M\'{e}m. Soc. Math. Fr. (N.S.) No. 80, 2000.

\bibitem{Manin1995}
Y.~Manin.
\newblock {\em Lectures on zeta functions and motives (according to {D}eninger and {K}urokawa)}.
\newblock  Ast\'{e}risque  No. 228  (1995), 4, 121--163. 

\bibitem{Manin2008}
Y.~Manin.
\newblock {\em Cyclotomy and analytic geometry over $\Fun$}.
\newblock arXiv:0809.1564v1 [math.AG], 2008.

\bibitem{Marcolli2009}
M.~Marcolli.
\newblock {\em Cyclotomy and endomotives}.
\newblock arXiv: 0901.3167v1 [math.QA], 2009.


\bibitem{ShaiHaran2007}
M. J.~Shai Haran.
\newblock {\em Non-additive geometry}.
\newblock Compositio Math. Vol.143 (2007) 618--688.

\bibitem{Soule2004}
C.~Soul\'{e}.
\newblock {\em Les vari\'{e}t\'{e}s sur le corps \`{a} un \'{e}l\'{e}ment}.
\newblock Mosc. Math. J. 4 (2004), 217--244.

\bibitem{Tits1957}
J.~Tits.
\newblock {\em Sur les analogues alg\'{e}briques des groupes semi-simples complexes}.
\newblock Colloque d'alg\`{e}bre sup\'{e}rieure, tenu \`{a} Bruxelles du 19 au 22 d\'{e}cembre 1956 (1957), 261--289. 

\bibitem{Toen2008}
B.~To\"en and M.~Vaqui\'e.
\newblock {\em Au-dessous de $Spec\; \mathbb{Z}$}.
\newblock Journal of K-Theory (2008) 1--64.


\bibitem{Waterhouse}
W. C.~Watherhouse.
\newblock {\em Introduction to affine group schemes}.
\newblock Graduate Texts in Mathematics, 66. Springer-Verlag, 1979. 

\end{thebibliography}
\end{document}